\newtheorem{defn}{Definition}[section]
\newtheorem{lem}[defn]{Lemma}
\newtheorem{thm}[defn]{Theorem}
\newtheorem{cor}[defn]{Corollary}
\newtheorem{ex}[defn]{Example}
\newtheorem{prop}[defn]{Proposition}
\newtheorem{rem}[defn]{Remark}
\newtheorem{rems}[defn]{Remarks}
\def\r{\mathcal{R}}
\def\p{\mathcal{P}}
\def\i{\mathcal{I}}
\newcommand{\en}{E\ensuremath{{_0}}}
\title[Invariants for \en-semigroups on II$_1$ factors]{Invariants for \en-semigroups on II$_1$ factors}
\author[O. Margetts]{Oliver T. Margetts}
\address{Department of Mathematics and Statistics,
 Fylde College,
Lancaster University, Lancaster LA1 4YF, U.K.}
\email{o.margetts@lancaster.ac.uk}
\author[R. Srinivasan]{R. Srinivasan}
\address{Chennai Mathematical Institute, H1, SIPCOT IT Park, Kelambakkam, Siruseri 603103, India.}
\email{vasanth@cmi.ac.in}
\subjclass[2010]{Primary  46L55; Secondary 46L40, 46L53, 46C99}
 \keywords{*-endomorphisms, E$_0$-semigroups, II$_1$ factors, noncommutative probability, white noise, super product systems}
\begin{document}

%%%%% NEW COMMANDS %%%%%

%standard maths
\newcommand{\norm}[1]{\ensuremath{\left\|#1\right\|}}
\newcommand{\ip}[1]{\ensuremath{\left\langle#1\right\rangle}}
\newcommand{\dist}{\hbox{dist}}
\newcommand{\bra}[1]{\ensuremath{\left\langle#1\right|}}
\newcommand{\ket}[1]{\ensuremath{\left|#1\right\rangle}}
\newcommand{\lin}{\ensuremath{\mathrm{Span}}}
\renewcommand{\ker}{\ensuremath{\mathrm{ker}}}
\newcommand{\ran}{\ensuremath{\mathrm{Ran}}}
\newcommand{\dom}{\ensuremath{\mathrm{Dom}}}
\newcommand{\supp}{\ensuremath{\mathrm{supp}}}
\newcommand{\id}{\hbox{id}}

%tensor products
\newcommand{\overtimes}{\ensuremath{\overline{\otimes}}}
\newcommand{\undertimes}{\ensuremath{\underline{\otimes}}}
\newcommand{\opower}[1]{\ensuremath{^{\otimes #1}}}

%special letters
\newcommand{\N}{\ensuremath{\mathbb{N}}} %naturals
\newcommand{\Z}{\ensuremath{\mathbb{Z}}} %integers
\newcommand{\Q}{\ensuremath{\mathbb{Q}}} %rationals
\newcommand{\R}{\ensuremath{\mathbb{R}}} %reals
\newcommand{\C}{\ensuremath{\mathbb{C}}} %complex
\newcommand{\F}{\ensuremath{\mathcal{F}}} %Fock space
\newcommand{\B}{\ensuremath{\mathcal{B}}}
\newcommand{\D}{\ensuremath{\mathcal{D}}} %a domain
\newcommand{\E}{\ensuremath{\mathcal{E}}} %exponential vectors
\newcommand{\W}{\ensuremath{\mathcal{W}}}
\newcommand{\V}{\ensuremath{\mathcal{V}}}
\renewcommand{\H}{\ensuremath{\mathcal{H}}}
\newcommand{\K}{\ensuremath{\mathsf{K}}}
\newcommand{\h}{\ensuremath{\mathrm{h}}}
\renewcommand{\k}{\ensuremath{\mathrm{k}}}
\newcommand{\J}{\ensuremath{\mathscr{J}}}
\newcommand{\A}{\ensuremath{\mathcal{A}}}
\renewcommand{\L}{\ensuremath{\mathcal{L}}}
\newcommand{\n}{\ensuremath{\mathrm{N}}} %von Neumann algebra
\newcommand{\m}{\ensuremath{\mathrm{M}}} %von Neumann algebra

%E-semigroups
\newcommand{\munit}{\ensuremath{\mu\hbox{nit~}}}
\newcommand{\munits}{\ensuremath{\mu\hbox{nits~}}}
\newcommand{\unitset}[1]{\ensuremath{\mathcal{U}_{#1}}}
\newcommand{\unitspace}[1]{\ensuremath{H(\mathcal{U}_{#1})}}
\newcommand{\munitset}[1]{\ensuremath{\mathcal{U}_{#1,#1'}}}
\newcommand{\munitspace}[1]{\ensuremath{H(\mathcal{U}_{#1,#1'})}}
\newcommand{\gaugespace}[1]{\ensuremath{H(G(#1))}}
\newcommand{\ind}{\ensuremath{\mathrm{Ind}}}

%Stochastic calculus
\newcommand{\expectation}{\ensuremath{\mathbb{E}}}
\newcommand{\Exp}{\ensuremath{\mathrm{Exp}}}
\newcommand{\Log}{\ensuremath{\mathrm{Log}}}

%Operator algebras
\newcommand{\alg}{\ensuremath{\mathrm{A}}}
\newcommand{\factor}{\ensuremath{\mathrm{M}}}
\newcommand{\oneinfty}{I$_\infty~$}
\newcommand{\twoone}{II$_1~$}
\newcommand{\twoinfty}{II$_\infty~$}
\newcommand{\three}[1]{III$_{#1}~$}
\newcommand{\hyperfinite}{\ensuremath{\mathcal{R}}}
\newcommand{\semiflowalg}{\ensuremath{\mathcal{A}}}

\newenvironment{makered}{\color{red}}{}

 \dedicatory{We humbly dedicate this paper to the memory of Bill Arveson.}

  \begin{abstract}
 We introduce four new cocycle conjugacy invariants for $E_0$-semigroups on II$_1$ factors: a coupling index, a dimension for the gauge group, a \emph{super product system} and a $C^*$-semiflow. Using noncommutative It\^o integrals we show that the dimension of the gauge group can be computed from the structure of the \emph{additive cocycles}. We do this for the Clifford flows and even Clifford flows on the hyperfinite \twoone factor, and for the free flows on the free group factor $L(F_\infty)$. In all cases the index is $0$, which implies they have trivial gauge groups. We compute the super product systems for these families and, using this, we show they have trivial coupling index. Finally, using the $C^*$-semiflow and the boundary representation of Powers and Alevras, we show that the families of Clifford flows and even Clifford flows contain infinitely many mutually non-cocycle-conjugate \en-semigroups.
 \end{abstract}

 \maketitle

\section{Introduction}
  A weak-* continuous semigroup of unital $*$-endomorphisms on a von Neumann algebra is called an \en-semigroup. They arise naturally in the study of open quantum systems (\cite{ccr}, \cite{dilations}), the theory of interactions (\cite{absorbing states}, \cite{interactions}, \cite{arveson}), and in algebraic quantum field theory (simply restrict the time evolution to an algebra of observables corresponding to the future light cone).
  For \en-semigroups on type I factors the subject has grown rapidly since its inception in \cite{powers} (see the monograph \cite{arveson} for extensive references). Arveson showed that these \en-semigroups are completely classified by continuous tensor products of Hilbert spaces, called \emph{product systems}, and this gives a rough division into ``types'' I, II and III. The type I \en-semigroups on type I factors are just the CCR flows (\cite{arveson}), but there are uncountably many exotic product systems of types II and III (\cite{genccr}, \cite{toepcar} \cite{lieb}, \cite{tsirel}).
  
  By contrast, after their study was initiated by Powers (in the same 1988 paper \cite{powers}), there has been little progress regarding \en-semigroups on type II$_1$ factors. 
  In \cite{alev} Alexis Alevras made developments for \en-semigroups on \twoone factors analogous to the theory on type I factors. He associated a product system of Hilbert modules to every \en-semigroup and showed that they form a complete invariant (product systems of Hilbert modules have also been considered in \cite{tips}, \cite{dilations}, but they are slightly different in form and function to the ones considered here). He also introduced an index using Powers' boundary representation (\cite{powers}) and computed the index for several important cases.  
  
  Still, this does not classify even the simplest examples of \en-semigroups on the hyperfinite II$_1$ factor. One problem is that Alevras was unable to show his index is an invariant up to cocycle conjugacy (see Section 2). For type I factors, Powers showed his index is a cocycle conjugacy invariant by proving it equals the Arveson index, an intrinsic property of the product system (\cite{powersrob}, \cite{powersprice}). For \twoone factors there is no known connection between the index of the semigroup and the product system of Hilbert modules. Directly related to this is a lack of effective invariants for these objects.
    
    The structure of the paper is as follows. In Section 2 we give the basic definitions of \en-semigroups, cocycle conjugacy, units and the gauge group. We introduce three important families of examples: Clifford flows and even Clifford flows on the hyperfinite \twoone factor, and free flows on the free group factor $L(F_\infty)$. All three are, in a generalized sense, second quantizations of unilateral shifts on $L^2$ spaces.
  
  In Section 3, for an \en-semigroup $\alpha$ on $\m$, we use use the antilinear *-isomorphism $j=Ad(J):\m\to\m'$ of Tomita-Takesaki theory to define a complementary \en-semigroup $\alpha'$ on $\m'$. We associate a Hilbert space to the pair $(\alpha,\alpha')$ and the dimension of this Hilbert space is a cocycle conjugacy invariant, called the coupling index. If there exists an \en-semigroup $\sigma$ on $B(H)$ extending both $\alpha$ and $\alpha'$ then the coupling index is equal to the usual Powers-Arveson index of $\sigma$. It should be noted that this index is not same as the index defined by Alevras in \cite{alev} (they differ, for instance, on Clifford flows and even Clifford flows). We finish the section by associating a Hilbert space $H(G(\alpha))$ to the gauge group $G(\alpha)$ and showing the dimension of this Hilbert space is another cocycle conjugacy invariant.
  
  In section 4 we define \emph{super product systems}, generalisations of Arveson's product systems of Hilbert spaces. These are similar in spirit to the subproduct systems already studied in \cite{BM}, \cite{OB} and play a prominent role in the paper. We introduce multiplicative and additive units for super product systems and, in section 5, we prove that there is a one to one correspondence between the two. To this end we develop a noncommutative stochastic calculus for super product systems, very similar to that of \cite{bernoulli}. This gives an explicit formula for computing the index of a super product system from its additive units. 
  
  In section 6 we study the gauge group from the point of view of noncommutative probability. It follows from the invariance of the trace that an element $(U_t)_{t\geq0}$ of the gauge group enjoys the future independence property $\tau(U_t\alpha_t(x))=\tau(U_t)\tau(\alpha_t(x))$ for all $x\in\factor$, $t\geq0$, hence $G(\alpha)$ generates a noncommutative white noise, similar to those of \cite{kostler}, \cite{bernoulli}. By recasting the white noise as a super product system we see the dimension of the gauge group can be computed using the methods of section 5. Furthermore, $\dim H(G(\alpha))$ is zero precisely when the gauge group is isomorphic to $(\R,+)$.

  In section 7 we compute the additive cocycles for the Clifford flows, even Clifford flows, and free flows explicitly. Using the results of section 6 we show that the gauge group is trivial for all these examples. By a result of Arveson, this shows that a one-parameter group of automorphisms on $B(H)$ extending one of these semigroups is completely determined by its ``past'' and ``future'' \en-semigroups on $\m'$, respectively $\m$ (see \cite{interactions}, \cite{arveson}).

  The pair $(\alpha,\alpha')$ gives us a pair of product systems of Hilbert modules. In Section 8, we show their intersection is a super product system. We prove this is an invariant for $\alpha$ and compute it for Clifford flows and even Clifford flows. This has strong structural implications. For instance, it precludes the existence of certain extensions of the Clifford, or even Clifford flows, to $B(H)$ and allows us show the coupling index is $0$ for all the Clifford flows and even Clifford flows. For free flows we show that the super product system is trivial and hence free flows also have coupling index $0$.
  
  In section 9 we introduce the notion of a $\tau$-semiflow, the natural counterpart to the $C^*$-semiflows introduced by Floricel for type I factors (\cite{remus}). We show that the $\tau$-semiflow is a cocycle conjugacy invariant and use this to show that, for a certain class of \en-semigroups, cocycle conjugacy is equivalent to conjugacy. Using this, together with the computation of Powers-Alevras index for even Clifford flows, we are able to prove that the even Clifford flows are not cocycle conjugate among themselves for different ranks. We lift this result to the Clifford flows with a simple argument. 

\section{Preliminaries}\label{pre}

\begin{defn}
An \en-semigroup on the von Neumann algebra $\m$ is a semigroup $(\alpha_t)_{t\geq0}$ of normal, unital *-endomorphisms of $\m$ satisfying
\begin{itemize}
 \item[(i)] $\alpha_0=id$,
 \item[(ii)] $\alpha_t(\factor)\neq\factor$ for all $t\geq0$,
 \item[(iii)] $t\mapsto \rho(\alpha_t(x))$ is continuous for all $x\in\factor$, $\rho\in\factor_*$.
\end{itemize}
\end{defn}

\begin{defn} A cocycle for an \en-semigroup $\alpha$ on $\m$ is a strongly continuous family of unitaries $U=(U_t)_{t\geq0}$ satisfying $U_s\alpha_s(U_t)=U_{s+t}$ for all $s,t\geq0$.
\end{defn}

For a cocycle $U$, we automatically have $U_0=1$. Furthermore the family of endomorphisms $\alpha_t^U(x):=U_t\alpha_t(x)U_t^*$ defines an \en-semigroup. This leads to the following equivalence relations on \en-semigroups.

\begin{defn}\label{conjugacy def} Let $\alpha$ and $\beta$ be \en-semigroups on von Neumann algebras $\m$ and $\n$. 
 \begin{itemize}
  \item[(i)] $\alpha$ and $\beta$ are \emph{conjugate} if there exists a *-isomorphism $\theta:\m\to\n$ such that $\beta_t=\theta\circ\alpha_t\circ\theta^{-1}$ for all $t\geq0$.
  \item[(ii)] $\alpha$ and $\beta$ are \emph{cocycle conjugate} if there exists a cocycle $U$ for $\alpha$ such that $\beta$ is conjugate to $\alpha^U$. 
 \end{itemize}
\end{defn}

Let $\m$ be a von Neumann algebra acting standardly and $\pi:\m\to B(H)$ a normal representation. Then for an \en-semigroup $\alpha$ on $\pi(\m)$ there exists a conjugate semigroup $\pi^{-1}\circ\alpha\circ\pi$ on $\m$. Thus with out loss of generality we may restrict to algebras acting standardly.

Let $\alpha$, $\beta$ be \en-semigroups acting on \twoone factors $\m$ and $\n$ and suppose $\theta:\m\to\n$ is a *-isomorphism intertwining $\alpha$ and $\beta$. By uniqueness of the trace on $\m$ we have $\tau_\n\circ\theta=\tau_\m$, hence $\theta$ extends to a unitary $U:L^2(\m)\to L^2(\n)$. This unitary satisfies
\begin{itemize}
 \item[(i)] $U \Omega_\m=\Omega_\n$,
 \item[(ii)] $U\m U^*=\n$,
 \item[(iii)] $\beta_t(x)=U\alpha_t(U^*xU)U^*$ for all $t\geq0$, $x\in\n$,
\end{itemize} where $\Omega_M$(respectively $\Omega_N$) is unique cyclic and separating vector (which evaluates the trace), given by the image of $1_M$ (respectively $1_N$) in $L^2(M)$ (respectively $L^2(N)$).
For \twoone factors acting standardly we take this as the definition of conjugacy.

For the rest of the paper $\m$ will denote a II$_1$ factor with trace $\tau$ acting canonically on $H:=L^2(\m)$ with cyclic and separating trace vector $\Omega$. Let $\alpha$ be an E$_0$-semigroup on $\m$. Associated to $\alpha$ is a decreasing family of von Neumann algebras
$$\m=\alpha_0(\m) \supset \alpha_s(\m)\supset\alpha_t(\m)\supset \C1 \quad\hbox{for~all}\quad 0\leq s\leq t.$$
By taking relative commutants we get a filtration
$$\C1=\m_0 \subset \m_s\subset\m_t\subset \m \quad\hbox{for~all}\quad 0\leq s\leq t,$$
where $\m_t:=\m\cap\alpha_t(\m)'$.

Using the cyclic and separating property of $\Omega$ we can well-define a linear operator on the subspace $\factor\Omega$ by
$$ S_t x\Omega:= \alpha_t(x)\Omega \quad\hbox{for~all}\quad x\in\m.$$
By invariance of the trace under $\alpha_t$, each $S_t$ extends to an isometry and it is easily seen that $\{S_t:t\geq 0\}$ is a strongly continuous semigroup satisfying $S_tx=\alpha_t(x)S_t$. This motivates the following definition.

\begin{defn} A unit for $\alpha$ is a strongly continuous semigroup $T=\{T_t:t\geq 0\}$ of operators in $B(H)$ such that $T_0=1$ and $T_tx=\alpha_t(x)T_t$ for all $t\geq0$, $x\in\factor$. Denote the collection of units by $\mathcal{U}_\alpha$. We call $\{S_t:t\geq 0\}$ the \emph{canonical unit} associated to $\alpha$. \end{defn}

A gauge cocycle for $\alpha$ is a cocycle which satisfies the adaptedness condition $U_t\in\m_t$ for all $t\geq 0$. Notice that the product of two gauge cocycles $U$ and $V$ is another gauge cocycle and that the adjoint $U^*$ of a gauge cocycle is a gauge cocycle. Thus, under the multiplication $(UV)_t:=U_tV_t$, the collection of all gauge cocycles forms a group, denoted by $G(\alpha)$, called the gauge group of $\alpha$.

\begin{prop}\label{actionprop} The multiplication $(U,T)\mapsto UT$ defines an action of the gauge group on $\mathcal{U}_\alpha$.
\end{prop}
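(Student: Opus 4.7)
The plan is to verify that for any gauge cocycle $U \in G(\alpha)$ and unit $T \in \mathcal{U}_\alpha$, the family $UT$ defined by $(UT)_t := U_t T_t$ is itself a unit, and then check that this multiplication satisfies the axioms of a group action.

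First I would verify the defining properties of a unit for $UT$. The initial condition $(UT)_0 = U_0 T_0 = 1$ is immediate. Strong continuity follows from that of $U$ and $T$ together with uniform boundedness of $U$ (since each $U_t$ is unitary). For the intertwining relation, note that since $U_t \in \m_t = \m \cap \alpha_t(\m)'$, for any $x \in \factor$ we have $U_t \alpha_t(x) = \alpha_t(x) U_t$, so
\[
(UT)_t\, x \;=\; U_t T_t x \;=\; U_t \alpha_t(x) T_t \;=\; \alpha_t(x)\, U_t T_t \;=\; \alpha_t(x)\, (UT)_t.
\]
The main computation is the semigroup property. Using that $U_t \in \m \subset \factor$, the intertwining property of $T$ applied to the element $U_t$ gives $T_s U_t = \alpha_s(U_t) T_s$. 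Combining this with the cocycle identity for $U$:
\[
(UT)_s (UT)_t \;=\; U_s T_s U_t T_t \;=\; U_s \alpha_s(U_t) T_s T_t \;=\; U_{s+t}\, T_{s+t} \;=\; (UT)_{s+t}.
\]
This is the heart of the argument — it is exactly where the two different adaptedness/intertwining features (gauge cocycle lying in $\m$, unit intertwining $\alpha_s$) interlock via the cocycle identity.

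Finally I would check the action axioms. Associativity $((UV)T)_t = U_t V_t T_t = (U(VT))_t$ is pointwise trivial. The identity element of $G(\alpha)$ is the constant cocycle $\mathbf{1}_t = 1$, which manifestly acts trivially. The only nonobvious point worth mentioning is that the inverse action by $U^*$ recovers $T$, i.e. $U^*(UT) = T$, which follows pointwise from unitarity of each $U_t$. No step presents a real obstacle; the whole proposition is essentially a bookkeeping verification built on the observation that gauge cocycles live in $\factor$, so units know how to commute past them.
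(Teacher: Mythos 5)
Your argument is correct and follows essentially the same route as the paper's: the intertwining relation comes from adaptedness $U_t\in\m_t$, the semigroup law from the cocycle identity $U_s\alpha_s(U_t)=U_{s+t}$ together with $T_sU_t=\alpha_s(U_t)T_s$, and strong continuity from the unitarity of $U_t$. The only cosmetic difference is in the continuity step, where the paper expands $\norm{(U_tT_t-I)\xi}^2$ while you invoke the triangle inequality; both rest on the same observation that $U_t$ is unitary.
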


\begin{proof} The adaptedness condition ensures that
$$U_tT_tx=U_t\alpha_t(x)T_t=\alpha_t(x)U_tT_t \qquad (t\geq0,~x\in\m),$$
and it follows from the cocycle identity that
 $$U_sT_sU_tT_t=U_s\alpha_s(U_t)T_sT_t=U_{s+t}T_{s+t} \qquad (s,t\geq0).$$
Moreover, for $\xi\in H$, we have
\begin{eqnarray*}\norm{(U_tT_t-I)\xi}^2&=&\norm{U_tT_t\xi}^2+\norm{\xi}^2-2\Re\ip{\xi,U_tT_t\xi}\\
 &=&\norm{T_t\xi}^2+\norm{\xi}^2-2\Re\ip{U_t^*\xi,T_t\xi}\to 0
\end{eqnarray*}
 as $t\to 0$. Thus, since $U_0=1$, $UT$ is a unit.   \end{proof}

By Proposition \ref{actionprop}, the canonical unit induces a map $$G(\alpha)\to \mathcal{U}_\alpha, ~U\mapsto US.$$ If $US=VS$ for two gauge cocycles $U,V$ then $(U_t-V_t)S_t=0$ for all $t\geq0$, so that $(U_t-V_t)\Omega=0$. As $\tau$ is faithful we obtain $U_t-V_t=0$ for all $t\geq0$, so the canonical map $G(\alpha)\to\mathcal{U}_\alpha$ is an injection.

We end this section with some examples and by fixing some notation. These examples are already discussed in \cite{alev}.

Throughout this paper, we denote by $\k$ an arbitrary separable real Hilbert space with dimension equal to $n \in \{1,2 \cdots \infty\}$, and by $\k^\C$ be the complexification of $\k$.

Let $L^2(\R_+; \k)$ or $L^2(\R_+; \k^\C)$ be the Hilbert space of square integrable functions taking values in $\k$ or $\k^\C$ respectively. Let $\{T_t\}$ be the shift semigroup of $L^2(\R_+;\k^\C)$ defined by 
\begin{eqnarray*}(T_tf)(s) & = & 0, \quad s<t,\\
& = & f(s-t), \quad s \geq t,
\end{eqnarray*} 
for $f \in L^2(\R_+;\k^\C).$ They are semigroups of isometries and we denote their restriction to $L^2(\R_+; \k)$ also by $\{T_t\}$.

The full Fock space is defined by $$\Gamma_f(L^2(\R_+; \k^\C))=\bigoplus_{n=0}^\infty L^2(\R_+; \k^\C)\opower{n},$$ where $L^2(\R_+; \k^\C)\opower{0}=\C\Omega$, and $\Omega$ will be called as vacuum vector. 

\begin{ex}\label{clifford}
{\bf Clifford flows} Let $\H$ be a real Hilbert space and $\H_\C$ its complexification. Write $\Gamma_a(\H_\C)$ for the antisymmetric Fock space over $\H_\C$, i.e. the subspace of $\Gamma_f(\H_\C)$ generated by antisymmetric tensors. For any $f\in \H_\C$ the Fermionic creation operator $a^*(f)$ is the bounded operator defined by the linear extension of
$$a^*(f)\xi=\left\{\begin{array}{ll} f & \hbox{if}~\xi=\Omega\\ f\wedge \xi &  \hbox{if}~ \xi\perp\Omega, \end{array}\right.$$
where $f\wedge\xi$ is the image of $f\otimes\xi\in \Gamma_f(\H_\C)$ under orthogonal projection onto $\Gamma_a(\H_\C)$. The annihilation operator is defined by $a(f)=a^*(f)^*$.
The unital $C^*$-algebra $Cl(\H)$ generated by the self-adjoint elements $$\{u(f)=(a(f)+a^*(f))/\sqrt{2}:~f\in\H\}$$ is the Clifford algebra over $\H$. The vacuum $\Omega$ is cyclic and defines a tracial state for $Cl(\H)$, so the weak completion yields a II$_1$ factor; in fact it is the hyperfinite II$_1$ factor $\mathcal{R}$ \cite{voiculescu}.

If $\H=L^2(\R_+;\k)$, where $\k$ is a separable Hilbert space with dimension $n\in \{1,2,\cdots \infty\}$ as mentioned before, then $T$
the unilateral shift on $L^2(\R_+;\k)$ defines an \en-semigroup on $\mathcal{R}$ by extension of $$\alpha^n_t(u(f_1)\cdots u(f_k))=u(T_tf_1)\cdots u(T_tf_k)$$ called the Clifford flow of rank $n$.
\end{ex}

\begin{ex}\label{evenclifford}{\bf Even Clifford flows} The von Neumann algebra generated by the even products $$\r_e=\{u(f_1)u(f_2)\cdots u(f_{2n}): f_i \in L^2((0,\infty), \k),~n \in \N\}$$ is also isomorphic to the hyperfinite $II_1$ factor. The restriction of the Clifford flow $\alpha^n$ of rank $n$ to this subfactor is called the even Clifford flow of rank $n$. We denote it by $\beta^n$.

\end{ex}

\begin{ex}\label{free}{\bf Free flows}

Let $\k$ be a real Hilbert space of dimension $n \in \{1,2,\cdots \infty\}$ and for every $f\in L^2(\R_+;\k)$ define the operator $s(f)=\frac{l(f)+l(f)^*}{2}$ on $\Gamma_f(L^2(\R_+;\k^\C))$ where
$$l(f)\xi=\left\{\begin{array}{ll} f& \hbox{if}~\xi=\Omega, \\ f\otimes \xi & \hbox{if}~\ip{\xi,\Omega}=0. \end{array}\right.$$
The von Neumann algebra  $\Phi(\k)=\{s(f):f\in L^2(\R_+;\k)\}'' $, is isomorphic to the free group factor $L(F_\infty)$ and the vacuum is cyclic and separating with $\ip{\Omega,x\Omega}=\tau(x)$ (see \cite{voiculescu}).

Let $T$ be the unilateral shift on $L^2(\R_+;\k)$. Then there exists a unique \en-semigroup $\gamma^n$ on $\Phi(\k)$ satisfying
 $$\gamma^n_t(s(f)):=s(T_tf) \qquad (f\in\k,~t\geq0)$$
(see \cite{alev}), this is called the free flow of multiplicity $\dim\k$.
\end{ex}

\medskip

{\bf Notation:} Throughout this paper, for $E \subset B(H)$, we shall write $\left[E\right]$ for the closure, in the weak operator topology, of the linear subspace of $B(H)$ spanned by
$E$. Similarly, if $S \subset H$ is a subset of vectors, we shall
write $\left[S\right]$ for the norm-closed subspace of $H$ spanned
by $S$.  $\N$ denotes the set of natural numbers and $\N_0=\N\cup \{0\}.$

\section{Multi-units, index and gauge dimension}\label{multi-units}

We begin this section by defining a complementary, or dual \en-semigroup. Let $J$ be the modular conjugation associated to the vector $\Omega$ by the Tomita-Takesaki theory. We can define a complementary \en-semigroup on $\m'$ by setting
$$\alpha'_t(x')=J\alpha_t(Jx'J)J\qquad (x'\in\m').$$
Then we define a semigroup of maps on $\m\cup\m'$ by setting
$$\mu_t(x)=\left\{\begin{array}{ll} \alpha_t(x), & x\in\m, \\ \alpha'_t(x), & x\in\m'.\end{array}\right.$$

\begin{prop}
If the \en-semigroups $\alpha$ and $\beta$ on $\m$ are cocycle conjugate, the complementary \en-semigroups are also cocycle conjugate.
\end{prop}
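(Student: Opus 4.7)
The plan is to transport the cocycle conjugacy through the modular conjugation $J$. By assumption there is an automorphism $\theta$ of $\m$ and an $\alpha$-cocycle $V=(V_t)$ with $V_t\in\m$ such that $\beta_t=\theta\circ\alpha_t^V\circ\theta^{-1}$, where $\alpha_t^V(x)=V_t\alpha_t(x)V_t^*$. Since $\m$ acts standardly on $H=L^2(\m)$, the discussion preceding the proposition shows that $\theta$ extends to a unitary $W$ on $H$ with $W\Omega=\Omega$, $W\m W^*=\m$ and $\theta(x)=WxW^*$. The first step I would carry out is to show that $W$ commutes with $J$: for $x\in\m$, $JW(x\Omega)=J(\theta(x)\Omega)=\theta(x)^*\Omega=\theta(x^*)\Omega=Wx^*\Omega=WJ(x\Omega)$, and density of $\m\Omega$ in $H$ forces $JW=WJ$. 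In particular $W\m'W^*=\m'$.

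Next I would produce the candidate data on the commutant side. Set $\theta':=\mathrm{Ad}(W)|_{\m'}$, which is an automorphism of $\m'$, and define $V'_t:=JV_tJ\in\m'$. The cocycle identity for $\alpha'$ transfers directly: $V'_s\,\alpha'_s(V'_t)=JV_sJ\cdot J\alpha_s(J(JV_tJ)J)J=J\bigl(V_s\alpha_s(V_t)\bigr)J=JV_{s+t}J=V'_{s+t}$, and strong continuity of $V'$ is inherited from that of $V$. Hence $V'$ is an $\alpha'$-cocycle.

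To finish, I would verify $\beta'_t=\theta'\circ(\alpha')^{V'}_t\circ(\theta')^{-1}$ by a direct computation. For $x'\in\m'$, the element $z:=J(\theta')^{-1}(x')J$ lies in $\m$, and using $WJ=JW$ one checks that $\theta(z)=Jx'J$. Expanding the right-hand side as $WV'_t\alpha'_t((\theta')^{-1}(x'))(V'_t)^*W^*$ and collapsing the $J$'s via $WJ=JW$ and $JW^*=W^*J$ reduces it to $J\,\theta(\alpha_t^V(z))\,J=J\beta_t(Jx'J)J=\beta'_t(x')$. No step is a real obstacle; the proof is a definition chase once $WJ=JW$ is established, and the only place requiring care is tracking which side of the commutant each element sits on.
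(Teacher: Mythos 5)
Your proof is correct and follows essentially the same route as the paper: transport everything through $J$, setting $V'_t=JV_tJ$ and using $Jm'J\in\m$ to verify the cocycle identity and the conjugacy equation. The one place you add value is in unpacking the claim that conjugacy itself is preserved under complementation (via the standard-form unitary $W$ and the commutation $WJ=JW$), which the paper dismisses as ``easy to see''; otherwise the argument is the same definition chase.
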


\begin{proof}
It is easy to see conjugacy is preserved under complementation. Suppose $\{U_t\}$ is an $\alpha-$cocycle and $\beta_t(\cdot)=U_t\alpha(\cdot)U_t^*$. For any $t \geq 0$, let $v_t=JU_tJ$, then $v_t \in \m'$ and $v_t$ satisfies $$v_{s+t}=Ju_{s+t}J=JU_sJJ\alpha_s(U_t)J= JU_sJ\alpha_s'(JU_tJ)=v_s\alpha_s'(v_t).$$ So $\{v_t\}$ forms an $\alpha'$-cocycle. We also have \begin{align*}\beta_t'(m') &=J\beta_t(Jm'J)J\\ &=JU_t\alpha(Jm'J)U_t^*J\\ &=(JU_tJ)(J\alpha(Jm'J)J)(JU_t^*J)\\ &=v_t\alpha_t'(m')v_t^*,\\ \end{align*} for all $m'\in \m'$. 
  \end{proof}

\begin{defn} A \munit or multi-unit for the \en-semigroup $\alpha$ is a stongly continuous semigroup of bounded operators $(T_t)_{t\geq0}$ satisfying
$$T_tx=\mu_t(x)T_t \quad\hbox{for~all}\quad x\in\m\cup\m'$$ 
together with $T_0=1$. That is, a multi-unit is a unit for both $\alpha$ and $\alpha'$. Denote the collection of \munits for $\alpha$ by $\munitset{\alpha}$.
\end{defn}

We have already noticed $\{S_t:t\geq 0\}$ is a unit for $\alpha$. Since the \en-semigroup $\alpha$ is $*-$preserving, the modular conjugation operator commutes with $S_t$ for each $t\geq 0$. This implies $$\alpha'_t(m')S_t=J\alpha_t(Jm'J)JS_t=JS_t(Jm'J)J=S_tm' $$ for all $m'\in \m', t\geq 0,$ and $S\in \munitset{\alpha}$.   So the collection of multi-units for any \en-semigroup in a II$_1$ factor is always non-empty.

\begin{lem} If $U$ is a gauge cocycle for $\alpha$ then $US$ is a \munit for $\alpha$. Thus $U\mapsto US$ defines an injection $G(\alpha)\rightarrow\munitset{\alpha}$. \end{lem}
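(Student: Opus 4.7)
The plan is to reduce everything to facts already established in the preceding paragraphs. Proposition~\ref{actionprop} tells us $US$ is a unit for $\alpha$; what remains is to verify the dual intertwining relation $(U_tS_t)\,x' = \alpha'_t(x')\,(U_tS_t)$ for all $x'\in\m'$, $t\geq0$. The semigroup property and continuity of $US$, together with $(US)_0=1$, are already handled by Proposition~\ref{actionprop}.

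The key observation is that the canonical unit $S$ has already been shown to be a multi-unit: the computation $\alpha'_t(m')S_t = J\alpha_t(Jm'J)JS_t = JS_t(Jm'J)J = S_tm'$ (which uses $JS_t = S_tJ$, coming from $\alpha_t$ being $*$-preserving) gives $S_tx' = \alpha'_t(x')S_t$ for $x'\in\m'$. So I can write
\begin{equation*}
 U_tS_tx' \;=\; U_t\,\alpha'_t(x')\,S_t.
\end{equation*}
Now I invoke the adaptedness of the gauge cocycle: $U_t\in \m_t = \m\cap\alpha_t(\m)'\subseteq \m$, and hence $U_t\in (\m')'$. In particular $U_t$ commutes with $\alpha'_t(x')\in\m'$, so
\begin{equation*}
 U_t\,\alpha'_t(x')\,S_t \;=\; \alpha'_t(x')\,U_tS_t,
\end{equation*}
which is the required identity. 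This shows $US\in\munitset{\alpha}$.

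For injectivity, I simply inherit the argument given immediately above the lemma statement: if $US=VS$ then $(U_t-V_t)S_t\Omega = (U_t-V_t)\Omega = 0$, and since $\Omega$ is separating for $\m$ (the trace is faithful) and $U_t-V_t\in\m$, we conclude $U_t=V_t$ for every $t$. Since $\munitset{\alpha}\subseteq\mathcal{U}_\alpha$, the composition $G(\alpha)\to\munitset{\alpha}\hookrightarrow\mathcal{U}_\alpha$ being injective forces $G(\alpha)\to\munitset{\alpha}$ to be injective.

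There is no real obstacle here; the only point that requires a moment's thought is recognising that $U_t\in\m$ automatically commutes with \emph{every} element of $\m'$, and in particular with $\alpha'_t(x')$, so one does not need any further dual adaptedness of $U_t$ beyond its membership in $\m_t$.
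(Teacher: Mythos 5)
Your proof is correct. The underlying insight is the same as the paper's (the commutation between $U_t\in\m$ and elements of $\m'$), but you package it more cleanly: you quote the already-established fact that the canonical unit $S$ is a multi-unit, i.e.\ $S_t x' = \alpha'_t(x')S_t$, and then finish in one line by commuting $U_t$ past $\alpha'_t(x')$. The paper, by contrast, works on vectors $y\Omega$ with $y\in\m$, unwinds $\alpha'_t(x)$ back to its formula via the modular conjugation $J$, and re-derives the multi-unit property of $S$ inline before invoking density of $\m\Omega$. What your version buys is that the intertwining identity is obtained directly at the operator level with no appeal to density and no fresh manipulation of $J$; what the paper's version emphasizes is the explicit role of Tomita--Takesaki theory. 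Your injectivity argument is a correct reconstruction of what the paper actually establishes in the paragraph immediately preceding the lemma (and the paper does not reprove it inside the lemma); your closing observation that injectivity of $G(\alpha)\to\mathcal{U}_\alpha$ forces injectivity of $G(\alpha)\to\munitset{\alpha}$ is the right way to say it.
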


\begin{proof} Since $US$ is a unit for $\alpha$, we need only show that it is also a unit for $\alpha'$. For all $t\geq0$, $x\in\m'$ and $y\in\m$ we have
\begin{align*}\alpha_t'(x)U_tS_ty\Omega&=\alpha_t'(x)U_t\alpha_t(y)\Omega=U_t\alpha_t(y)\alpha_t'(x)\Omega\\
&=U_t\alpha_t(y)J\alpha_t(JxJ)J\Omega
=U_t\alpha_t(y)\alpha_t(Jx^*J)\Omega\\
&=U_tS_tyJx^*J\Omega=U_tS_tyx\Omega=U_tS_txy\Omega.
\end{align*}   \end{proof}

\begin{lem} Let $X$ and $Y$ be \munits for $\alpha$. Then $X_t^*Y_t=e^{\lambda t}1$ for some constant $\lambda\in \C$.\end{lem}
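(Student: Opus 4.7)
My plan is to show $X_t^*Y_t$ lies in $(\m\cup\m')' = \C\cdot 1$ for each $t$, then use the semigroup property together with continuity to upgrade the resulting scalar function to an exponential.

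\textbf{Step 1 (commutation with $\m$ and $\m'$).} From the unit relation $X_t y^* = \alpha_t(y^*) X_t$ for $y\in\m$, taking adjoints gives $y X_t^* = X_t^*\alpha_t(y)$. Combined with $Y_t y = \alpha_t(y) Y_t$, I get
\[
 X_t^*Y_t\, y = X_t^*\alpha_t(y)Y_t = y X_t^*Y_t \qquad (y\in\m).
\]
The identical computation, carried out with $\alpha'$ in place of $\alpha$ and using that both $X$ and $Y$ are multi-units, gives $X_t^*Y_t y' = y' X_t^*Y_t$ for $y'\in\m'$. Since $\m$ is a factor, $(\m\cup\m')' = \C\cdot 1$, so there exists a scalar $c(t)\in\C$ with
\[
 X_t^*Y_t = c(t)\,1.
\]

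\textbf{Step 2 (multiplicativity).} Using the semigroup property of units,
\[
 c(s+t)\,1 = X_{s+t}^*Y_{s+t} = X_t^*X_s^*Y_sY_t = X_t^*\bigl(c(s)\,1\bigr)Y_t = c(s)\,X_t^*Y_t = c(s)c(t)\,1,
\]
so $c(s+t)=c(s)c(t)$ and $c(0)=1$.

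\textbf{Step 3 (continuity of $c$).} Fix any $\xi\neq 0$; I want $c(t)\to 1$ as $t\to 0^+$. Strong continuity of $Y$ gives $Y_t\xi\to\xi$ in norm, and the semigroup $X$ is norm-bounded on $[0,1]$ by uniform boundedness, hence $X_t^*(Y_t\xi-\xi)\to 0$. For the remaining piece, $t\mapsto X_t\eta$ is strongly continuous, so for every $\eta$,
\[
 \langle X_t^*\xi,\eta\rangle = \langle\xi,X_t\eta\rangle \longrightarrow \langle\xi,\eta\rangle \qquad (t\to 0),
\]
i.e.\ $X_t^*\xi\to\xi$ weakly. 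Therefore $c(t)\xi = X_t^*Y_t\xi\to\xi$ weakly, which forces the scalar $c(t)$ to converge to $1$ (test against $\xi$ itself). Multiplicativity then propagates continuity at $0$ to continuity on all of $[0,\infty)$.

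\textbf{Step 4 (exponential form).} A continuous multiplicative $c:[0,\infty)\to\C$ with $c(0)=1$ automatically satisfies $c(t)\neq 0$ for all $t$ (otherwise iterating $c(t_0)=c(t_0/2^n)^{2^n}=0$ together with continuity at $0$ would contradict $c(0)=1$), and any such function has the form $c(t)=e^{\lambda t}$ for some $\lambda\in\C$, as desired.

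The only genuine subtlety is Step 3: adjoints of a strongly continuous semigroup are generally only weakly continuous, so I cannot simply say $X_t^*Y_t\to 1$ strongly. The point is that I only need weak convergence on a single nonzero vector, since I already know $X_t^*Y_t$ is a scalar operator.
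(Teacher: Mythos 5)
Your proof is correct and follows the paper's argument essentially step for step: commutation with $\m\cup\m'$ forces $X_t^*Y_t$ into $\C 1$, the semigroup property gives multiplicativity, and continuity yields the exponential form. The only difference is that your Step 3 is more circuitous than it needs to be: since you already know $X_t^*Y_t = c(t)1$, you can simply write $c(t)\|\xi\|^2 = \langle\xi, X_t^*Y_t\xi\rangle = \langle X_t\xi, Y_t\xi\rangle$ and appeal to strong continuity of both semigroups directly (with $\xi = \Omega$, this is exactly what the paper does), thereby sidestepping the worry about adjoint continuity entirely.
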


\begin{proof} For any $x\in\m\cup\m'$ we have
$$X^*_tY_tx=X_t^*\mu_t(x)Y_t=(\mu_t(x^*)X_t)^*Y_t=(X_tx^*)^*Y_t=xX_t^*Y_t,$$
hence $X_t^*Y_t\in(\m\cup\m')'= \C 1$. We note that $$X_s^*Y_sX_t^*Y_t=X_t^*X_s^*Y_sY_t=X_{s+t}^*Y_{s+t}$$ and hence the complex valued function $f(t)=\ip{\Omega,X_t^*Y_t\Omega}$ is continuous and satisfies $f(s+t)=f(s)f(t)$. Since $f(0)=1$ we have $f(t)=e^{\lambda t}$ for some $\lambda\in\C$.   \end{proof}

Thus we can define a covariance function $c:\mathcal{U}_{\alpha,\alpha'}\times\mathcal{U}_{\alpha,\alpha'}\to\C$ by $X_t^*Y_t=e^{c(X,Y) t}1$ for all $t\in\R_+$. Since the covariance function is conditionally positive definite (see Proposition 2.5.2 of \cite{arveson}) the asignment
$$ \ip{f,g}\mapsto \sum_{X,Y\in \mathcal{U}_{\alpha,\alpha'}}{c(X,Y)\overline{f(X)}g(Y)} $$
defines a positive semidefinite form on the space of finitely supported functions $f:\mathcal{U}_{\alpha,\alpha'}\to\C$ satisfying $\sum_{X\in \mathcal{U}_{\alpha,\alpha'}}{f(X)}=0$. Hence we may quotient and complete to obtain a Hilbert space $H(\mathcal{U}_{\alpha,\alpha'})$. 

\begin{defn} Define the coupling index $\ind_c(\alpha)$ of the \en-semigroup $\alpha$ as the cardinal $\dim H(\mathcal{U}_{\alpha,\alpha'})$.
\end{defn}

\begin{prop}\label{indexprop} If $\alpha$ and $\beta$ are cocycle conjugate \en-semigroups then they have the same coupling index. Furthermore, if $\gamma$ is an \en-semigroup on the \twoone factor $\mathrm{W}$ then $$\ind_c(\alpha\otimes\gamma)\geq \ind_c(\alpha)+\ind_c(\gamma).$$ \end{prop}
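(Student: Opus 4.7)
The plan is to handle the two parts of a cocycle conjugacy separately and then establish an orthogonal direct-sum embedding of Hilbert spaces for the tensor-product inequality.

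For the first assertion, I would factor cocycle conjugacy through a $*$-isomorphism and a cocycle perturbation, showing that each preserves the pair $(\mathcal{U}_{\alpha,\alpha'},c)$ up to unitary equivalence. Given a $*$-isomorphism $\theta:\m\to\n$ intertwining $\alpha$ and $\beta$, the canonical extension $V:L^2(\m)\to L^2(\n)$ satisfies $V\Omega_\m=\Omega_\n$, and by uniqueness of the modular conjugation also $VJ_\m V^*=J_\n$. Hence conjugation by $V$ intertwines $\alpha'$ and $\beta'$, and $T\mapsto VTV^*$ provides a covariance-preserving bijection of multi-unit sets. For the cocycle perturbation $\alpha^U$, I would first compute $(\alpha^U)'_t=\mathrm{Ad}(JU_tJ)\circ\alpha'_t$ (with $JU_tJ$ an $\alpha'$-cocycle, as shown in the preceding proposition), and then verify that $T_t\mapsto(JU_tJ)U_tT_t$ is a bijection $\mathcal{U}_{\alpha,\alpha'}\to\mathcal{U}_{\alpha^U,(\alpha^U)'}$. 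The semigroup identity comes from combining the two cocycle identities and using that $JU_tJ\in\m'$ commutes with $U_t\in\m$; the intertwining relations with $\alpha^U$ and $(\alpha^U)'$ follow by direct computation. Crucially, because both perturbing factors are unitary, they cancel in $((JU_tJ)U_tT_t)^*((JU_tJ)U_tT'_t)=T_t^*T'_t$, so the covariance function is preserved.

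For the tensor-product step, I would use that $L^2(\m\overline{\otimes}\mathrm{W})=L^2(\m)\otimes L^2(\mathrm{W})$ with modular conjugation $J_\m\otimes J_\mathrm{W}$, giving $(\alpha\otimes\gamma)'=\alpha'\otimes\gamma'$ and canonical unit $S^\alpha\otimes S^\gamma$. For multi-units $X$ of $\alpha$ and $Y$ of $\gamma$, the tensor $X\otimes Y$ is a multi-unit of $\alpha\otimes\gamma$, and the covariance splits additively,
\[
c(X_1\otimes Y_1,X_2\otimes Y_2)=c_\alpha(X_1,X_2)+c_\gamma(Y_1,Y_2).
\]
Using this additivity, I would define a map on finitely-supported sum-zero functions by extending $\delta_X-\delta_{S^\alpha}\mapsto\delta_{X\otimes S^\gamma}-\delta_{S^\alpha\otimes S^\gamma}$ and $\delta_Y-\delta_{S^\gamma}\mapsto\delta_{S^\alpha\otimes Y}-\delta_{S^\alpha\otimes S^\gamma}$. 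A short inner-product calculation (using the additivity above) verifies that each summand is embedded isometrically and that their images are mutually orthogonal, yielding an isometric embedding $H(\mathcal{U}_{\alpha,\alpha'})\oplus H(\mathcal{U}_{\gamma,\gamma'})\hookrightarrow H(\mathcal{U}_{\alpha\otimes\gamma,(\alpha\otimes\gamma)'})$, from which the dimension inequality is immediate.

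The main obstacle is locating the correct perturbation formula $(JU_tJ)U_tT_t$: neither $U_tT_t$ alone nor $T_t$ alone transports to a multi-unit of $\alpha^U$, because the perturbation of $\alpha$ also twists its dual by $\mathrm{Ad}(JU_tJ)$, forcing a \emph{double} twist of the multi-unit. Once this formula is identified, all subsequent verifications reduce to routine algebraic manipulations using the cocycle identities and the commutation of $\m$ with $\m'$, and the orthogonal splitting for the tensor product follows cleanly from the additivity of $c$.
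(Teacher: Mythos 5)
Your proposal matches the paper's argument essentially verbatim: you factor cocycle conjugacy into a spatial $*$-isomorphism handled by $\mathrm{Ad}_V$ (using $VJ_\m V^*=J_\n$) and a cocycle perturbation handled by the double-twist $T\mapsto (JU_tJ)U_tT$, and you obtain the tensor-product inequality from the additive splitting $c(X_1\otimes Y_1,X_2\otimes Y_2)=c_\alpha(X_1,X_2)+c_\gamma(Y_1,Y_2)$ together with an orthogonal isometric embedding of $H(\mathcal{U}_{\alpha,\alpha'})\oplus H(\mathcal{U}_{\gamma,\gamma'})$. The only cosmetic difference is that the paper merely cites Arveson's Lemma 3.7.5 for the last embedding where you sketch the verification directly.
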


\begin{proof} For the first statement it is enough to give a bijection $\mathcal{U}_{\alpha,\alpha'}\to\mathcal{U}_{\beta,\beta'}$ preserving the covariance. For conjugate semigroups with intertwining unitary $V$, $Ad_V$ clearly does the job. So assume $\alpha=\beta^U$, for an $\alpha$-cocycle $U$. Then the map $T\mapsto JUJUT$ suffices (see Theorem \ref{sps invariant}). 

 For the inequality note that every pair of \munits $X^\alpha$, $X^\gamma$ for $\alpha$ and $\gamma$ respectively give a \munit $X^\alpha\otimes X^\gamma$ for $\alpha\otimes\gamma$. As
$$({X_t^\alpha}\otimes {X_t^\gamma})^*({Y_t}^\alpha\otimes {Y_t^\gamma})=e^{(c(X^\alpha,Y^\alpha)+c(X^\gamma,Y^\gamma))t}1$$
there exists an isometry $$\munitspace{\alpha}\oplus \munitspace{\gamma}\hookrightarrow H(\mathcal{U}_{\alpha\otimes\gamma,(\alpha\otimes\gamma)'})$$ (see \cite{arveson} Lemma 3.7.5).   \end{proof}

\begin{prop}\label{extn} Let $\alpha$ be an \en-semigroup on the II$_1$ factor $\factor$. If there exists an \en-semigroup $\sigma$ on $B(L^2(\factor))$ satisfying 
 $$\sigma_t(x)=\left\{\begin{array}{ll} \alpha_t(x) & \text{if }x\in\m, \\ \alpha_t'(x) & \text{if }x\in\m', \end{array}\right.\qquad\text{for all }t\geq0,$$
 then $\ind_c(\alpha)$ is equal to the Powers-Arveson index of $\sigma$.
\end{prop}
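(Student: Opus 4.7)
The plan is to show that, under the hypothesis, the multi-units of $\alpha$ coincide exactly with the units of $\sigma$ as an \en-semigroup on $B(L^2(\m))$, and that the covariance function on $\munitset{\alpha}$ coincides with the Arveson covariance of the $\sigma$-units. Once this is done, the conditionally positive definite kernels used in the two constructions are identical, so $H(\munitset{\alpha})$ is canonically isomorphic to the Arveson Hilbert space of $\sigma$, and the equality of indices follows by taking dimensions.

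One direction is immediate: any strongly continuous semigroup $T$ with $T_tx=\sigma_t(x)T_t$ for all $x\in B(L^2(\m))$ satisfies $T_tx=\alpha_t(x)T_t$ when $x\in\m$ and $T_tx=\alpha_t'(x)T_t$ when $x\in\m'$, so $T\in\munitset{\alpha}$.

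The substantive step is the converse. Starting from $T\in\munitset{\alpha}$, I would propagate the intertwining relation from $\m\cup\m'$ to all of $B(L^2(\m))$. For $x\in\m$ and $y\in\m'$ one has
\[
T_t(xy)=\sigma_t(x)T_ty=\sigma_t(x)\sigma_t(y)T_t=\sigma_t(xy)T_t,
\]
and by linearity this extends to the $*$-subalgebra $\mathrm{A}$ generated by $\m\cup\m'$. Since $\m$ is a factor acting standardly on $L^2(\m)$, we have $(\m\cup\m')'=\m\cap\m'=\C 1$, so $\mathrm{A}$ is $\sigma$-weakly dense in $B(L^2(\m))$. Normality of $\sigma_t$ then ensures that both $z\mapsto T_tz$ and $z\mapsto\sigma_t(z)T_t$ are $\sigma$-weakly continuous, so the relation $T_tz=\sigma_t(z)T_t$ extends to every $z\in B(L^2(\m))$, making $T$ a unit of $\sigma$.

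For the covariance, observe that for $X,Y\in\munitset{\alpha}$ the scalar $e^{c(X,Y)t}$ determined by $X_t^*Y_t$ already depends only on the commutation of $X_t^*Y_t$ with $\m\cup\m'$, and the Arveson covariance of $X,Y$ viewed as $\sigma$-units is defined by exactly the same scalar. Hence the two covariance kernels agree, and taking dimensions of the associated Hilbert spaces yields $\ind_c(\alpha)=\ind(\sigma)$. The main obstacle in the argument is the weak-density extension step: the factor property $(\m\cup\m')''=B(L^2(\m))$ is precisely what permits passage from a multi-unit of $\alpha$ to a bona fide unit of $\sigma$, and without it the multi-units of $\alpha$ could be strictly more numerous than the $\sigma$-units.
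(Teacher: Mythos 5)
Your proof is correct and takes essentially the same route as the paper: both directions match, with your converse step (extending the intertwining relation from $\m\cup\m'$ to the $*$-algebra it generates, then using $\sigma$-weak density via $(\m\cup\m')''=B(L^2(\m))$ together with normality of $\sigma_t$) being the same density argument the paper uses, phrased a bit more carefully than the paper's appeal to approximating nets of single products $y_iz_i$. The identification of the two covariance functions is likewise the same observation.
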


\begin{proof} 
 Clearly if $T$ is a unit for $\sigma$ then it is a \munit for $\alpha$. Conversely if $T$ is a \munit for $\alpha$ and $x\in B(H)$ we may pick nets $(y_i)_{i\in I}\subset \m$ and $(z_i)_{i\in I}\subset \m'$ satisfying $y_iz_i\to x$ in the ultraweak topology. Then by ultraweak continuity of $\sigma$,
 $\sigma_t(x)T_t=\lim_{i\in I}\sigma_t(y_iz_i)T_t$, but $$\sigma_t(y_iz_i)T_t=\alpha_t(y_i)\alpha_t'(z_i)T_t=T_ty_iz_i\to T_tx,$$
 that is, $T$ is a unit for $\sigma$. Thus $\mathcal{U}_{\alpha,\alpha'}=\mathcal{U}_\sigma$ and the induced covariance function on $\mathcal{U}_\sigma$ is precisely that of \cite{arveson}, section 2.5.   \end{proof}

\begin{rem}
All our known examples of \en-semigroups on II$_1$ factors do not admit an extension as described in Proposition \ref{extn}. This follows from our computations on the super product systems associated to these examples, in section \ref{coupling}. It is an interesting open question to construct an \en-semigroup on a II$_1$ factor which admits such an extension to $B(L^2(\cdot))$.
\end{rem}

The gauge group also forms a cocycle conjugacy invariant for $\alpha$. Conjugate \en-semigroups clearly have isomorphic gauge groups, and if $U$ is a unitary cocycle for $\alpha$ and $\beta=\alpha^U$ then we have the group isomorphism $Ad_U:G(\alpha)\to G(\beta)$.

\begin{lem} If $U,V\in G(\alpha)$ then there exists $\lambda\in\C$ such that $\tau(U_t^*V_t)=e^{\lambda t}$ for all $t\geq0$. In particular we have the identity
\begin{equation}\label{multiplicativeeq} \tau(U_{s+t}^*V_{s+t})=\tau(U_s^*V_s)\tau(U_t^*V_t) \qquad (s,t\geq0).\end{equation} \end{lem}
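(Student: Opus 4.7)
The plan is to reduce the identity \eqref{multiplicativeeq} to an instance of the future independence property mentioned in the introduction, and then to obtain the exponential form from multiplicativity plus continuity. Concretely, I would proceed in four short steps.

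First, I would exploit the cocycle identity together with adaptedness. Since $U_{s+t}=U_s\alpha_s(U_t)$ and $V_{s+t}=V_s\alpha_s(V_t)$, we have
\[U_{s+t}^*V_{s+t}=\alpha_s(U_t^*)\,U_s^*V_s\,\alpha_s(V_t).\]
Now $U_s^*V_s\in\m_s=\m\cap\alpha_s(\m)'$, and $\alpha_s(U_t^*),\alpha_s(V_t)\in\alpha_s(\m)$, so these factors commute and the expression collapses to
\[U_{s+t}^*V_{s+t}=U_s^*V_s\cdot\alpha_s(U_t^*V_t).\]

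Second, I would establish the key conditional-expectation fact: the trace-preserving conditional expectation $E_s:\m\to\alpha_s(\m)$ sends $\m_s$ into $\C 1$, and more precisely $E_s(x)=\tau(x)1$ for $x\in\m_s$. This is standard for a subfactor inclusion: $\alpha_s(\m)$ is a subfactor since $\alpha_s(\m)'\cap\alpha_s(\m)=\alpha_s(\m'\cap\m)=\C 1$; if $x\in\m_s$ then for every $n\in\alpha_s(\m)$ we have $E_s(xn)=E_s(nx)$, i.e.\ $nE_s(x)=E_s(x)n$, so $E_s(x)\in\alpha_s(\m)'\cap\alpha_s(\m)=\C 1$, and taking traces identifies the scalar as $\tau(x)$. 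Applying $\tau$ to the identity from the previous step and using $\tau\circ\alpha_s=\tau$ (automatic on a II$_1$ factor), together with $\tau(x\alpha_s(y))=\tau(E_s(x)\alpha_s(y))=\tau(x)\tau(y)$, gives
\[\tau(U_{s+t}^*V_{s+t})=\tau(U_s^*V_s)\,\tau(U_t^*V_t).\]

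Third, I would set $f(t):=\tau(U_t^*V_t)=\langle U_t\Omega,V_t\Omega\rangle$; strong continuity of $U$ and $V$ makes $t\mapsto U_t\Omega$ and $t\mapsto V_t\Omega$ norm-continuous, so $f$ is continuous, with $f(0)=1$ and $f(s+t)=f(s)f(t)$.

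Fourth, I would deduce $f(t)=e^{\lambda t}$ in the standard way: $f$ cannot vanish, since $f(t_0)=0$ would give $f(t_0/2^n)=0$ for all $n$ by multiplicativity, contradicting $f(0)=1$ by continuity; hence $f$ admits a continuous logarithm, and the resulting additive continuous function $\R_+\to\C$ is necessarily linear, $\log f(t)=\lambda t$. I do not foresee a real obstacle here; the only step with any content is the conditional-expectation observation in step two, and that is a routine subfactor fact which is the natural home of the ``future independence'' property quoted in the introduction.
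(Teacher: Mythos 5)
Your proof is correct, but it takes a genuinely different route from the paper's. The paper gives a one-line argument: since $S_t\Omega=\Omega$, it writes $\tau(U_t^*V_t)=\ip{\Omega,(U_tS_t)^*(V_tS_t)\Omega}$ and then invokes the two lemmas proved just before this one --- that $US$ and $VS$ are multi-units when $U,V\in G(\alpha)$, and that any two multi-units $X,Y$ satisfy $X_t^*Y_t=e^{c(X,Y)t}1$ --- to get $\tau(U_t^*V_t)=e^{c(US,VS)t}$ immediately, with the multiplicative identity \eqref{multiplicativeeq} then trivial. Your argument instead works entirely inside $\m$: you use the cocycle identity plus adaptedness to factor $U_{s+t}^*V_{s+t}=U_s^*V_s\,\alpha_s(U_t^*V_t)$, then prove the independence $\tau(x\alpha_s(y))=\tau(x)\tau(y)$ for $x\in\m_s$ via the trace-preserving conditional expectation onto the subfactor $\alpha_s(\m)$, and finally recover the exponential form from continuity and multiplicativity. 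This is more self-contained --- it avoids the multi-unit machinery and the modular conjugation entirely --- but what the paper's approach buys is the explicit identification of the constant as $c_*(U,V)=c(US,VS)$, which is precisely what is used in the sentences following the lemma to define the covariance $c_*$ on $G(\alpha)$ as the pullback of the covariance $c$ on $\munitset{\alpha}$. Your argument is sound, though if you wished to continue with the section you would still need to exhibit the relation between your $\lambda$ and the covariance of the associated multi-units.
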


\begin{proof} As $\Omega$ is invariant under $S$ we have
\begin{eqnarray}\tau(U^*_tV_t)&=&\ip{\Omega,U^*_tV_t\Omega}
=\ip{\Omega,S_t^*U^*_tV_tS_t\Omega}
=e^{c(US,VS)t}\label{fundamentalidentity}\end{eqnarray}
as required. Equation (\ref{multiplicativeeq}) follows immediately.   \end{proof}

For a pair of gauge cocycles $U,V$, we will write the corresponding covariance function on $G(\alpha)$ as $c_*(U,V):=c(US,VS)$. It is clear that $c_*(\cdot,\cdot)$ is just the pullback of $c(\cdot,\cdot)$ along the injective map $G(\alpha)\to\mathcal{U}_{\alpha,\alpha'}$ induced by the canoncial unit. Equation (\ref{fundamentalidentity}) thus reduces to $\tau(U_t^*V_t)=e^{c_*(U,V)t}$. We can now repeat the above construction with $(G(\alpha),c_*(\cdot,\cdot))$ in place of $(\mathcal{U}_{\alpha,\alpha'},c(\cdot,\cdot))$ to obtain a Hilbert space $H(G(\alpha))$.

\begin{defn} The dimension of the gauge group $G(\alpha)$ is defined to be the cardinal $\dim G(\alpha):=\dim H(G(\alpha))$. We will also refer to this as the gauge index of the \en-semigroup $\alpha$. \end{defn}

\begin{thm} The gauge index is a cocycle conjugacy invariant. Furthermore, it satisfies
$$\dim G(\alpha\otimes\beta)\geq \dim G(\alpha)+\dim G(\beta)$$
for any \en-semgiroup $\beta$ on a second II$_1$ factor. \end{thm}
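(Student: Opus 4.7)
The plan is to adapt the argument of Proposition \ref{indexprop} to the pulled-back covariance $c_*$ on $G(\alpha)$, using that the Hilbert-space construction $(S,c)\mapsto H(S,c)$ is functorial under covariance-preserving maps. By Definition \ref{conjugacy def} cocycle conjugacy decomposes as a *-isomorphism composed with a cocycle perturbation, so it suffices to exhibit a covariance-preserving group isomorphism of gauge groups in each of these two cases.

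For a *-isomorphism $\theta:\m\to\n$ intertwining $\alpha$ and $\gamma$, trace-preservation extends $\theta$ to a unitary $W:L^2(\m)\to L^2(\n)$ with $W\Omega_\m=\Omega_\n$, and $U\mapsto(WU_tW^*)_t$ is a group isomorphism $G(\alpha)\to G(\gamma)$ (since relative commutants go to relative commutants) that preserves $c_*$ because it preserves the trace vector. For a cocycle perturbation $\gamma=\alpha^V$ with $V_t\in\m$, I propose the map $\Phi:G(\alpha)\to G(\alpha^V)$ given by $U\mapsto VUV^*$. Verifying that $\Phi$ is well-defined is the main technical step: $(V_tU_tV_t^*)\in\m$ clearly, and lies in $\alpha^V_t(\m)'=V_t\alpha_t(\m)'V_t^*$ since $U_t\in\alpha_t(\m)'$; the $\alpha^V$-cocycle identity reduces via $V_{s+t}=V_s\alpha_s(V_t)$ and $U_{s+t}=U_s\alpha_s(U_t)$ to the commutation of $U_s$ with $\alpha_s(V_t)$, which holds because $U_s\in\alpha_s(\m)'$ and $V_t\in\m$. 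By the trace property,
$$\tau\bigl((V_tU_tV_t^*)^*(V_tU'_tV_t^*)\bigr)=\tau(V_tU_t^*U'_tV_t^*)=\tau(U_t^*U'_t),$$
so $c_*$ is preserved, and composing the two cases handles cocycle conjugacy.

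For the tensor inequality, each pair $(U,W)\in G(\alpha)\times G(\beta)$ produces a gauge cocycle $U\otimes W\in G(\alpha\otimes\beta)$, since $\m_t\overtimes\n_t$ is contained in the appropriate relative commutant of $\m\overtimes\n$ and the cocycle identity tensorises trivially. Because $\tau_{\m\overtimes\n}=\tau_\m\otimes\tau_\n$,
$$c_*^{\alpha\otimes\beta}(U\otimes W,U'\otimes W')=c_*^\alpha(U,U')+c_*^\beta(W,W'),$$
and in particular the sub-collections $\{U\otimes 1:U\in G(\alpha)\}$ and $\{1\otimes W:W\in G(\beta)\}$ inherit the covariances of $G(\alpha)$ and $G(\beta)$. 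Invoking the direct-sum construction of \cite{arveson} Lemma 3.7.5 with this additive covariance, exactly as in Proposition \ref{indexprop}, yields an isometric embedding $H(G(\alpha))\oplus H(G(\beta))\hookrightarrow H(G(\alpha\otimes\beta))$, from which the dimension inequality follows. The only genuinely non-formal step is checking that $\Phi$ respects the $\alpha^V$-cocycle relation; everything else reduces to trace manipulation and the standard Hilbert-space construction for conditionally positive covariance functions.
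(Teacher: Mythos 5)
Your proposal is correct and follows essentially the same route as the paper: transport gauge cocycles via $\mathrm{Ad}$ by the perturbing cocycle (and via the trace-preserving intertwining unitary in the conjugacy case), observe that $c_*$ is preserved by cyclicity of $\tau$, and invoke the direct-sum construction of Arveson's Lemma 3.7.5 for the tensor inequality, exactly as in Proposition~\ref{indexprop}. The only difference is presentational: you explicitly verify that $\mathrm{Ad}_V$ lands in $G(\alpha^V)$ and respects the $\alpha^V$-cocycle identity, whereas the paper asserts the group isomorphism $\mathrm{Ad}_U:G(\alpha)\to G(\alpha^U)$ without spelling out that check.
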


\begin{proof} If $U$ is a unitary cocycle for $\alpha$ then for $V,W\in G(\alpha)$,
$$\tau(U_sV_s^*U_s^*U_sW_sU_s^*)=\tau(U_sV_s^*W_sU_s^*)=\tau(V_s^*W_s),$$
so $\dim H(G(\alpha))=\dim H(G(\alpha^U))$. If $\beta$ is conjugate to $\alpha^U$, then as the intertwining *-isomorphism also preserves the trace $\dim H(G(\alpha^U))$ is equal to $\dim H(G(\beta))$. The inequality follows as in Proposition \ref{indexprop}.   \end{proof}

%%%%%%%%%%%%%%%%%%%%%%%%%%%%%%%%%%%
%%%%%%%%%%%%%%%%%%%%%%%%%%%%%%%%%%%%
%%%%%%%%%%%%%%%%%%%%%%%%%%%%%%%%%%%%%%
\section{Super product systems}

In this section and following section we develop tools to analyse semigroups on II$_1$ factors. Here we introduce the notion of a super product system of Hilbert spaces, which is a generalization of the product systems introduced by Arveson. The second named author heard this notion from C. K\"ostler during a conversation, but we could not find any literature dealing with `super product systems'. 

\begin{defn}
\label{superproductsystem}
A super product system of Hilbert spaces is a one parameter family
of separable Hilbert spaces $\{H_t: t \geq 0 \}$, together with isometries $$U_{s,t} : H_s \otimes H_t ~\mapsto H_{s+t}~ \mbox{for}~ s, t \in (0,\infty),$$
satisfying the following two axioms of associativity and measurability.

\medskip
\noindent(i) (Associativity) For any $s_1, s_2, s_3 \in (0,\infty)$
$$U_{s_1, s_2 + s_3}( 1_{H_{s_1}} \otimes U_{s_2 ,
s_3})=  U_{s_1+ s_2 , s_3}( U_{s_1 ,
s_2} \otimes 1_{H_{s_3}}).$$

\noindent (ii) (Measurability) The space $\H=\{(t, \xi_t): t \in (0,\infty), \xi_t \in H_t\}$ is equipped with a structure of standard Borel space that is compatible with the projection $p:\H\mapsto (0,\infty)$ given by $p((t, \xi_t)=t$, tensor products and the inner products  as in the definition of a product system (see 3.1.2, \cite{arveson}).  
\end{defn}

\begin{rems}
The theory of subproduct systems or inclusion systems is studied in \cite{BM} and \cite{OB}, where the embedding map is reversed, that is the operator $U_{s,t} : H_s \otimes H_t ~\mapsto H_{s+t}$ is assumed to be a co-isometry. But unlike subproduct systems, which can possibly be finite dimensional, super product systems are either all one-dimensional or all infinite dimensional. If $d(t)$ is the dimension of the separable Hilbert space $H_t$, then, since $ H_s \otimes H_t$ embeds isometrically into $H_{s+t}$, the relation $d(s)d(t)\leq d(s+t)$ is satisfied for all $s,t>0$. The only possibility is $d(t)=1$ for all $t>0$ or $d(t)=\infty$ for all $t>0$.
\end{rems}

\begin{defn} By an isomorphism between super product systems $(H^1_t, U^1_{s,t})$ and $(H^2_t, U^2_{s,t})$ we mean an isomorphism of Borel spaces $V:\H^1 \mapsto \H^2$ whose restriction to each fiber provides an unitary operator $V_t:H^1_t\mapsto H^2_t$ satisfying \begin{equation}\label{prodiso}V_{s+t}U^1_{s,t}= U_{s,t}^2 (V_s \otimes V_t).\end{equation} 

\end{defn}

\begin{defn} A unit for a super product system $(H_t, U_{s,t})$ is a measurable section $\{u_t :u_t \in H_t\}$ satisfying $$U_{s,t}(u_s\otimes u_t)=u_{s+t}~\forall ~ s,t \in (0,\infty).$$ \end{defn}

Similar to product systems, a super product system is called spatial if it admits a unit. From here onwards we assume all our super product systems admit a unit, since that is the kind of super product system we will encounter while dealing with II$_1$ factors. We fix a unit denoted by $\{\Omega_t\in H_t\}$ and call that the canonical unit. We set $H_0=\C\Omega_0$.

\begin{defn} Let  $(H_t, U_{s,t})$ be a spatial super product system with the canonical unit $\{\Omega_t\}$. A unital unit is a unit $\{u_t\}_{t\geq0}$ satisfying $\norm{u_t}=1$. An exponential unit is a unit $\{u_t\}_{t\geq0}$ satisfying $\ip{u_t,\Omega_t}=1$. 
We denote the set of unital units by $\mathfrak{U}(H)$ and the collection of exponential units by $\mathfrak{U}_\Omega(H)$.
\end{defn}

%Note that the canoncial embedding of $\m$ into $H$ defines an injective map $G(\alpha)\hookrightarrow \mathfrak{C}(\alpha)$. 
\begin{rems} Every unit $\{u_t\}$ satisfies
$$\norm{u_{s+t}}=\norm{u_s\otimes u_t}=\norm{u_s}\norm{u_t} \forall ~s,t \geq 0.$$
So $\norm{u_t}=e^{\lambda t}$ for some $\lambda\in\R$. On the other hand every unit also satisfies 
$$\ip{u_{s+t}, \Omega_{s+t}}=\ip{u_s\otimes u_t, \Omega_{s}\otimes \Omega_t}= \ip{u_{s}, \Omega_{s}} \ip{u_{t}, \Omega_{t}}~~ \forall s,t \geq 0.$$ So $\ip{u_t,\Omega_t}=e^{\mu t}$ for some $\mu\in\R$. 
Thus given an exponential unit $\{u_t\}$  the unit $\{e^{-\lambda t}u_t\}$ is unital, and given a unital unit $\{u_t\}$ the unit  $\{e^{-\mu t}u_t\}$ is exponential. It is easily seen that this defines a bijection $\mathfrak{U}_\Omega(H)\to\mathfrak{U}(H)$. 
\end{rems}

\begin{defn} An addit for a spatial super product system $(H_t, U_{s,t})$, with canonical unit $\{\Omega_t\}$, is a measurable family of vectors $\{b_t:t\geq 0\}$ satisfying
\begin{itemize}
\item[(i)] $b_t\in H_t$ for all $t\geq 0$,
\item[(ii)] $U_{s,t}(b_s \otimes \Omega_t) +U_{s,t}(\Omega_s \otimes b_t) = b_{s+t}$ for all $s,t\geq0$.
\end{itemize}
We say an addit  is centred if $\ip{\Omega_t,b_t}=0$ for all $t\geq0$. Denote the set of all addits by $\mathfrak{A}(H)$. 
\end{defn}

Since $\Omega_t$ is a unit, every addit $b$ can be written as $b_t=c_t+\lambda_t \Omega_t$ such that $c$ is a centered addit and $\lambda_t \in \C$. 
Since $t \mapsto \lambda_t$ is measurable and $\lambda_{s+t}=\lambda_s+\lambda_t$, we have $\lambda_t=\lambda t$ for some $\lambda\in\C$.

\begin{lem}\label{additlemma} Let $b$ and $c$ be centered addits. Then
\begin{equation} \ip{b_t,c_t}=t\ip{b_1,c_1}. \label{additidentity}\end{equation}
\end{lem}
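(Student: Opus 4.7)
The plan is to derive a Cauchy-type functional equation for $f(t):=\ip{b_t,c_t}$ from the addit axiom, and then invoke measurability to pin down the linear solution.

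First I would apply the addit relation to both $b_{s+t}$ and $c_{s+t}$, obtaining
\[
b_{s+t}=U_{s,t}(b_s\otimes\Omega_t)+U_{s,t}(\Omega_s\otimes b_t),\qquad
c_{s+t}=U_{s,t}(c_s\otimes\Omega_t)+U_{s,t}(\Omega_s\otimes c_t).
\]
Taking inner products and using that $U_{s,t}$ is an isometry, the $U_{s,t}$'s disappear and the computation reduces to the four cross-terms
\[
\ip{b_s\otimes\Omega_t+\Omega_s\otimes b_t,\,c_s\otimes\Omega_t+\Omega_s\otimes c_t}.
\]
Expanding, two of the four terms vanish because $b$ and $c$ are centered ($\ip{\Omega_s,b_s}=\ip{\Omega_s,c_s}=0$, and similarly at $t$), leaving
\[
\ip{b_{s+t},c_{s+t}}=\|\Omega_t\|^{2}\,\ip{b_s,c_s}+\|\Omega_s\|^{2}\,\ip{b_t,c_t}.
\]

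Next I would use that $\Omega$ is a unit, so $\|\Omega_{s+t}\|=\|\Omega_s\|\|\Omega_t\|$; for the canonical unit (which we may take unital, as in the remarks following the definition of unital/exponential units) this gives $\|\Omega_t\|=1$ for every $t$. Consequently the identity collapses to
\[
f(s+t)=f(s)+f(t),\qquad s,t>0,
\]
where $f(t):=\ip{b_t,c_t}$. Because $b$ and $c$ are measurable sections of the Borel super product system, $f$ is a measurable additive function on $(0,\infty)$, and any such function is linear: $f(t)=tf(1)$, which is precisely \eqref{additidentity}.

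There is no real obstacle; the only care needed is the normalization of the canonical unit. If one wished to work with a non-unital canonical unit $\|\Omega_t\|^{2}=e^{\lambda t}$, the same argument applied to $g(t):=e^{-\lambda t}\ip{b_t,c_t}$ yields $g(s+t)=g(s)+g(t)$, so the cleanest statement of the lemma genuinely requires $\|\Omega_t\|=1$, which is exactly the normalization arising from the canonical unit $S_t\Omega=\Omega$ produced by the trace-preserving endomorphism semigroup.
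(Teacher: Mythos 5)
Your proof is correct and follows the same route as the paper's: expand $b_{s+t}$ and $c_{s+t}$ via the addit relation, use the isometry of $U_{s,t}$, kill the cross terms by centredness, and solve the resulting measurable Cauchy equation. Your explicit observation that the argument tacitly requires the canonical unit $\{\Omega_t\}$ to be unital ($\|\Omega_t\|=1$) is a small but genuine sharpening of what the paper leaves implicit.
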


\begin{proof} The function $t \mapsto \ip{b_t,c_t}$ is measurable, and
$ \ip{b_{s+t},c_{s+t}}$ equals $$=\ip{U_{s,t}(b_s \otimes \Omega_t) +U_{s,t}(\Omega_s \otimes b_t) ,U_{s,t}(c_s \otimes \Omega_t) +U_{s,t}(\Omega_s \otimes c_t) },$$ but this is $\ip{b_s,c_s}+\ip{b_t,c_t}$, since the addits are centered.   \end{proof}

\section{Noncommutative It\^o Integrals}

In this section we develop It\^o Integrals on spatial super product systems, with respect to a centered addit. Using It\^o Integrals, we provide a bijection between centered addits and exponential units. Throughout this section we fix a spatial super product system $(H_t, U_{s,t})$ with the canonical unit $\{\Omega_t: t\geq 0\}$. 

\begin{defn}
An adapted process is a family $x=\{x_t:t\geq0\}$ satisfying $x_t\in H_t$ for all $t\geq 0$. 
%then we will call $x$ an adapted process. 

An adapted process is simple if there exists a partition $0\leq s_0< s_1<\ldots $ of $\R_+$ into a countable family of intervals so that
$$x_t =U_{s_i, t-s_i}(x_i\otimes \Omega_{t-s_i})~~\mbox{if} ~~t \in [s_i, s_{i+1}),$$
where $x_i\in H_{s_i}$ for each $i\geq0$, and $\inf_{n\in\N} (s_{n}-s_{n-1})>0$.\end{defn}

\medskip

Let $x$ be a simple adapted process, $b$ a centred addit and $0\leq t_0\leq t_1$. Extend and redefine the partition for $x$ so that $t_0=s_m$, $t_1=s_n$ and define
$$\int_{t_0}^{t_1}{x_sdb_s}=\sum_{i=m}^{n-1}U_{s_i,s_{i+1}-s_i, t_1-s_{i+1}}(x_i\otimes b_{s_{i+1}-s_i}\otimes\Omega_{t_1-s_{i+1}}),$$ where $U_{r,s,t}:H_r\otimes H_s\otimes H_t\mapsto H_{r+s+t}$ is the canonical unitary operator well-defined by the associativity axiom (here for instance take $U_{r,s,t}= U_{r+ s , t}( U_{r ,
s} \otimes 1_{H_{t}})$) Clearly the new process $\{\int_0^t x_s db_s: t\geq 0\}$ is adapted. 

The definition of the above integral is well-defined. The fact that it does not depend on the partition with respect to which $x$ is simple,  follows from the additive property of the addit  $b$ and the multiplicative property of the unit $\{\Omega_t:t\geq 0\}$
%$U_{s,t}(b_s \otimes \Omega_t) +U_{s,t}(\Omega_s \otimes b_t) = b_{s+t}$

Here is our version of It\^o's identity.

\begin{lem} Let $x,y$ be two simple adapted processes. Then
 $$\ip{\int_{t_0}^{t_1}{x_sdb_s},\int_{t_0}^{t_1}{y_sdb_s}}=\norm{b_1}^2\int_{t_0}^{t_1}{\ip{x_s,y_s}ds}.$$
\end{lem}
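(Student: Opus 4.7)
The plan is to reduce everything to a single common partition and then exploit the isometry axiom of $U_{s,t}$ together with the centredness of the addit to kill the off-diagonal terms. First, given simple adapted processes $x$ and $y$ with their own partitions, I will refine to a common partition $t_0=s_m<s_{m+1}<\cdots<s_n=t_1$, noting that this refinement does not change either integral (this is the well-definedness remark already used to set up the definition). On this common partition
$$\int_{t_0}^{t_1}x_sdb_s=\sum_{i=m}^{n-1}U_{s_i,\,s_{i+1}-s_i,\,t_1-s_{i+1}}(x_i\otimes b_{s_{i+1}-s_i}\otimes\Omega_{t_1-s_{i+1}}),$$
and similarly for $y$. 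Expanding the inner product bilinearly produces diagonal ($i=j$) and off-diagonal ($i\ne j$) contributions which I will handle separately.

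For an off-diagonal term with $i<j$, the idea is to use associativity to re-express the $i$-th summand on the finer decomposition $(s_i,\,s_{i+1}-s_i,\,s_j-s_{i+1},\,s_{j+1}-s_j,\,t_1-s_{j+1})$ by writing $\Omega_{t_1-s_{i+1}}$ as $U_{\cdot,\cdot}(\Omega_{s_j-s_{i+1}}\otimes\Omega_{s_{j+1}-s_j}\otimes\Omega_{t_1-s_{j+1}})$; pulling this through the outer isometry gives a vector of the form $U_{s_j,\,s_{j+1}-s_j,\,t_1-s_{j+1}}(\xi\otimes\Omega_{s_{j+1}-s_j}\otimes\Omega_{t_1-s_{j+1}})$ for some $\xi\in H_{s_j}$. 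Paired against the $j$-th summand of the $y$-integral via the isometry axiom of the associated triple unitary $U_{s_j,s_{j+1}-s_j,t_1-s_{j+1}}$, the middle factor produces $\ip{\Omega_{s_{j+1}-s_j},b_{s_{j+1}-s_j}}$, which vanishes because $b$ is centred. Hence all off-diagonal terms drop.

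For a diagonal term $i=j$, the isometry axiom applied to $U_{s_i,s_{i+1}-s_i,t_1-s_{i+1}}$ factors the inner product as
$$\ip{x_i,y_i}\,\ip{b_{s_{i+1}-s_i},b_{s_{i+1}-s_i}}\,\norm{\Omega_{t_1-s_{i+1}}}^2,$$
where $\norm{\Omega_\cdot}=1$ (the canonical unit is unital; this is the same fact used implicitly in Lemma \ref{additlemma}). By Lemma \ref{additlemma} applied with $c=b$, the middle factor equals $(s_{i+1}-s_i)\norm{b_1}^2$. Summing over $i$ and recognising that for $s\in[s_i,s_{i+1})$ simplicity of both $x,y$ gives $\ip{x_s,y_s}=\ip{x_i,y_i}$, I obtain
$$\sum_{i=m}^{n-1}\ip{x_i,y_i}(s_{i+1}-s_i)\norm{b_1}^2=\norm{b_1}^2\int_{t_0}^{t_1}\ip{x_s,y_s}\,ds,$$
which is exactly the claimed identity.

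The only real subtlety is the bookkeeping when repeatedly applying associativity to align the two vectors over a finer common decomposition—conceptually a routine use of Definition \ref{superproductsystem}(i), but the one place where care is needed to ensure the $\Omega$'s line up correctly so that centredness kills the cross terms.
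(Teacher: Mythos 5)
Your proposal is correct and follows essentially the same route as the paper's proof: pass to a common refining partition, use associativity to regroup the tensor factors so that an off-diagonal cross term picks up a factor $\ip{\Omega_{s_{j+1}-s_j},b_{s_{j+1}-s_j}}=0$ by centredness, and evaluate the diagonal terms via the isometry axiom together with Lemma~\ref{additlemma}. The only difference is expository: the paper writes the off-diagonal factorisation in one line, whereas you spell out the re-bracketing of $\Omega_{t_1-s_{i+1}}$ over the finer decomposition explicitly, which is precisely the bookkeeping the paper leaves implicit.
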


\begin{proof} We may assume that the partitions for $x$ and $y$ are the same. For  $i\neq j$ and $i<j$, note that
$$\ip{x_i\otimes b_{s_{i+1}-s_i}\otimes\Omega_{t_1-s_{i+1}}, y_j\otimes b_{s_{j+1}-s_j}\otimes\Omega_{t_1-s_{j+1}}}$$
$$= \ip{x_i \otimes b_{s_{i+1}-s_i} \otimes \Omega_{s_j -s_{i+1}},y_j}\ip{\Omega_{s_{j+1}-s_j}, b_{s_{j+1}-s_j}}=0.$$ Similarly, same is the case when $j<i$. 

Thus we have
%$$\ip{x_i(b_{s_{i+1}}-b_{s_i}),y_jS_{s_j}b_{s_{j+1}-s_j}}=\ip{x_i(b_{s_{i+1}}-b_{s_i}),y_j}\ip{\Omega,S_{s_j}b_{s_{j+1}-s_j}}=0.$$
\begin{eqnarray*}\ip{\int_{t_0}^{t_1}{x_sdb_s},\int_{t_0}^{t_1}{y_sdb_s}}&=&\sum_{i=m}^{n-1}{\ip{x_i\otimes b_{s_{i+1}-s_i},y_i \otimes b_{s_{i+1}-s_i}}}\\
&=&\sum_{i=m}^{n-1}{\ip{x_i,y_i}\norm{b_{s_{i+1}-s_i}}^2}\\
&=&\sum_{i=m}^{n-1}{\ip{x_i,y_i}({s_{i+1}}-{s_i})\norm{b_1}^2}\\
&=&\norm{b_1}^2\int_{t_0}^{t_1}{\ip{x_s,y_s}ds}.
\end{eqnarray*}   \end{proof}

\begin{defn} An adapted process $x$ is said to be a continuous process if the function $t \mapsto \|x_t\|^2$ is continuous. For a simple adapted process $x$ and centred addit $b$, the noncommutative It\^o integral $\int{x_sdb_s}$ of $x$ with respect to $b$ is the continuous process $t\mapsto \int_0^t{x_sdb_s}.$
\end{defn}

We say a sequence of adapted process $\{x^n:n \in \N\}$ converges to $x$ in $L^2$ on any finite interval $[a,b]$, if $\int_a^b \|x^n_t-x_t\|^2 dt$ converges to $0$.

\begin{prop}\label{propertiesprop} Let $x$ be a continuous adapted process such that there exists $F:\R_+\to\C$ analytic with $F(0)=0$ and, for all $s,t\geq0$, $\ip{x_{s+t},U_{s,t}(x_s\otimes \Omega_t)}=\norm{x_s}^2 F'(t)$. Then on any finite interval we can approximate $x$ in the $L^2$ norm by adapted step functions.
\end{prop}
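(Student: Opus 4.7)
The plan is to approximate $x$ on the given finite interval by the natural simple adapted processes obtained by freezing $x$ at partition points and tensoring with the vacuum. Fix $[a,b]\subset\R_+$ and, for each partition $a=s_0<s_1<\cdots<s_n=b$ of mesh $\delta$, define $x^\delta_t := U_{s_i,t-s_i}(x_{s_i}\otimes \Omega_{t-s_i})$ for $t\in[s_i,s_{i+1})$. Extending outside $[a,b]$ by intervals of length bounded away from zero produces a genuine simple adapted process in the sense of the definition preceding the proposition. The target is to show that $\int_a^b\|x^\delta_t-x_t\|^2\,dt\to 0$ as $\delta\to 0$.

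The key step is the pointwise error computation. Taking the canonical unit to be unital (so $\|\Omega_u\|=1$), for $t\in[s_i,s_{i+1})$ the hypothesis specialised to the pair $(s_i,\,t-s_i)$ gives the exact value $\langle x_t,x^\delta_t\rangle=\|x_{s_i}\|^2\,F'(t-s_i)$, so
$$\|x^\delta_t-x_t\|^2 = \|x_{s_i}\|^2 - 2\|x_{s_i}\|^2\,\mathrm{Re}\,F'(t-s_i) + \|x_t\|^2.$$
The same hypothesis at $t=0$, together with the convention $U_{s,0}(y\otimes\Omega_0)=y$ induced by $H_0=\C\Omega_0$, gives $\|x_s\|^2=\|x_s\|^2F'(0)$, hence $F'(0)=1$ (choosing any $s$ with $x_s\neq 0$). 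Substituting,
$$\|x^\delta_t-x_t\|^2 = \bigl(\|x_t\|^2-\|x_{s_i}\|^2\bigr) + 2\|x_{s_i}\|^2\bigl(1-\mathrm{Re}\,F'(t-s_i)\bigr).$$

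To conclude, each summand is bounded uniformly in $t$. Since $t\mapsto\|x_t\|^2$ is continuous on the compact interval $[a,b]$ it is bounded by some $M$ and admits a modulus of continuity $\omega$, so the first summand is at most $\omega(\delta)$. Analyticity of $F$ with $F'(0)=1$ yields a constant $C$ (depending only on $F|_{[0,b-a]}$) such that $|1-F'(u)|\leq Cu$ for $u\in[0,\delta]$, bounding the second summand by $2MC\delta$. Integrating,
$$\int_a^b\|x^\delta_t-x_t\|^2\,dt \leq \bigl(\omega(\delta)+2MC\delta\bigr)(b-a),$$
which tends to $0$ as $\delta\to 0$. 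The main subtle point I anticipate is cleanly justifying the identity $F'(0)=1$: either by a definitional extension of $U_{s,t}$ to $t=0$ consistent with $H_0=\C\Omega_0$, or by a limiting argument combining continuity of $F'$ with some form of continuity of $t\mapsto U_{s,t}(x_s\otimes\Omega_t)$ at $t=0$. Once this is in place the rest of the argument is routine.
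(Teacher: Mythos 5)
Your proposal is correct and takes essentially the same approach as the paper: you both freeze $x$ at partition points, expand $\|x^\delta_t-x_t\|^2$ using the hypothesis on $\langle x_{s+t},U_{s,t}(x_s\otimes\Omega_t)\rangle$, and then invoke continuity of $t\mapsto\|x_t\|^2$ together with analyticity of $F$ and $F'(0)=1$ to drive the $L^2$ error to zero. The only cosmetic difference is that the paper passes through a Riemann-sum limit while you bound the pointwise error by a modulus of continuity and a linear estimate before integrating; the paper likewise uses $F'(0)=1$ without further comment, so your observation that it follows from evaluating the hypothesis at $t=0$ (with $H_0=\C\Omega_0$) is the intended reading.
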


\begin{proof} Pick an interval $I=[r,s]$ and set $r_0^n=r$, $r_{i+1}^n=r_i^n+(s-r)n^{-1}$ for $i=0,\ldots,n-1$. Then define a simple adapted process \begin{align*}x^n_t & = U_{r_i^n, t-r_i^n}(x_{r_i^n}\otimes \Omega_{t-r_i^n})~ \mbox{if}~x \in [r_i^n, r_{i+1}^n), \mbox{for}~ i=0,\ldots,n-1;\\
& = 0~\mbox{if} ~ t \geq s. \end{align*}
Then $\norm{x-x^n}^2_{L^2(I)}$ is
 \begin{align} &\sum_{i=0}^{n-1}\int_{r_i^n}^{r_{i+1}^n}{\norm{x_t-U_{r_i^n, t-r_i^n}(x_{r_i^n}\otimes \Omega_{t-r_i^n})}dt} \nonumber \\
  =&\sum_{i=0}^{n-1}\int_{r_i^n}^{r_{i+1}^n}{\norm{x_t}^2+\norm{x_{r_i^n}}^2-2Re\ip{x_t,U_{r_i^n, t-r_i^n}(x_{r_i^n}\otimes \Omega_{t-r_i^n})}dt} \nonumber \\
  =&\int_r^s{\norm{x_t}^2dt}-\sum_{i=0}^{n-1}{\norm{x_{r_i^n}}^2\int_{r_i^n}^{r_{i+1}^n}{2Re~F'(t-r_{i}^n)-1 dt}} \nonumber\\
  =&\int_r^s{\norm{x_t}^2dt}-\sum_{i=0}^{n-1}{\left(2Re~F\bigg(\frac{s-r}{n}\bigg)-\frac{s-r}{n}\right)\norm{x_{r_i^n}}^2}.
\label{sumequation}
 \end{align}
Since $t\mapsto \norm{x_t}^2$ is continuous and $F$ is analytic the sum in (\ref{sumequation}) tends to the Riemann integral $(2Re~F'(0)-1)\int_r^s{\norm{x_t}^2dt}$ as $n\to\infty$, and since $F'(0)=1$, we get the desired equality.   \end{proof}

Thus, if $x$ is a process satisfying the hypotheses of Proposition \ref{propertiesprop} we may define the It\^o integral of $x$ with respect to $b$ as follows. Take a sequence of adapted step functions $\{x^n\}$ converging to $x$  in the $L^2$ norm on $[t_0,t_1]$. Then, by It\^o's identity
$$\norm{\int_{t_0}^{t_1}{(x_s^n-x_s^m) db_s}}^2=\norm{b_1}^2\int_{t_0}^{t_1}{\norm{x_s^n-x_s^m}^2ds}\to 0$$
as $n,m\to \infty$, so the limit
$$\int_{t_0}^{t_1} x_s db_s:=\lim_{n\to\infty}\int_{t_0}^{t_1}{x_s^ndb_s}$$
exists. Moreover, if $y^n$ is another sequence of adapted step functions with $y^n\to x$ in $L^2$ norm on $[t_0,t_1]$, then
$$\norm{\int_{t_0}^{t_1}{(x_s^n-y_s^n)db_s}}^2=\norm{b_1}^2\int_{t_0}^{t_1}{\norm{x_s^n-y_s^n}^2ds}\to 0,$$
so the limit is independent of the chosen sequence of approximating functions. We call processes satisfying the hypotheses of Proposition \ref{propertiesprop} It\^o integrands.

The following general version of It\^o's lemma follows immediately from the very definition of It\^o integral.

\begin{lem} Let $x,y$ be two It\^o integrands, then
 $$\ip{\int_{t_0}^{t_1}{x_sdb_s},\int_{t_0}^{t_1}{y_sdb_s}}=\norm{b_1}^2\int_{t_0}^{t_1}{\ip{x_s,y_s}ds}.$$
\end{lem}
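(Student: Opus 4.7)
The plan is to reduce to the already-established It\^o identity for simple adapted processes by approximation, and then pass to the limit using joint continuity of the inner product together with Cauchy--Schwarz in $L^2$.

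First, I would invoke Proposition \ref{propertiesprop} to select sequences of simple adapted processes $(x^n)$ and $(y^n)$ such that $x^n\to x$ and $y^n\to y$ in $L^2([t_0,t_1])$. By the very construction of the It\^o integral for integrands $x$ and $y$, the vectors $\int_{t_0}^{t_1}x_s^n\,db_s$ converge in norm in $H_{t_1}$ to $\int_{t_0}^{t_1}x_s\,db_s$, and similarly for $y$. Joint continuity of the inner product then yields
$$\ip{\int_{t_0}^{t_1}x_s^n\,db_s,\int_{t_0}^{t_1}y_s^n\,db_s}\longrightarrow \ip{\int_{t_0}^{t_1}x_s\,db_s,\int_{t_0}^{t_1}y_s\,db_s}.$$

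On the other hand, the preceding lemma (the simple-process version of It\^o's identity) gives
$$\ip{\int_{t_0}^{t_1}x_s^n\,db_s,\int_{t_0}^{t_1}y_s^n\,db_s}=\norm{b_1}^2\int_{t_0}^{t_1}\ip{x_s^n,y_s^n}\,ds$$
for every $n$. To pass to the limit on the right-hand side I would write
$$\int_{t_0}^{t_1}\ip{x_s^n,y_s^n}\,ds-\int_{t_0}^{t_1}\ip{x_s,y_s}\,ds=\int_{t_0}^{t_1}\ip{x_s^n-x_s,y_s^n}\,ds+\int_{t_0}^{t_1}\ip{x_s,y_s^n-y_s}\,ds$$
and bound each piece by the Cauchy--Schwarz inequality in $L^2([t_0,t_1])$; the $L^2$ convergences $x^n\to x$ and $y^n\to y$, together with the boundedness of $\norm{y^n}_{L^2}$, force this difference to zero. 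Comparing the two limits gives the stated identity.

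There is no real obstacle here: the only subtlety is verifying that the approximating simple processes can be chosen uniformly so that $\norm{y^n}_{L^2}$ remains bounded, but this is automatic since $y^n\to y$ in $L^2$. The content of the lemma is entirely in Proposition \ref{propertiesprop} (existence of the approximations) and in the simple-process It\^o identity; the extension to general It\^o integrands is a routine density argument.
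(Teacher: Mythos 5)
Your proposal is correct and is exactly what the paper has in mind: the paper gives no explicit proof, simply stating that the general identity ``follows immediately from the very definition of It\^o integral,'' and your argument spells out that routine density/continuity step (approximate by simple processes via Proposition \ref{propertiesprop}, apply the simple-process It\^o identity, and pass to the limit using norm convergence of the integrals on the left and Cauchy--Schwarz in $L^2([t_0,t_1])$ on the right). No gap.
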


\medskip

For later use we record other properties of the It\^o integrals as a proposition below.

\begin{prop}\label{properties}
Let $x$ and $y$ be It\^o integrands.

\begin{itemize} 
 \item[(i)] $\ip{\int_{s}^{t} x_s dbs, \Omega_t}=0$ $\forall ~s \leq t$. 
 \medskip
 \item[(ii)] $\int^{s+t}_{t_0} x_s dbs=\int^{s}_{t_0} x_s dbs\otimes \Omega_t + \int^{s+t}_{s} x_s dbs$ $\forall ~t_0 \leq s,t$.
\medskip
\item[(iii)] $\int^{s+t}_s U_{s,r}(x_s \otimes y_r) db_r=U_{s,t}(x_s\otimes \int_0^t y_s db_s)$ $\forall ~s,t.$
\medskip
 \item[(iv)]  $\ip{\int_{s+t_0}^{s+t} x_s dbs, \int_{s_0}^{s} x_r db_r\otimes \Omega_t}=0$ $\forall s_0\leq s,t_0\leq t.$
\medskip
\item[(v)] $\int_s^{s+t} \Omega_t dbs=b_{s+t}-(b_s\otimes \Omega_t)$ $\forall~s,t.$ 
\end{itemize}
\end{prop}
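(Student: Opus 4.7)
The plan for Proposition \ref{properties} is uniform: prove each of (i)--(v) first for simple adapted processes, where the integral reduces to a finite sum, then extend to general It\^o integrands by approximation in the $L^2$ norm. Since every right-hand side is a bounded (multi)linear expression in the integrals, It\^o's isometry and continuity of the inner product and of the embedding $\xi \mapsto U_{s,t}(\xi \otimes \Omega_t)$ will transport each identity from simple processes to arbitrary It\^o integrands.

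For (i), on a simple process with partition $\{s_i\}$ the integral is $\sum_i U_{s_i, s_{i+1}-s_i, t-s_{i+1}}(x_i \otimes b_{s_{i+1}-s_i} \otimes \Omega_{t-s_{i+1}})$. Writing $\Omega_t$ via the unit property as $\Omega_{s_i}\otimes \Omega_{s_{i+1}-s_i}\otimes \Omega_{t-s_{i+1}}$ (after the same $U$), every summand produces a factor $\ip{b_{s_{i+1}-s_i},\Omega_{s_{i+1}-s_i}}=0$ since $b$ is centred. Part (ii) is bookkeeping: choose the approximating partitions to contain $s$ as a node, so that the defining sum splits exactly into the sums over $[t_0,s]$ and $[s,s+t]$, with the first block embedded into $H_{s+t}$ by tensoring with $\Omega_t$ (legitimate because $\Omega$ is a unit). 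Part (iii) is a direct application of the associativity axiom: term-by-term one obtains $U_{s+s_i,s_{i+1}-s_i,t-s_{i+1}}(U_{s,s_i}(x_s\otimes y_i)\otimes b_{s_{i+1}-s_i}\otimes \Omega_{t-s_{i+1}})$ on one side and $U_{s,s_i,s_{i+1}-s_i,t-s_{i+1}}(x_s\otimes y_i\otimes b_{s_{i+1}-s_i}\otimes \Omega_{t-s_{i+1}})$ on the other, which coincide. For (iv), refine the two approximating partitions to subdivide the disjoint intervals $[s_0,s]$ and $[s+t_0,s+t]$; after absorbing the $\otimes \Omega_t$ on the right into a four-fold tensor, every term in the expanded inner product places a $b$-factor from the left in a slot where the right has the $\Omega_t$-factor, giving $\ip{b_{s_{j+1}-s_j},\Omega_{s_{j+1}-s_j}}=0$. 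Finally (v) is the cleanest: the section $r\mapsto \Omega_r$ is itself simple with the single-block partition $\{s,s+t\}$, so the definition gives $\int_s^{s+t}\Omega_r \, db_r = U_{s,t}(\Omega_s\otimes b_t)$, and rearranging the addit axiom $b_{s+t}=U_{s,t}(b_s\otimes \Omega_t)+U_{s,t}(\Omega_s\otimes b_t)$ yields the claim.

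The main obstacle I expect is keeping the tensor-product bookkeeping straight in (iii) and (iv), where three- and four-fold tensor products appear and one must invoke the canonical iterated-$U$ identifications consistently on both sides in order to compare terms. Once these identifications are fixed by a single choice of parenthesisation (justified by the associativity axiom), each identity becomes purely algebraic on simple processes, and passage to the $L^2$ limit is routine because both It\^o's isometry and the embedding $\xi\mapsto U_{s,t}(\xi\otimes \Omega_t)$ are isometric, hence preserve norm convergence.
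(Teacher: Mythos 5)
Your proposal is correct and follows essentially the same route as the paper's (very terse) proof, which simply states that each identity is verified for simple adapted processes and then extended by $L^2$-limits, with the remark that (iii) additionally uses the associativity axiom. You have just filled in the term-by-term computations that the authors leave implicit.
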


\begin{proof}
All statements follow immediately, first by verifying for the simple adapted processes, and then by taking limits. For (iii) we need to use the associativity axiom of the super product system in addition.    \end{proof}

\medskip

\begin{rems} (i)
Notice that, if $b$ is an addit then $$\ip{b_{s+t},U_{s,t}(b_s\otimes \Omega_t)}=\ip{b_s\otimes \Omega_t+\Omega_s\otimes b_t,b_s\otimes \Omega_t}=\norm{b_s}^2,$$ so $b$ satisfies the hypotheses of Proposition \ref{propertiesprop} with $F(t)=t$. 

\noindent
(ii) If $x_t=\int_0^t{y_sdb_s}$ for some adapted process $y$, then $t\mapsto \|x_t\|$ is continuous and
$$\ip{x_{s+t},U_{s,t}(x_s\otimes \Omega_t)}=\norm{b_1}^2\int_0^{s}{\norm{y_r}^2}dr=\norm{x_s}^2.$$
Moreover $x_t$ is a limit of elements in $H_t$, so $x$ is an adapted process. $x$ satisfies the hypotheses of Proposition \ref{propertiesprop} with $F(t)=t$, hence is an It\^o integrand.
\end{rems}

\begin{prop}\label{QSDEprop} The non-commutative stochastic differential equation \begin{equation} u_t=\Omega_t+\int_0^{t}{u_sdb_s} \label{SDE}\end{equation} has a unique continuous solution. Moreover the solution is an exponential unit. \end{prop}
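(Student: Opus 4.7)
\emph{Proof plan.} The strategy is to solve (\ref{SDE}) by Picard iteration inside the super product system and then to verify the multiplicative property by applying uniqueness to a time-shifted version of the same SDE.

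For existence, I would set $u^{(0)}_t=\Omega_t$ and recursively $u^{(n+1)}_t=\int_0^t u^{(n)}_s\,db_s$. By the second remark after Proposition~\ref{properties}, each iterate is an It\^o integrand, so the recursion is well-posed. Iterating It\^o's identity gives, by induction, $\norm{u^{(n)}_t}^2=\norm{b_1}^{2n}t^n/n!$, while property (i) of Proposition~\ref{properties} applied repeatedly (together with a down-induction reducing the larger superscript) yields the orthogonality $\ip{u^{(m)}_t,u^{(n)}_t}=0$ for $m\neq n$. Hence the series $u_t:=\sum_{n\geq 0}u^{(n)}_t$ converges in $H_t$ with $\norm{u_t}^2=e^{\norm{b_1}^2 t}$; in particular $u$ is a continuous adapted process in the sense of the paper. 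Integrating the recursion term by term and using linearity of the It\^o integral shows that $u$ satisfies (\ref{SDE}).

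Uniqueness is a Gr\"onwall argument. If $u$ and $v$ are two continuous solutions, then $w_t:=u_t-v_t$ satisfies $w_t=\int_0^t w_s\,db_s$, and It\^o's identity gives $\norm{w_t}^2=\norm{b_1}^2\int_0^t\norm{w_s}^2\,ds$. Since $t\mapsto\norm{w_t}^2$ is bounded on compacts, iteration forces $w=0$.

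For the multiplicative property, I would fix $s\geq 0$ and compare the two $H_{s+t}$-valued adapted processes $v_t:=u_{s+t}$ and $w_t:=U_{s,t}(u_s\otimes u_t)$. Substituting (\ref{SDE}) into $v_t$ and splitting the integral via property (ii) of Proposition~\ref{properties} gives
\[ v_t=U_{s,t}(u_s\otimes\Omega_t)+\int_s^{s+t}u_r\,db_r, \]
while substituting (\ref{SDE}) into the second tensor factor of $w_t$ and applying property (iii) gives the analogous identity with integrand $U_{s,r-s}(u_s\otimes u_{r-s})$ on $[s,s+t]$. Both $v$ and $w$ therefore satisfy the same linear integral equation on the shifted interval, and a time-shifted repeat of the Gr\"onwall argument above forces $v=w$. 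Thus $u$ is a unit. Finally, pairing (\ref{SDE}) with $\Omega_t$ and using property (i) of Proposition~\ref{properties} gives $\ip{u_t,\Omega_t}=1$, so $u$ is an exponential unit.

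I expect the multiplicative identity to be the main obstacle: both sides live on the stretched interval $[s,s+t]$, and matching them cleanly requires careful bookkeeping with the associativity axiom together with properties (ii) and (iii) of Proposition~\ref{properties}. Once the shifted SDE is derived in the right form, the rest is the standard Picard existence/uniqueness scheme transplanted to the super product system setting.
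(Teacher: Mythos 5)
Your proof is correct and follows essentially the same route as the paper: Picard iteration for existence, the iterated It\^o isometry estimate for uniqueness, and a time-shift argument built on Proposition~\ref{properties}(ii)--(iii) for multiplicativity. The only superficial difference lies in packaging the multiplicative step: you compare $u_{s+t}$ with $U_{s,t}(u_s\otimes u_t)$ directly by a shifted Gr\"onwall iteration, whereas the paper glues $U_{s,r-s}(u_s\otimes u_{r-s})$ onto $u_r$ as a single piecewise process solving the original SDE and then invokes the already-proved uniqueness --- both devices reduce to the same calculation.
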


\begin{proof} Existence is given by Picard iteration. Set
 \begin{equation}x^1_t=b_t\quad\hbox{and}\quad x^{n+1}_t=\int_0^t{x^n_s db_s},\label{recursion} \end{equation}
for all $n\in\N$, $t\geq 0$. Note that (\ref{recursion}) well-defines $x^{n+1}$ since each $x^n$ satisfies the hypotheses of Proposition \ref{propertiesprop}. 
Then, thanks to It\^o's identity
$$\norm{\sum_{k=n}^m{x^k_t}}^2\leq\sum_{k=n}^m{\norm{b_1}^{2k}\frac{t^k}{k!}}$$
tends to $0$ as $m,n\to\infty$, so the series
$$u_t=\Omega_t+\sum_{n=1}^\infty{x_t^n}$$
converges. The measurability of the unit (in fact the continuity) follows from
\begin{align*} \norm{u_t -\left(u_s\otimes \Omega_{t-s}\right)}^2 & \leq \sum_{n=1}^\infty\norm{x^n_t-\left(x^n_s\otimes \Omega_{t-s}\right)}^2\\ &=\sum_{n=1}^\infty{\norm{b_1}^{2n}(t-s)^n/n!}\\ &=e^{\norm{b_1}^2(t-s)}-1 ~\forall ~0\leq s\leq t.\end{align*} Here we have used $(ii)$ of Proposition \ref{properties} and It\^o's identity.

For $m>0$, the iterated integrals satisfy
\begin{align*}\ip{x^n_s,x^{n+m}_t}&=\norm{b_1}^n\int_0^{s\wedge t}\int_0^{t_1}\cdots\int_0^{t_{n}}\ip{\Omega_{r_{n+1}},x^m_{r_{n+1}}}dr_{n+1}dr_{n}\cdots dr_1 \end{align*}
so we have the orthogonality relations $\ip{x^n_s,x^{n+m}_t}=0$. It follows that $u$ is an It\^o integrand with $F=t$, and a solution to \ref{SDE},  since
\begin{align*} \int_0^t{u_sdb_s} & =\lim_{n\to\infty}\left(\int_0^t{\Omega_t db_s+\sum_{k=1}^n{\int_0^tx^k_s}db_s}\right)\\ & =\lim_{n\to\infty}\left({b_t+\sum_{k=2}^n{x^{k+1}_t}}\right)=u_t-\Omega_t.\end{align*}

Now suppose that $u$ and $v$ are two continuous solutions of the QSDE. By iteration, we have
$$u_t-v_t=\int_0^t\int_0^{t_1}\cdots\int_0^{t_{n-1}}({u_{t_n}-v_{t_n})~db_{t_n}db_{t_{n-1}}\cdots db_{t_1}}$$
for each $n\in\N$. By continuity $M_t=\sup_{0\leq s\leq t}{\norm{u_s-v_s}}<\infty$ for each $t\geq0$ and thus
$$\norm{u_t-v_t}^2 \leq \norm{b_1}^{2n}\frac{M_t^2t^n}{n!}\to 0$$
as $n\to \infty$, thus $u=v$. 

Now we verify that $u_t$ is a unit for the super product system. Fix $s,t$ and define \begin{align*} v_r & =  u_r ~\mbox{if}~r \in (0,s)\\
& = U_{s,r}(u_s \otimes u_r) ~  \mbox{if}~r\geq s.
\end{align*} 
Then, using Proposition \ref{properties}, $ \Omega_{s+t}+\int_0^{s+t}v_r db_r$ is equal to
\begin{align*} &U_{s,t}(\Omega_s \otimes \Omega_t) + U_{s,t}(\int_0^s u_r db_r\otimes \Omega_t) +\int_s^{s+t} U_{s,r}(u_s \otimes u_r) db_r \\
=&U_{s,t}(u_s \otimes \Omega_t) +U_{s,t} (u_s \otimes \int_0^tu_r db_r) = U_{s,t}(u_s \otimes u_t)=v_{s+t}.\end{align*} Here we have used the fact that $u$ satisfies the stochastic differential equation. By the uniqueness of the solution to the equation \ref{SDE}, we get $U_{s,t}(u_s \otimes u_t)=u_{s+t}$. Hence $\{u_t\}$ is a unit.

Finally, since $\int_0^t{u_s^ndb_s}$ is orthogonal to $\Omega_t$ for each $t\geq0$, we have $\ip{u_t,\Omega_t}=1$ for all $t\geq0$, and $\{u_t\}$ is an exponential unit.    \end{proof}

For a centred addit $b$ define $\Exp_\Omega(b)$ to be the solution to the SDE (\ref{SDE}). Note that $$\norm{\Exp_\Omega(b)_t}^2=\ip{\Omega_t+\sum_{n=1}^\infty{x_t^n}, \Omega_t+\sum_{n=1}^\infty{x_t^n}}= e^{{\norm{b_1}^2t}}.$$ Moreover if $c$ is another centred addit satisfying  $$\Exp_\Omega(c)=u=\Exp_\Omega(d)$$ then 
$$0=\norm{\int_0^t{u_sd(b_s-c_s)}}^2=\norm{b_1-c_1}^2\int_0^t{\norm{u_s}^2ds},$$
so $b_1=c_1$ and hence $b=c$. Thus $\Exp_\Omega$ defines an injective map $\mathfrak{A}_\Omega(H)\to\mathfrak{U}_\Omega(H)$. In order to prove that $\Exp_\Omega$ is indeed a bijection, we explicitly provide the inverse in the following proposition.

We require the following observation on continuity. Every product system has a representation under which the units are semigroups of bounded operators (see \cite{arveson}) and since they are measurable the semigroups must be strongly continuous. It follows (for instance from the proof of Proposition \ref{HtOmega}) that the units of a product system are continuous in the following sense. Fix an arbitrary $T>0$ and for a unit $\{u_t:t\geq 0\}$ define \begin{align*}u'_t &=U_{t,T-t}(u_t\otimes \Omega_{T-t});\\ u'_{s,s+t}& = U_{s+t, T-(s+t)}(U_{s,t}(\Omega_s\otimes u_t)\otimes \Omega_{T-(s+t)}),\end{align*} for  $0\leq s,t\leq T$. Then the map $(s,t)\mapsto u'_{s,t}$ is continuous in $s,t$, for all $s,t\in [0,T]$. This also applies to super product systems, since the units of a super product system generate a product system.

\begin{prop}\label{log} For $t\geq 0$ and for a continuous  exponential unit $u$, set $$y^{i,n}_t= U_t^{2^n}(\Omega_{\frac{t}{2^n}}\otimes \cdots \otimes (u_{\frac{t}{2^n}}-\Omega_{\frac{t}{2^n}})\otimes \cdots \otimes \Omega_{\frac{t}{2^n}}),$$ with $(u_{\frac{t}{2^n}}-\Omega_{\frac{t}{2^n}}) $ at the $i-$th tensor and $U_t^{2^n}$ is the canonical unitary operator $H_{\frac{t}{2^n}}\otimes \cdots \otimes H_{\frac{t}{2^n}}\mapsto H_t$ well-defined by the associativity of the  super product system. Define $$y^n_t= \sum_{i=1}^{2^n} y^{i,n}_t,$$ then $\Log_\Omega(u)_t=\lim_{n\rightarrow \infty}y_t^n$ exists and $\{\Log_\Omega(u)_t: t \geq 0\}$ defines a centered addit.  
 \end{prop}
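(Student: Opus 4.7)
The plan is to (i) establish convergence of $y_t^n$ to a centred measurable object $b_t$, and (ii) upgrade the construction to arbitrary partitions and use that flexibility to derive the addit identity.

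\textbf{Convergence and centredness.} Set $v_\epsilon := u_\epsilon - \Omega_\epsilon$. Because $u$ is exponential, $\ip{v_\epsilon,\Omega_\epsilon}=0$, so the $2^n$ summands $y_t^{i,n}$ (which differ by which single slot carries $v_{t/2^n}$ in place of $\Omega_{t/2^n}$) are pairwise orthogonal, giving $\|y_t^n\|^2=2^n\|v_{t/2^n}\|^2$. Writing $\|u_t\|^2=e^{\lambda t}$ with $\lambda\geq 0$ (by Cauchy--Schwarz against $\Omega_t$), one has $\|v_\epsilon\|^2=e^{\lambda\epsilon}-1=O(\epsilon)$, so these norms stay bounded. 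For Cauchyness, apply the unit identity $u_{2\epsilon}=U_{\epsilon,\epsilon}(u_\epsilon\otimes u_\epsilon)$ to obtain
\[ v_{2\epsilon}=U_{\epsilon,\epsilon}(v_\epsilon\otimes\Omega_\epsilon+\Omega_\epsilon\otimes v_\epsilon+v_\epsilon\otimes v_\epsilon), \]
and insert this expansion simultaneously into each of the $2^n$ slots of $y_t^n$. The first two terms recombine into $y_t^{n+1}$; the remaining double-$v$ contributions form a sum of $2^n$ mutually orthogonal vectors (again by $v\perp\Omega$ in the untouched slots) each of norm-squared $\|v_{t/2^{n+1}}\|^4$. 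Hence $\|y_t^n-y_t^{n+1}\|^2=2^n\|v_{t/2^{n+1}}\|^4 = O(2^{-n})$ uniformly on compact $t$-intervals, so $b_t:=\lim_n y_t^n$ exists. Centrality follows because $\ip{y_t^n,\Omega_t}=2^n\ip{v_{t/2^n},\Omega_{t/2^n}}=0$ for every $n$, and measurability (in fact continuity) of $b$ follows from the continuity of $u$ noted before the proposition together with the uniform-on-compacts control on the approximants.

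\textbf{Addit identity.} The same recipe extends to arbitrary finite partitions $P=\{0=s_0<s_1<\cdots<s_k=T\}$: define
\[ y_T^P:=\sum_{i=1}^k U\bigl(\Omega_{s_1-s_0}\otimes\cdots\otimes v_{s_i-s_{i-1}}\otimes\cdots\otimes\Omega_{s_k-s_{k-1}}\bigr), \]
with $U$ the iterated associativity isometry into $H_T$. For a refinement $P'\supseteq P$, expand each $v_{s_i-s_{i-1}}$ (inside the $i$-th $P$-slot) via the unit relation as a sum over non-empty subsets of the refining sub-slots (``$v$ on chosen sub-slots, $\Omega$ elsewhere''); the singleton-subset terms, summed over $i$, recombine to $y_T^{P'}$, and the remaining multi-$v$ terms form an orthogonal remainder whose total norm-squared is bounded by $C(\lambda T)\cdot\mathrm{mesh}(P)$ via the telescoping estimate $\|v_\epsilon\|^2=e^{\lambda\epsilon}-1$. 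Hence $y_T^P$ is Cauchy along any refining sequence of vanishing mesh, and a common-refinement argument shows the limit is independent of the sequence and equals $b_T$. For any $s,t>0$, choose partitions $P_n$ of $[0,s+t]$ of mesh $\to 0$ all containing the point $s$; then $y_{s+t}^{P_n}$ splits cleanly as $U_{s,t}(y_s^{P_n^1}\otimes\Omega_t)+U_{s,t}(\Omega_s\otimes y_t^{P_n^2})$ for the induced partitions $P_n^1,P_n^2$ on $[0,s]$ and $[0,t]$, and passing to the limit yields $b_{s+t}=U_{s,t}(b_s\otimes\Omega_t+\Omega_s\otimes b_t)$.

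\textbf{Main obstacle.} The technical heart is the partition-independence step. The statement constructs $b_t$ only via the specific nested dyadic sequence, but the addit identity relates $b$ at three different times whose dyadic subdivisions are not compatible. Establishing the refinement bound $\|y_T^P-y_T^{P'}\|^2=O(\mathrm{mesh}(P))$ for arbitrary (not just 2-to-1) refinements requires carefully isolating the multi-$v$ tail of the expansion and exploiting the orthogonality of distinct support-patterns together with $\|v_\epsilon\|^2=O(\epsilon)$. Once this is in place, the addit identity is immediate by choosing partitions that split at the arbitrary point $s$.
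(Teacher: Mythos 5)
Your proof is correct and reaches the statement by a route that genuinely diverges from the paper's at the addit-identity step. For convergence the paper computes $\norm{y_t^n-y_t^m}^2 = 2^n(e^{\lambda t/2^n}-1)-2^m(e^{\lambda t/2^m}-1)$ directly, while you obtain the recursion $y_t^n=y_t^{n+1}+R_n$ by expanding $v_{2\epsilon}=U_{\epsilon,\epsilon}(v_\epsilon\otimes\Omega_\epsilon+\Omega_\epsilon\otimes v_\epsilon+v_\epsilon\otimes v_\epsilon)$ in each slot and isolating the orthogonal double-$v$ remainder; this is equivalent in content but slicker bookkeeping, and your identification of $\norm{R_n}^2=2^n\norm{v_{t/2^{n+1}}}^4$ is right. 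The substantive difference is the addit identity. The paper stays purely dyadic: it derives $U_{t,t}(b_t\otimes\Omega_t+\Omega_t\otimes b_t)=b_{2t}$ from the exact compatibility $U_{t,t}(y_t^n\otimes\Omega_t+\Omega_t\otimes y_t^n)=y_{2t}^{n+1}$, then bootstraps (via induction, a passage through rational ratios, and finally the uniform continuity of $t\mapsto b'_{s,t}$ in $H_T$) to obtain $b'_{s+t}=b'_s+b'_{s,s+t}$ for all real $s,t$. You instead pass immediately to arbitrary finite partitions and prove partition-independence via the mesh bound $\norm{y_T^P-y_T^{P'}}^2 = \sum_i\bigl(e^{\lambda\delta_i}-1-\sum_j(e^{\lambda\delta_{ij}}-1)\bigr) = O(\mathrm{mesh}(P))$, after which the addit identity drops out by choosing partitions containing the split point $s$ — no rational bootstrap and no separate continuity-extension step are needed. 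Your route is arguably more robust and avoids the somewhat delicate dyadic-to-arbitrary reduction in the paper; the price is that the mutual orthogonality of the ``support patterns'' across a general (non-uniform, non-binary) refinement, which makes the Pythagorean mesh estimate go through, has to be spelled out with some care, whereas the paper only ever compares two adjacent dyadic levels at a time. A minor point you should record explicitly: for $y_T^P$ to be well-defined, the multiple associativity isometries $H_{\delta_1}\otimes\cdots\otimes H_{\delta_k}\to H_T$ given by different bracketings must coincide, which is exactly what the associativity axiom of the super product system guarantees (the paper also invokes this tacitly when it forms $U_t^{2^n}$ and $U_{r,s,t}$).
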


\begin{proof}
Set $\norm{u_s}^2=e^{\lambda s}$ for some $\lambda \in \R$. For any $s \geq 0$, we have $$\ip{u_s-\Omega_s, \Omega_s}=0; ~\|u_s-\Omega_s\|^2=e^{\lambda s}-1.$$ 
So we have $\|y_t^{i,n}\|^2=e^{\frac{\lambda t}{2^n}}-1$ and 
$\ip{y_t^{i,n},y_t^{i',n}}=0$ if $i \neq i'$.

For arbitrarily fixed  $m >n$ and $i=1,2,\ldots, 2^{n}$ let $$J_i = \{2^{m-n}(i-1)+1,2^{m-n}(i-1)+2, \ldots, 2^{m-n}i\}.$$ Let $J(i,j)$ denote the $j$-th element of $J_i$, then \begin{align*} \ip{y^{i, n}_t, y_t^{J(i',j), m}} &=0 \quad\mbox{if}\quad i\neq i';\\ \ip{y^{i, n}_t, y_t^{J(i,j), m}} &= e^{\frac{\lambda t}{2^m}}-1\quad (1\leq i\leq 2^n,~j\in J_i).\end{align*}

Now for $m>n$, after computations we find \begin{align*} \|y^n_t-y_t^m\|^2 &= 2^n (e^{\frac{\lambda t}{2^n}}-1) +2^m (e^{\frac{\lambda t}{2^m}}-1)-2\times2^n2^{m-n}(e^{\frac{\lambda t}{2^m}}-1)\\
&= 2^n(e^{\frac{\lambda t}{2^n}}-1)-2^m(e^{\frac{\lambda t}{2^m}}-1)\\
& = \sum_{k=2}^\infty \frac{(\lambda t)^k}{k!(2^n)^{k-1}}-\sum_{k=2}^\infty \frac{(\lambda t)^k}{k!(2^m)^{k-1}},\end{align*} which converges to $0$ as $n,m\rightarrow \infty$, hence $y_t^n$ converges, and we write $\Log_\Omega(u)_t$ for the limit.

Further \begin{align*}U_{t,t} ( y^n_t\otimes \Omega_t +\Omega_t\otimes y^n_t) &= U_{t,t}(\sum_{i=1}^{2^n} y^{i,n}_t \otimes \Omega_t+\Omega_t \otimes \sum_{i=1}^{2^n} y^{i,n}_t )\\ 
&=\sum_{j=1}^{2^n+1} y_{2t}^{j, n+1} 
 =y_{2t}^{{n+1}},\end{align*} where we have used the associativity axiom. This consequently implies that 
\begin{equation}\label{Utt} U_{t,t}(b_t\otimes \Omega_t +\Omega_t\otimes b_t)=b_{2t}, ~\forall ~t\geq 0.\end{equation}
Let us fix an arbitrary $T\geq 0$, and to make notation easier, embed $\{\Log_\Omega(u)_t:0\leq t\leq T\}$ into $H_T$. Denote \begin{align*}b'_t &=U_{t,T-t}(\Log_\Omega(u)_t\otimes \Omega_{T-t});\\ b'_{s,s+t}& = U_{s+t, T-(s+t)}(U_{s,t}(\Omega_s\otimes \Log_\Omega(u)_t)\otimes \Omega_{T-(s+t)}),\end{align*} for  $0\leq s,t\leq T$. Clearly to prove $\Log_\Omega(u)$ is an addit, we only need to verify that $b'_s+b'_{s,s+t}=b'_{s+t}$ for all $s,t\geq 0$.

Using (\ref{Utt}) and induction we have $$b'_{s,s+nt}=b'_{s,s+t}+b'_{s+t,s+2t}+ \cdots +b'_{s+(n-1)t,s+nt}~\forall s+nt\leq T.$$ Manipulations lead to \begin{equation}\label{brational}b'_{s,s+\frac{m}{n}t}=b'_{s,s+t}+b'_{s+t,s+\frac{n+1}{n}t}+ \cdots + b'_{s+\frac{m-1}{n}t,s+\frac{m}{n}t} \end{equation} for all $s+\frac{m}{n}t\leq T,~n\leq m$. Now let $s =\frac{p_1}{q_1}, t= \frac{p_2}{q_2}$ be rationals. Then \begin{align*} b'_{s+t}  &= b'_{\left(\frac{p_1q_2+p_2q_1}{p_1q_2}\right)s}\\
&= b'_s + b'_{s, \frac{p_1q_2+1}{p_1q_2}s}+\cdots +b'_{\frac{p_1q_2+p_2q_1-1}{p_1q_2}s, \frac{p_1q_2+p_2q_1}{p_1q_2}s}\\
&= b'_s + b'_{s,s+ \frac{1}{p_1q_2}s}+\cdots +b'_{s+\frac{p_2q_1-1}{p_1q_2}s,s+ \frac{p_2q_1}{p_1q_2}s}\\ &=b'_s+b'_{s,t}.
\end{align*} Here we have used relation \ref{brational} twice. 

Notice that $b'_{s,t}$ is the limit of $\sum_{i=1}^{2^n} \left(u'_{\frac{i-1}{2^n},\frac{i}{2^n}}-\Omega_T\right)$ and that the convergence is uniform in $[0,T]$. Now the continuity of $\{b'_{s,t}\}$ implies the relation $b'_{s+t}= b'_s+b'_{s,t}$  for all real $s,t$ such that $s+t\leq T$. Since $T$ is arbitrary we conclude that $\Log_\Omega(u)$ is indeed an addit.
Finally, since
$\ip{y_t^n,\Omega_t}=0$ for all $n \in \N,$ we have $\ip{\Log_\Omega(u)_t, \Omega_t}=0$ for all $t\geq 0$. Hence $\Log_\Omega(u)$ is a  centered addit.   \end{proof}

\medskip

\begin{thm} $\Log_\Omega$ and $\Exp_\Omega$ are mutually inverse maps between $\mathfrak{U}_\Omega(H)$ and $\mathfrak{A}_\Omega(H)$. \end{thm}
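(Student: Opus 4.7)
I will show both composites $\Log_\Omega\circ\Exp_\Omega=\mathrm{id}_{\mathfrak{A}_\Omega(H)}$ and $\Exp_\Omega\circ\Log_\Omega=\mathrm{id}_{\mathfrak{U}_\Omega(H)}$ separately. The second direction hinges on showing that every continuous exponential unit $u$ satisfies the SDE (\ref{SDE}) with driver $b=\Log_\Omega(u)$; the uniqueness clause in Proposition \ref{QSDEprop} then forces $u=\Exp_\Omega(\Log_\Omega(u))$.

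\emph{For $\Log_\Omega\circ\Exp_\Omega=\mathrm{id}$.} Fix $b\in\mathfrak{A}_\Omega(H)$, set $\lambda=\norm{b_1}^2$, and expand $u=\Exp_\Omega(b)=\Omega+\sum_{k\geq1}x^k$ as in the proof of Proposition \ref{QSDEprop}, so $x^1=b$, the $x^k$ are pairwise orthogonal and each orthogonal to $\Omega$ (Proposition \ref{properties}(i)), with $\norm{x^k_s}^2=\lambda^k s^k/k!$. Substituting $u_{t/2^n}-\Omega_{t/2^n}=\sum_{k\geq1}x^k_{t/2^n}$ into the definition of $y^n_t$ from Proposition \ref{log} and grouping by level $k$, the $k=1$ piece equals $\sum_{i=1}^{2^n}U^{2^n}_t(\Omega\otimes\cdots\otimes b_{t/2^n}\otimes\cdots\otimes\Omega)$, which collapses to $b_t$ by an easy induction on the addit identity $U_{s,t}(b_s\otimes\Omega_t+\Omega_s\otimes b_t)=b_{s+t}$. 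The $k\geq2$ contributions form a family of mutually orthogonal vectors of total squared norm $\sum_{k\geq2}2^n\norm{x^k_{t/2^n}}^2=\sum_{k\geq2}\lambda^k t^k/(k!\,2^{n(k-1)})\to0$. Hence $y^n_t\to b_t$, i.e.\ $\Log_\Omega(\Exp_\Omega(b))=b$.

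\emph{For $\Exp_\Omega\circ\Log_\Omega=\mathrm{id}$.} Fix $u\in\mathfrak{U}_\Omega(H)$, put $b=\Log_\Omega(u)$, $\lambda=\norm{b_1}^2$. Since $\ip{u_{s+t},U_{s,t}(u_s\otimes\Omega_t)}=\norm{u_s}^2\ip{u_t,\Omega_t}=\norm{u_s}^2$, $u$ is an It\^o integrand with $F(t)=t$, and Proposition \ref{propertiesprop} yields step approximants $u^n_t=U(u_{r_{i-1}}\otimes\Omega_{t-r_{i-1}})$ on $[r_{i-1},r_i)$, $r_i=iT/n$, $\delta=T/n$, converging to $u$ in $L^2[0,T]$. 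Writing $u_{r_i}=U(u_{r_{i-1}}\otimes\Omega_\delta)+U(u_{r_{i-1}}\otimes(u_\delta-\Omega_\delta))$ and telescoping,
\begin{equation*}u_T-\Omega_T=\sum_{i=1}^n U_{r_{i-1},\delta,T-r_i}\bigl(u_{r_{i-1}}\otimes(u_\delta-\Omega_\delta)\otimes\Omega_{T-r_i}\bigr),\end{equation*}
while by definition $\int_0^T u^n_s\,db_s=\sum_i U_{r_{i-1},\delta,T-r_i}(u_{r_{i-1}}\otimes b_\delta\otimes\Omega_{T-r_i})$. Subtracting, each summand is $X_i=U(u_{r_{i-1}}\otimes w\otimes\Omega_{T-r_i})$ with $w=u_\delta-\Omega_\delta-b_\delta$, and $w\perp\Omega_\delta$ since $u$ is exponential and $b$ centred. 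Refactoring $X_i,X_j$ through the common tensor $H_{r_{i-1}}\otimes H_\delta\otimes H_{r_{j-1}-r_i}\otimes H_\delta\otimes H_{T-r_j}$ (using associativity and $u_{r_{j-1}}=U(u_{r_{i-1}}\otimes u_\delta\otimes u_{r_{j-1}-r_i})$) shows $\ip{X_i,X_j}=0$ for $i<j$, so
\begin{equation*}\norm{u_T-\Omega_T-\int_0^T u^n_s\,db_s}^2=\sum_{i=1}^n\norm{u_{r_{i-1}}}^2\norm{w}^2\leq n\,e^{\lambda T}\norm{w}^2.\end{equation*}
The crucial estimate $\norm{u_s-\Omega_s-b_s}^2=(e^{\lambda s}-1)-\lambda s=O(s^2)$ is extracted from the proof of Proposition \ref{log} by specialising $\norm{y^n_s-y^m_s}^2=2^n(e^{\lambda s/2^n}-1)-2^m(e^{\lambda s/2^m}-1)$ at $n=0$ and letting $m\to\infty$ (noting $y^0_s=u_s-\Omega_s$). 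Consequently the bound is $n\cdot e^{\lambda T}\cdot O(\delta^2)=O(\delta)\to0$, and together with $\int u^n\,db\to\int u\,db$ this gives $u_t=\Omega_t+\int_0^t u_s\,db_s$, as required.

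\emph{Main obstacle.} The two real pressure points are (a) the mutual orthogonality of the differences $X_i$, which is purely algebraic bookkeeping via associativity and the exponential/centred properties, but needs careful setup; and (b) the $O(s^2)$ rate at which $u_s-\Omega_s$ approximates $b_s$. It is precisely this quadratic smallness that absorbs the $n\sim1/\delta$ factor coming from the telescoping sum, and is really the content of the $\Log_\Omega$ construction — the whole purpose of the dyadic partitioning in Proposition \ref{log} is to produce an object $b$ that captures the first-order increment of $u$ with exactly this accuracy.
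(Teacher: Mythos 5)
Your proof is correct, but it travels a genuinely different route from the paper's. The paper's own argument is one-directional: it notes that injectivity of $\Exp_\Omega$ (established just before the theorem from the It\^o isometry) reduces the statement to $\Exp_\Omega\circ\Log_\Omega=\mathrm{id}$, and proves that by a direct expansion $\norm{u_t}^2-2\Re\ip{u_t,\Exp_\Omega(\Log_\Omega(u))_t}+\norm{\Exp_\Omega(\Log_\Omega(u))_t}^2=0$, computing each of the three quantities to be $e^{\lambda t}$. The middle inner product is obtained by reading off $\ip{u_t,\int_0^t x_s\,d\Log_\Omega(u)_s}=\lambda\int_0^t\ip{u_s,x_s}\,ds$ from the Riemann--sum definition of the integral. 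You instead prove both composites from scratch: $\Log_\Omega\circ\Exp_\Omega=\mathrm{id}$ by substituting the Picard series and killing the $k\geq2$ terms by a $2^{-n(k-1)}$ decay, and $\Exp_\Omega\circ\Log_\Omega=\mathrm{id}$ by showing directly that $u$ solves the SDE with driver $b=\Log_\Omega(u)$ and invoking the uniqueness clause of Proposition \ref{QSDEprop}. The pivot in your second direction is exactly the observation you flag: the quadratic smallness $\norm{u_\delta-\Omega_\delta-b_\delta}^2=(e^{\lambda\delta}-1)-\lambda\delta=O(\delta^2)$, extracted by taking $n=0$ and $m\to\infty$ in the norm computation of Proposition \ref{log}, is precisely what absorbs the $n\sim T/\delta$ factor in the telescoping sum. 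Both arguments are sound; the paper's is shorter because it never needs $\Log\circ\Exp=\mathrm{id}$, while yours makes the SDE characterization explicit and gives an independent sanity check on the Picard series. One small thing worth noting for your $\Log\circ\Exp$ direction: the identification of the $k=1$ contribution with $b_t$ is the relation (\ref{Utt}) together with its iterate, exactly as used in the paper's Proposition \ref{log} — you may as well cite that line rather than re-derive it.
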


\begin{proof} Since $\Exp_\Omega:\mathfrak{A}_\Omega(H)\to\mathfrak{U}_\Omega(H)$ is injective we need only show that $\Exp_\Omega(\Log_\Omega(u))=u$ for all exponential units $u$. It suffices to show that
$$\norm{u_t}^2-2Re\ip{u_t,\Exp_\Omega(\Log_\Omega(u))_t}+\norm{\Exp_\Omega(\Log_\Omega(u))_t}^2=0.$$
Set $\norm{u_t}^2=e^{\lambda t}$, and let $y_t^{i,n}$ be as defined in proposition \ref{log}.
Then 
 $$\norm{\Log_\Omega(u)_t}^2= \lim_{n\rightarrow \infty}\sum_{i=1}^{2^n}\|y_t^{i,n}\|^2
=\lim_{n\rightarrow \infty} 2^n\left(e^{\frac{\lambda t}{2^n}}-1\right)=\lambda t,$$ and 
hence $\norm{\Exp_\Omega(\Log_\Omega(u))_t}^2=e^{\lambda t}.$ 
Similarly
$$\ip{u_t,\Log_\Omega(u)_t}=\lim_{n\rightarrow \infty}\sum_{i=1}^{2^n}\ip{u_t,y_t^{i,n}}
=\lim_{n\rightarrow \infty} 2^n\left(e^{\frac{\lambda t}{2^n}}-1\right)=\lambda t.$$
For any integrand $x$, $\ip{u_t,\int_0^tx_sd\Log_\Omega(u)_s}$ is equal to
\begin{align*}
 %\ip{u_t,\int_0^tx_sd\Log_\Omega(u)_s}&
 &\lim_{n\to\infty}\sum_{k=0}^{n-1}\ip{u_t, U_{\frac{kt}{n},\frac{t}{n}, t-\frac{(k+1)t}{n}}\left(x_{\frac{kt}{n}}\otimes \Log_\Omega(u)_{\frac{t}{n}}\otimes \Omega_{t-\frac{(k+1)t}{n}}\right)} \\
 =&\lim_{n\to\infty}\sum_{k=0}^{n-1}\ip{u_{kt/n},x_{kt/n}}\ip{u_{t/n},\Log_\Omega(u)_{t/n}}\\
 =&\lim_{n\to\infty}\sum_{k=0}^{n-1}\ip{u_{kt/n},x_{kt/n}}\lambda t/n =\lambda\int_0^t{\ip{u_s,x_s}ds}
\end{align*}
Thus in the notation of Proposition \ref{QSDEprop},
$$\ip{u_t,\Exp_\Omega(\Log_\Omega(u))_t}=1+\sum_{k=1}^\infty{\ip{u_t,x^k_t}}=\sum_{k=0}^\infty{\lambda^nt^n/n!}=e^{\lambda t}$$ as required.   \end{proof}

\begin{rem}\label{additsindex1} Units in a super product system gives rise to the covariance function, and we can associate an index, an isomorphic invariant, in precisely the same way as for product systems.  $\mathfrak{A}_\Omega(H)$ forms a Hilbert space with respect to the inner product $\ip{b,b'}=\ip{b_1,b'_1}$. The dimension of $\mathfrak{A}_\Omega(H)$ is equal to the index of the super product system, since we have $$\ip{\Exp_\Omega(b)_t, \Exp_\Omega(b')_t}=e^{\ip{b_1,b'_1}t}~\forall ~b,b' \in \mathfrak{A}_\Omega(H).$$
\end{rem}

\begin{rem}\label{additsindex} In particular the index of a (spatial) product system is equal to the dimension of the centered addits with respect to any fixed unit. In some examples of \en-semigroups on type I factors it is difficult to describe the units, but it may be easier to compute the addits. 

 This is the case for the CAR flow on $B(\Gamma_a(L^2((0,\infty),\k^\C)$ of rank $\dim(\k)$.  The CAR flow of rank $\dim(\k)$ is the \en-semigroup satisfying $$\theta_t(a(f)))=a(T_tf)\quad \forall f \in L^2((0,\infty),\k^\C),$$ where $T$ is the unilateral shift (see Example \ref{clifford}). It is well-known that CAR flow of rank $\dim(\k)$  is conjugate to the CCR flow of the same rank, hence completely spatial with index $\dim(\k)$.  Still, the units for the CAR flows are not known except the vacuum unit. 

 We can compute the addits for CAR flow easily. Setting $H=\Gamma_a(L^2((0,\infty),\k^\C)$, $H_t=\Gamma_a(L^2((0,t),\k^\C)$. Define unitary operators $U_t:H_t\otimes H\mapsto H$ by \begin{align*}U_t(\xi^t_1\wedge\xi^t_2\cdots \wedge\xi^t_n)\otimes (\xi_1\wedge\xi_2\cdots \wedge\xi_m))&\\=T_t\xi_1\wedge T_t\xi_2\cdots \wedge T_t\xi_m \wedge \xi^t_1\wedge\xi^t_2\cdots \wedge\xi^t_n.& \end{align*}  Then we have $\theta_t(X)=U_t\left( 1_{H_t}\otimes X\right)U_t^*.$  Hence the space of intertwiners  for $\gamma_t$ is given by $\{T_{\xi_t}:\xi_t\in H_t \}$ with \begin{equation}\label{T}T_{\xi_t}(\xi)=U_t(\xi_t\otimes \xi),~\forall \xi \in H.\end{equation}  The product system of $\gamma_t$ is isomorphic to $(H_t, U_{s,t})$ with \begin{align*} U_{s,t}(\xi^s_1\wedge\xi^s_2\cdots \wedge\xi^s_n)\otimes (\xi^t_1\wedge\xi^t_2\cdots \wedge\xi^t_m))&\\=T_s\xi^t_1\wedge T_s\xi^t_2\cdots \wedge T_s\xi^t_m \wedge \xi^s_1\wedge\xi^s_2\cdots \wedge\xi^s_n.&\end{align*}  The space of centered addits $\mathfrak{A}_\Omega(H)$ is given by $\{x1_{0,t}: x \in \k^\C\}$ (see Corollary \ref{cliffordaddits}). This proves that the index of $\gamma_t$  is $\dim(\k)$. 
\end{rem}

%%%%%%%%%%%%%%%%%%%%%%%%%%%%%%
%%%%%%%%%%%%%%%%%%%%%%%%%%%%%%%%

\section{The noncommutative white noise}

In this section we show that every \en-semigroup $\{\alpha_t; t \geq 0\}$ on a \twoone factor $\m$ has an associated noncommutative white noise. We define unital and additive cocycles for the noise and collect some facts which will be useful in the sequel.

\begin{defn} Let $\mathcal{A}_1,\mathcal{A}_2\subseteq \mathrm{B}$ be *-algebras and suppose $\varphi$ is a positive linear functional on $\mathrm{B}$. $\mathcal{A}_1$ and $\mathcal{A}_2$ are said to be $\varphi$-independent if 
$$\varphi(xy)=\varphi(x)\varphi(y) \quad\hbox{for~all}\quad x\in\mathcal{A}_1,~y\in\mathcal{A}_2.$$
\end{defn}

\begin{prop} Let $U$ be a gauge cocycle for $\alpha$. Then $U_t$ is independent of $\alpha_t(\m)$ in the sense that
 $$\tau(U_t\alpha_t(x))=\tau(U_t)\tau(\alpha_t(x))\quad\hbox{for~all}\quad x\in\m,~t\geq0.$$
\end{prop}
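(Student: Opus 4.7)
The plan is to exploit the adaptedness condition $U_t \in \factor_t = \factor \cap \alpha_t(\factor)'$ together with the existence of the unique trace-preserving conditional expectation $E_t : \factor \to \alpha_t(\factor)$. I will show that $E_t(U_t) = \tau(U_t)\mathbf{1}$ and then deduce the claim immediately.

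First I would recall that because $\alpha_t$ is a unital $*$-endomorphism of $\factor$, the image $\alpha_t(\factor)$ is again isomorphic to the II$_1$ factor $\factor$; in particular it is a (sub)factor, so its center inside itself is trivial. Also, since $\tau$ is the unique tracial state on $\factor$, $\tau \circ \alpha_t = \tau$, and there is a unique trace-preserving conditional expectation $E_t : \factor \to \alpha_t(\factor)$ characterized by $\tau(E_t(y)z) = \tau(yz)$ for all $y \in \factor$, $z \in \alpha_t(\factor)$.

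Next I would argue that $E_t(U_t)$ lies in the center of $\alpha_t(\factor)$. Indeed, by the adaptedness condition $U_t \in \alpha_t(\factor)'$, so $U_t z = z U_t$ for every $z \in \alpha_t(\factor)$. Applying $E_t$ and using that $E_t$ is an $\alpha_t(\factor)$-bimodule map gives
\begin{equation*}
E_t(U_t)z = E_t(U_t z) = E_t(z U_t) = z E_t(U_t) \qquad (z \in \alpha_t(\factor)).
\end{equation*}
Since $\alpha_t(\factor)$ is a factor this forces $E_t(U_t) \in \C\mathbf{1}$, and the trace-preserving property together with $E_t(U_t) \in \C\mathbf{1}$ pins it down to $E_t(U_t) = \tau(E_t(U_t))\mathbf{1} = \tau(U_t)\mathbf{1}$.

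Finally, for any $x \in \factor$, using $\alpha_t(x) \in \alpha_t(\factor)$ and the defining property of $E_t$,
\begin{equation*}
\tau(U_t \alpha_t(x)) = \tau(E_t(U_t)\,\alpha_t(x)) = \tau(U_t)\,\tau(\alpha_t(x)),
\end{equation*}
which is the desired independence. No serious obstacle is anticipated here; the only subtle point is verifying that $\alpha_t(\factor)$ is indeed a factor (so that its relative commutant in itself is $\C\mathbf{1}$), but this is immediate from the fact that $\alpha_t$ is an injective normal unital $*$-endomorphism of a II$_1$ factor onto $\alpha_t(\factor)$.
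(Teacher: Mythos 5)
Your proof is correct, but it takes a genuinely different route from the paper. The paper's argument is representation-theoretic: it passes to $H = L^2(\m)$, uses the canonical unit $S_t$ (which satisfies $S_t\Omega = \Omega$ and $S_t x = \alpha_t(x)S_t$), and computes $\tau(U_t\alpha_t(x)) = \ip{\Omega, S_t^*U_tS_t\, x\Omega} = e^{tc_*(1,U)}\tau(x) = \tau(U_t)\tau(\alpha_t(x))$, where the crucial input is that $S_t^*U_tS_t$ is a scalar of the form $e^{tc_*(1,U)}$ (a fact established earlier for $\mu$nits/units). By contrast, you work entirely inside the algebra and never invoke the canonical unit or the covariance function: you use only that the trace-preserving conditional expectation $E_t : \m \to \alpha_t(\m)$ is an $\alpha_t(\m)$-bimodule map, that $U_t \in \alpha_t(\m)'$ forces $E_t(U_t)$ into the center of the factor $\alpha_t(\m)$, and that the trace pins $E_t(U_t) = \tau(U_t)\mathbf{1}$. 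Your approach is more self-contained and arguably more conceptual, as it exposes the proposition as a direct consequence of the adaptedness condition via conditional expectations, whereas the paper's proof sits more naturally within its development of units and covariance functions (and immediately identifies $\tau(U_t)$ with the exponential $e^{tc_*(1,U)}$, which is used later). Both are short and correct.
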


\begin{proof} We simply compute
 \begin{eqnarray*} \tau(U_t\alpha_t(x))&=&\ip{\Omega,U_t\alpha_t(x)\Omega}=\ip{\Omega,U_tS_tx\Omega}\\
  &=& \ip{\Omega,S_t^*U_tS_tx\Omega}=\ip{\Omega,e^{tc_*(1,U)}x\Omega}\\
&=&e^{tc_*(1,U)}\tau(x)=\tau(U_t)\tau(\alpha_t(x))
 \end{eqnarray*}
for all $t\geq0$, $x\in\m$.   \end{proof}

This property will be called the future independence property of gauge cocycles. Let $\mathcal{A}_{t}$ be the *-algebra generated by elements
 $$\{U_s:~U\in G(\alpha),~0\leq s\leq t\}.$$
A simple induction argument shows that $\mathcal{A}_t$ is the linear span of elements of the form
$$U^1_{s_1}\alpha_{s_1}(U^2_{s_2}\alpha_{s_2}(\cdots \alpha_{s_{n-1}}(U^n_{s_n})\cdots))$$
where $n\in\N$, $U^1,\ldots U^n\in G(\alpha)$, $s_1,\ldots,s_n\in \R_+$ and $s_1+\ldots+s_n\leq t$. Let $\alg_{0,t}$ be the ultraweak closure of $\mathcal{A}_t$ and set $\alg_{s,s+t}:=\alpha_s(\alg_{0,t})$. 

\begin{prop}  $(\alg_{s,t})_{0\leq s\leq t}$ is a covariant filtration of von Neumann algebras for $\alpha$. Moreover, when $t_1\leq s_2$ the algebras $\alg_{s_1,t_1}$ and $\alg_{s_2,t_2}$ are independent.
\end{prop}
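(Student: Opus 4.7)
The plan is to verify three things: monotonicity of $(\alg_{s,t})$, covariance under $\alpha$, and the independence statement. The first two are largely bookkeeping, and the third is the content.

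Covariance is immediate from the definitions: for any $r \geq 0$,
$$\alpha_r(\alg_{s,t}) = \alpha_r\alpha_s(\alg_{0,t-s}) = \alpha_{r+s}(\alg_{0,t-s}) = \alg_{r+s,r+t}.$$
For monotonicity, the inclusion $\alg_{0,s}\subseteq \alg_{0,t}$ for $s\leq t$ is obvious from the definition of $\mathcal{A}_t$, so by combining with covariance it suffices to show $\alg_{r,t}\subseteq \alg_{0,t}$ for $r\leq t$, i.e., $\alpha_r(\alg_{0,t-r})\subseteq \alg_{0,t}$. The cocycle identity gives $\alpha_r(V_s)=V_r^*V_{r+s}$ for any $V\in G(\alpha)$, and since both $V_r$ and $V_{r+s}$ lie in $\mathcal{A}_{r+s}\subseteq \mathcal{A}_t$, every generator of $\mathcal{A}_{t-r}$ maps into $\alg_{0,t}$ under $\alpha_r$. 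Hence $\alpha_r(\mathcal{A}_{t-r})\subseteq \alg_{0,t}$, and since $\alpha_r$ is normal and $\alg_{0,t}$ is ultraweakly closed, the inclusion extends from $\mathcal{A}_{t-r}$ to its ultraweak closure $\alg_{0,t-r}$.

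For independence, monotonicity reduces matters to showing $\alg_{0,s_2}$ and $\alg_{s_2,t_2}$ are independent. Begin on the generating *-algebras. A typical generator of $\mathcal{A}_{s_2}$ has the form
$$x = U^1_{r_1}\alpha_{r_1}\bigl(U^2_{r_2}\alpha_{r_2}(\cdots \alpha_{r_{n-1}}(U^n_{r_n})\cdots)\bigr),\qquad r_1+\cdots+r_n\leq s_2,$$
and $y\in \alg_{s_2,t_2}$ has the form $y=\alpha_{s_2}(y')$ with $y'\in \alg_{0,t_2-s_2}\subseteq \m$. Writing $\alpha_{s_2}=\alpha_{r_1}\circ\alpha_{s_2-r_1}$ and pulling the outermost $\alpha_{r_1}$ out yields
$$xy = U^1_{r_1}\alpha_{r_1}\bigl(z\cdot \alpha_{s_2-r_1}(y')\bigr),\qquad z=U^2_{r_2}\alpha_{r_2}(\cdots).$$
Applying the future independence property to the leading cocycle $U^1$ gives $\tau(xy)=\tau(U^1_{r_1})\,\tau(z\cdot\alpha_{s_2-r_1}(y'))$ (using trace invariance under $\alpha_{r_1}$), and the right factor has the same shape with one fewer cocycle, so iteration produces $\tau(xy)=\tau(U^1_{r_1})\cdots\tau(U^n_{r_n})\tau(y')$. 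Specialising to $y'=1$ and $x=1$ separately gives $\tau(x)=\prod_i\tau(U^i_{r_i})$ and $\tau(y)=\tau(y')$, so $\tau(xy)=\tau(x)\tau(y)$ on the generators.

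To promote independence from the generating *-algebras to the ultraweak closures, I would combine normality of $\tau$ with Kaplansky density: approximate $x\in \alg_{0,s_2}$ by a bounded net $(x_\lambda)\subset \mathcal{A}_{s_2}$ converging ultraweakly to $x$, so that $\tau(x_\lambda y)\to\tau(xy)$ and $\tau(x_\lambda)\to\tau(x)$ for each fixed generator $y$; a second bounded approximation in the $y$-variable then finishes the argument. The main technical point here is this bounded-net passage to the closure, since the joint ultraweak continuity of multiplication is only available on bounded sets; Kaplansky density is what makes the extension clean.
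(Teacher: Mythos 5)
Your proof is correct and follows essentially the same strategy as the paper: covariance is immediate from the definitions, monotonicity comes from the cocycle identity $\alpha_r(V_s)=V_r^*V_{r+s}$ applied to generators, and independence is established on generating elements by iterating the future independence property and then extended to the ultraweak closures. The one small difference is at that last extension step: since one can hold one variable fixed and use that $x\mapsto\tau(xy)$ is itself ultraweakly continuous, the Kaplansky density / bounded-net argument you invoke is unnecessary overhead (though not incorrect) --- the paper simply cites ultraweak continuity of $\tau$.
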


\begin{proof} The covariance easily follows as
$$\alpha_r(\alg_{s,t})=\alpha_r\circ\alpha_{s}(\alg_{0,t-s})=\alg_{r+s,r+t}.$$
We need to show that if $[s_1,t_1]\subseteq [s_2,t_2]$ then $\alg_{s_1,t_1}\subseteq \alg_{s_2,t_2}$, i.e. that $\alpha_{s_1-s_2}\alg_{0,t_1-s_1}\subseteq \alg_{0,t_2-s_2}$. For this we note that when $0\leq p\leq t_1-s_1$ we have $\alpha_{s_1-s_2}(U_{p})=U_{s_1-s_2}^*U_{p+s_1-s_2}$ and this is in $\alg_{0,t_2-s_2}$ as $0\leq s_1-s_2\leq t_2-s_2$ and $0\leq p+s_1-s_2\leq t_1-s_2\leq t_2-s_2$. Since these elements generate $\alpha_{s_1-s_2}(\alg_{0,t_1-s_1})$ the inclusion follows.

Now let $t_1\leq s_2$. By ultraweak continuity of $\tau$, the final part of the theorem follows if $\alpha_{s_1}(\mathcal{A}_{t_1-s_1})$ and $\alg_{s_2,t_2}$ are independent. Hence it suffices to show that if $x\in\factor$, $n\in\N$, $U^1,\ldots,U^n\in G(\alpha)$ and $u_1,\ldots,u_n\in\R_+$ with $u_1+\ldots+u_n\leq t_1-s_1$, the elements $$\alpha_{s_1}(U^1_{u_1}\alpha_{u_1}(U^2_{u_2}\alpha_{u_2}(\cdots \alpha_{u_{n-1}}(U^n_{u_n})\cdots)))\quad\text{and}\quad\alpha_{s_2}(x)$$ are independent.
Since $u_1\leq t_1-s_1\leq s_2-s_1$ the future independence property of gauge cocycles gives us
\begin{align*}
&\tau(\alpha_{s_1}(U^1_{u_1}\alpha_{u_1}(U^2_{u_2}\alpha_{u_2}(\cdots \alpha_{u_{n-1}}(U^{n}_{u_{n}})\cdots))) \alpha_{s_2}(x))\\
&=\tau(U^1_{u_1}\alpha_{u_1}(U^2_{u_2}\alpha_{u_2}(\cdots \alpha_{u_{n-1}}(U^{n}_{u_{n}})\cdots)) \alpha_{s_2-s_1}(x))\\
&=\tau(U^1_{u_1})\tau(\alpha_{u_1}(U^2_{u_2}\alpha_{u_2}(\cdots \alpha_{u_{n-1}}(U^{n}_{u_{n}})\cdots)) \alpha_{s_2-s_1}(x)).
\end{align*}
Moreover, as $u_k\leq s_2-s_1-\sum_{j=1}^{k-1}{u_j}$ for each $1\leq k\leq n$, we can continue inductively to obtain
\begin{align*}
&\tau(\alpha_{s_1}(U^1_{u_1}\alpha_{u_1}(U^2_{u_2}\alpha_{u_2}(\cdots \alpha_{u_{n-1}}(U^{n}_{u_{n}})\cdots))) \alpha_{s_2}(x))\\
&=\tau(U^1_{u_1})\tau(U^2_{u_2})\cdots \tau(U^n_{u_n})\tau(\alpha_{s_2-s_1-\sum_{j=1}^{n-1}{u_j}}(x)).
\end{align*}
Finally, we may use the future independence property and invariance of $\tau$ under $\alpha$ to note that
$\tau(U^1_{u_1})\tau(U^2_{u_2})\cdots \tau(U^n_{u_n})$ $$=\tau(\alpha_{s_1}(U^1_{u_1}\alpha_{u_1}(U^2_{u_2}\alpha_{u_2}(\cdots \alpha_{u_{n-1}}(U^{n}_{u_{n}})\cdots))))$$
and $\tau(\alpha_{s_2-s_1-\sum_{j=1}^{n-1}{u_j}}(x))=\tau(\alpha_{s_2}(x))$.   \end{proof}

\begin{rem} In practice, when we define an \en-semigroup, other, quite natural filtrations present themselves. By design, the collection of cocycles adapted to $(\alg_{s,t})_{0\leq s\leq t}$ is the gauge group $G(\alpha)$. On the other hand it is not clear if the collection of cocycles adapted to another filtration carries any meaningful information about the cocycle conjugacy class of $\alpha$. \end{rem}

\begin{defn}\label{whitenoisedef} Set $\alg=\bigvee_{0\leq t} \alg_{0,t}$, $\varphi=\tau|_\alg$ and $\sigma=\alpha|_\alg$, then we call the quintuple $(\alg,\varphi,\sigma,(\alg_{s,t})_{0\leq s\leq t})$ the noncommutative white noise associated to the \en-semigroup $\alpha$.

Define $G^\alpha_{t}$ to be the subspace of $H$ generated by $\alg_{0,t}$.  
We denote the projection onto $G^\alpha_t$ by $P_t$. Finally, let $H^t:=S_tH$, the subspace generated by $\alpha_t(\factor)$. \end{defn}

\begin{rem} Similar forms of noncommutative white noise have been well studied - see the review article \cite{kostler}, or section 6.5 of \cite{bernoulli}. The key difference being the use of a \emph{group} of \emph{automorphisms}. This gives good continuity properties for the family of conditional expectations $\expectation_{s,t}:\alg\to\alg_{s,t}$ (see Lemma 3.1.5 in \cite{bernoulli}), allowing the development of noncommutative It\^o integrals.
\end{rem}

\begin{lem}\label{multlemma} The multiplication $\alg_{0,t}\times H^t\to H$, $(x,\xi)\mapsto x\xi$ extends to a continuous bilinear map
$$G^\alpha_t\times H^t\to H, \qquad (\xi,\eta)\mapsto \xi\eta.$$ 
Moreover for $\xi_1,\xi_2\in G^\alpha_t$ and $\eta_1,\eta_2\in H^t$ we have
$$\ip{\xi_1\eta_1,\xi_2\eta_2}=\ip{\xi_1,\xi_2}\ip{\eta_1,\eta_2}.$$
\end{lem}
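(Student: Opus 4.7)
The plan is to bootstrap the ordinary operator action $\alg_{0,t}\times H^t\to H$ to an isometric linear map on the Hilbert space tensor product $G^\alpha_t\overtimes H^t$.

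First I would establish the scalar independence
\[ \tau(a\alpha_t(y))=\tau(a)\tau(y) \qquad (a\in\alg_{0,t},\ y\in\m). \]
By ultraweak continuity of $\tau$ it suffices to verify this for $a$ in the generating $*$-algebra $\mathcal{A}_t$, and for such $a=U^1_{u_1}\alpha_{u_1}(U^2_{u_2}\alpha_{u_2}(\cdots\alpha_{u_{n-1}}(U^n_{u_n})\cdots))$ with $u_1+\cdots+u_n\leq t$ one rewrites $a\alpha_t(y)=U^1_{u_1}\alpha_{u_1}(a'\alpha_{t-u_1}(y))$ and strips off the factors inductively using the future-independence property of gauge cocycles together with the $\alpha$-invariance of $\tau$. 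This is essentially the same computation as in the preceding proposition, the only point being that $y$ ranges over all of $\m$, not merely over an element of some $\alg_{t,s}$.

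With the independence in hand, for $x_1,x_2\in\alg_{0,t}$ and $y_1,y_2\in\m$ I would chain the tracial property, the $\alpha_t$-invariance of $\tau$ and the independence identity to obtain
\begin{align*}
\ip{x_1\alpha_t(y_1)\Omega,x_2\alpha_t(y_2)\Omega}
&=\tau(\alpha_t(y_1^*)x_1^*x_2\alpha_t(y_2))\\
&=\tau(x_1^*x_2\alpha_t(y_2y_1^*))\\
&=\tau(x_1^*x_2)\tau(y_2y_1^*)\\
&=\ip{x_1\Omega,x_2\Omega}\,\ip{\alpha_t(y_1)\Omega,\alpha_t(y_2)\Omega}.
\end{align*}
Since $\Omega$ is separating for $\m\supseteq\alg_{0,t}\cup\alpha_t(\m)$, the rule $V_0(x\Omega\otimes\alpha_t(y)\Omega):=x\alpha_t(y)\Omega$ well-defines a linear map on the algebraic tensor product $\alg_{0,t}\Omega\otimes_{\mathrm{alg}}\alpha_t(\m)\Omega$, and the displayed identity says precisely that $V_0$ preserves the pre-Hilbert inner product inherited from $G^\alpha_t\otimes H^t$. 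Hence $V_0$ extends uniquely to an isometry $V:G^\alpha_t\overtimes H^t\to H$, and reading $V$ bilinearly yields the required jointly continuous map $(\xi,\eta)\mapsto V(\xi\otimes\eta)=:\xi\eta$, with the multiplicativity of the inner product being immediate from the tensor product construction.

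The main obstacle is the independence step. It is not quite the statement of the preceding proposition, whose conclusion only asserts that $\alg_{s_1,t_1}$ and $\alg_{s_2,t_2}$ are independent; one must revisit that proof and observe that the elements $\alpha_{s_2}(x)$ appearing there are allowed to range over all of $\m$, so the same inductive argument in fact yields independence of $\alg_{0,t}$ from $\alpha_t(\m)$ as a whole.
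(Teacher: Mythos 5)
Your proposal is correct and rests on the same key fact as the paper's proof, namely the $\tau$-independence of $\alg_{0,t}$ and $\alpha_t(\m)$, which you are right to observe is exactly what the proof (not just the statement) of the preceding proposition delivers. The only difference is presentational: the paper first fixes $x\in\alg_{0,t}$ and extends over $\eta\in H^t$, then runs a Cauchy-sequence argument in $\xi\in G^\alpha_t$, whereas you package both extensions at once by building an isometry $G^\alpha_t\overtimes H^t\to H$ on the dense algebraic tensor product; this is slightly more economical but is not a genuinely different route.
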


\begin{proof} We first show that, for any $\eta\in H^t$ and $x\in\alg_{0,t}$, we have
$$\norm{x\eta}_H^2=\tau(x^*x)\norm{\eta}_H^2.$$
indeed, pick $y_i\in\alpha_t(\factor)$ with $y_i\to \eta$. Then as $x$ is an operator on $H$, we have
$\norm{x\eta-xy_i}_H\leq \norm{x}\norm{\eta-y_i}_H\to0$, so $xy_i\to x\eta$. Hence
$$\norm{x\eta}_H^2=\lim_{i\to\infty}\tau(x^*y_i^*y_ix)=\tau(x^*x)\norm{\eta}_H^2$$
by independence.

Now pick an arbitrary $\xi\in G^\alpha_t$ and $\eta\in H^t$. Since $G^\alpha_t$ is the norm closure of $\alg_{0,t}$ in $H$ there is a sequence $x_i\in\alg_{0,t}$ tending to $\xi$. The sequence $x_i\eta$ is Cauchy, since
$$\norm{(x_i-x_j)\eta}_H^2=\tau((x_i-x_j)^*(x_i-x_j))\norm{\xi}_H^2\to 0.$$
We call the limit $\xi\eta$. By continuity of addition and scalar multiplication the map $(\xi,\eta)\mapsto \xi\eta$ is bilinear and by definition it extends the usual multiplication on $\alg_{0,t}\times H^t$. The norm of the element $\xi\eta$ is given by
$$\norm{\xi\eta}_H^2=\lim_{i\to\infty}\norm{x_i\eta}_H^2=\lim_{i\to\infty}\tau(x_i^*x_i)\norm{\eta}_H^2=\norm{\xi}_H^2\norm{\eta}_H^2,$$
hence the multiplication is continuous.

If $x_1,x_2\in \alg_{0,t}$ and $y_1,y_2\in\alpha_t(\factor)$ then we have
$$\ip{x_1y_1,x_2y_2}_H=\tau(y_1^*x_1^*x_2y_2)=\tau(x_1^*x_2)\tau(y_1^*y_2)=\ip{x_1,x_2}_H\ip{y_1,y_2}_H.$$
The final claim of the lemma now follows from standard limiting arguments.   \end{proof}

As a consequence to the above lemma we have, for $\xi\in G^\alpha_t$ and $\eta\in H^t$,
$\ip{\xi,\eta}=\ip{\xi,\Omega}\ip{\Omega,\eta}$. The following corollary follows immediately.

\begin{cor}
$(G^\alpha_t, U_{s,t})$ forms a completely spatial product system with $$U_{s,t}(\xi_s \otimes \xi_t)= \xi_s S_s\xi_t,~~ \forall ~ \xi_s \in G^\alpha_s, \xi_t \in G^\alpha_t,$$ and $\Omega_t=\Omega$ for all $t \geq 0$.  
\end{cor}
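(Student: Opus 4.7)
My plan is to verify the product system axioms for $(G^\alpha_t,U_{s,t})$ and then establish complete spatiality via gauge-cocycle units. Each step is largely a direct consequence of Lemma \ref{multlemma} combined with a single factorisation trick on words in the cocycle generators.

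For well-definedness and isometricity of $U_{s,t}$, observe that $S_s\xi_t\in S_sH=H^s$, so by Lemma \ref{multlemma} the product $\xi_s\cdot(S_s\xi_t)$ is defined and
\[\ip{\xi_s S_s\xi_t,\,\xi'_s S_s\xi'_t}=\ip{\xi_s,\xi'_s}\ip{S_s\xi_t,S_s\xi'_t}=\ip{\xi_s,\xi'_s}\ip{\xi_t,\xi'_t}\]
since $S_s$ is an isometry. Hence $U_{s,t}$ extends to an isometry $G^\alpha_s\otimes G^\alpha_t\hookrightarrow H$ whose image lies in $G^\alpha_{s+t}$: indeed $y\alpha_s(x)\Omega\in\alg_{0,s+t}\Omega$ whenever $y\in\alg_{0,s}$ and $x\in\alg_{0,t}$, since $\alpha_s(\alg_{0,t})=\alg_{s,s+t}\subset\alg_{0,s+t}$. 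For surjectivity, take an arbitrary generator
\[E=U^1_{u_1}\alpha_{u_1}\bigl(U^2_{u_2}\alpha_{u_2}(\cdots\alpha_{u_{n-1}}(U^n_{u_n})\cdots)\bigr)\]
of $\mathcal{A}_{s+t}$ with $u_1+\cdots+u_n=s+t$ (achieved by padding with the trivial cocycle if necessary), let $k$ be the largest index with $v_k:=u_1+\cdots+u_k\leq s$, set $r=s-v_k$, and apply the cocycle identity $U^{k+1}_{u_{k+1}}=U^{k+1}_r\,\alpha_r(U^{k+1}_{u_{k+1}-r})$. Substituting and regrouping the iterated $\alpha$'s expresses
\[E=y\cdot\alpha_s(x),\quad y\in\mathcal{A}_s,\quad x=U^{k+1}_{u_{k+1}-r}\alpha_{u_{k+1}-r}(U^{k+2}_{u_{k+2}}\cdots)\in\mathcal{A}_t,\]
so $E\Omega=U_{s,t}(y\Omega\otimes x\Omega)$ is in the range. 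As such $E\Omega$ span a dense subspace of $G^\alpha_{s+t}$ and the image of $U_{s,t}$ is closed, $U_{s,t}$ is onto.

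Associativity is checked on the total set of vectors $y_1\Omega\otimes y_2\Omega\otimes y_3\Omega$ with $y_i\in\mathcal{A}_{s_i}$: both $U_{s_1,s_2+s_3}(1\otimes U_{s_2,s_3})$ and $U_{s_1+s_2,s_3}(U_{s_1,s_2}\otimes 1)$ send this to $y_1\alpha_{s_1}(y_2)\alpha_{s_1+s_2}(y_3)\Omega$ by the homomorphism property of $\alpha$, and the identity extends by continuity. For the measurability axiom, $\H=\{(t,\xi):\xi\in G^\alpha_t\}$ sits in the Polish space $(0,\infty)\times H$ as a Borel subset, and the map $(s,t,\xi_s,\xi_t)\mapsto\xi_s S_s\xi_t$ is jointly Borel by strong continuity of $S$ and the continuity of multiplication in Lemma \ref{multlemma}. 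Finally, $S_s\Omega=\Omega$ gives $U_{s,t}(\Omega\otimes\Omega)=\Omega$, so $\Omega_t:=\Omega$ is the canonical unit.

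For complete spatiality, each gauge cocycle $U\in G(\alpha)$ provides a unit $u_t:=U_t\Omega\in G^\alpha_t$, because using $U_t\in\m_t$ we have
\[U_{s,t}(U_s\Omega\otimes U_t\Omega)=U_s\,\alpha_s(U_t)\Omega=U_{s+t}\Omega.\]
Iterating the factorisation from the second paragraph represents every generator $E\Omega$ with $u_1+\cdots+u_n=t$ as a decomposable vector of unit values $U^1_{u_1}\Omega\otimes\cdots\otimes U^n_{u_n}\Omega$, so the type-I subsystem generated by the gauge units is dense in, and hence equals, the whole of $(G^\alpha_t)$. The only non-formal step in the argument is the factorisation $E=y\,\alpha_s(x)$; everything else is bookkeeping on top of Lemma \ref{multlemma} and the cocycle relations.
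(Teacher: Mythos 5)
Your proof is correct, and it fills in exactly the verification that the paper leaves implicit with the remark that the corollary ``follows immediately'' from Lemma \ref{multlemma}: the isometry and the tensor-product inner-product identity come from Lemma \ref{multlemma}, the crucial nontrivial step is the factorisation of a generator $E\in\mathcal{A}_{s+t}$ as $y\,\alpha_s(x)$ with $y\in\mathcal{A}_s$, $x\in\mathcal{A}_t$ via the cocycle identity, and complete spatiality is obtained by iterating that factorisation to express every generator $E\Omega$ as a decomposable vector of gauge-cocycle unit values. This is the argument the authors had in mind, so the two approaches coincide.
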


\medskip

The units in the product system $(G^\alpha_t, U_{s,t})$ are multiplicative cocycles, that is $\{u_t:u_t \in G^\alpha_t\} \subseteq H$ satisfying $u_sS_su_t=u_{s+t}$. Similarly addits are additive cocycles $\{b_t:b_t \in G^\alpha_t\} \subseteq H$ satisfying $b_s+S_sb_t=b_{s+t}.$ We denote the unital cocycles and the  exponential cocycles by $\mathfrak{U}(\alpha)$ and $\mathfrak{U}_\Omega(\alpha)$ respectively. Also we denote by $\mathfrak{A}(\alpha)$ the space of all additive cocycles and denote by $\mathfrak{A}_0(\alpha)$ the subset of $\mathfrak{A}(\alpha)$ satisfying the structure equation
\begin{equation}\label{structureeqn} \ip{b_t-P_0b_t,b_t-P_0b_t}+P_0b_t+P_0Jb_t=0. \end{equation}
where $P_0$ is the projection onto $\C\Omega_t$. 
Let $c_t=b_t+\lambda t\Omega$, where $b$ is a centred additive cocycle. Using Lemma \ref{additlemma}, the structure equation (\ref{structureeqn}) can be rewritten as
\begin{equation}\label{structureeqn2} \norm{b_1}^2+\lambda+\overline{\lambda}=0.\end{equation}

Let $c_t=b_t+\lambda t\Omega$, where $b\in\mathfrak{A}_\Omega$. Then we define $\Exp(c)_t:=e^{\lambda t}\Exp_\Omega(b)_t$ for all $t\geq0$. Since
$$\norm{\Exp(c)_t}^2=e^{(\norm{b_1}^2+\lambda+\overline{\lambda})t}$$
we see that $\Exp(c)$ is a unital cocycle precisely when $c$ satisfies the structure equation (\ref{structureeqn2}). Conversely, given a unital cocycle $u$ there exists $\lambda\in\C$ such that $\ip{\Omega,u_t}=e^{\lambda t}$. Since $v_t=e^{-\lambda t}u_t$ is an exponential cocycle the assignment $\Log(u)_t=\Log_\Omega(v)_t+\lambda t\Omega$ gives an inverse for $\Exp:\mathfrak{A}_0(\alpha)\to\mathfrak{U}(\alpha)$.

\begin{cor}\label{isometrycor} There exists an isometry $D$ from $H(G(\alpha))$ into the subspace
 $$\overline{\lin}\{b_1:~b\in\mathfrak{A}_0~\text{centred}\}.$$
\end{cor}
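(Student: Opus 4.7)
The plan is, for each gauge cocycle $U\in G(\alpha)$, to apply the $\Log$ construction of Section 5 to the unital cocycle $\{U_t\Omega\}$, extract its centred part $b^U$, and show that $U\mapsto b^U_1$ extends linearly to the desired isometry out of $H(G(\alpha))$.

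First I would check that $\{U_t\Omega\}_{t\geq 0}$ is a unital cocycle in the product system $(G^\alpha_t,U_{s,t})$ of the noncommutative white noise. Membership $U_t\Omega\in G^\alpha_t$ is immediate from $U_t\in\alg_{0,t}$; the norm is one because $\|U_t\Omega\|^2=\tau(U_t^*U_t)=1$; and the cocycle identity combined with Lemma \ref{multlemma} and the corollary just after it yields
\[ U_{s,t}(U_s\Omega\otimes U_t\Omega)=U_s\Omega\cdot S_s(U_t\Omega)=U_s\alpha_s(U_t)\Omega=U_{s+t}\Omega. \]
Applying $\Log$ then produces an element $\Log(U\Omega)=b^U+\lambda_U t\Omega\in\mathfrak{A}_0(\alpha)$ with $b^U$ a centred addit, so $b^U_1$ lies in the target subspace.

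Next I would identify the scalars $\lambda_U$ and compute the covariance of the $b^U_1$. From $e^{\lambda_U t}=\ip{\Omega,U_t\Omega}=\tau(U_t)=e^{c_*(1,U)t}$ one reads off $\lambda_U=c_*(1,U)$, and the identity $c_*(V,U)=\overline{c_*(U,V)}$ (a consequence of $\tau(V_t^*U_t)=\overline{\tau(U_t^*V_t)}$) gives $\overline{\lambda_U}=c_*(U,1)$. Since $\Exp_\Omega(b^U)_t=e^{-\lambda_U t}U_t\Omega$ by the $\Exp$--$\Log$ bijection, Remark \ref{additsindex1} together with $\tau(U_t^*V_t)=e^{c_*(U,V)t}$ produces
\[ e^{\ip{b^U_1,b^V_1}t}=\ip{\Exp_\Omega(b^U)_t,\Exp_\Omega(b^V)_t}=e^{(c_*(U,V)-c_*(U,1)-c_*(1,V))t}, \]
so $\ip{b^U_1,b^V_1}=c_*(U,V)-c_*(U,1)-c_*(1,V)$.

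With this covariance formula in hand, I would define $D_0$ on the dense subspace of finitely supported functions $f\colon G(\alpha)\to\C$ with $\sum_U f(U)=0$ by $D_0 f=\sum_U f(U)\,b^U_1$. Expanding $\ip{D_0 f,D_0 g}$ splits into three sums: the first is $\sum_{U,V}\overline{f(U)}g(V)c_*(U,V)=\ip{f,g}_{H(G(\alpha))}$, while the remaining two factor as $\bigl(\sum_U\overline{f(U)}c_*(U,1)\bigr)\bigl(\sum_V g(V)\bigr)$ and $\bigl(\sum_U\overline{f(U)}\bigr)\bigl(\sum_V g(V)c_*(1,V)\bigr)$, both vanishing under the zero-sum condition. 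Hence $D_0$ is inner-product preserving, descends to the quotient defining $H(G(\alpha))$, and extends by continuity to an isometry $D$ with image in $\overline{\lin}\{b_1 : b\text{ a centred addit}\}$. The main obstacle is keeping straight the two layers of normalisation --- the passage from the unital cocycle $U\Omega$ to its exponential form $e^{-\lambda_U t}U\Omega$, then to its centred $\Log_\Omega$ --- so that the ``boundary'' terms involving the identity cocycle are exactly the ones annihilated by the zero-sum condition on $H(G(\alpha))$.
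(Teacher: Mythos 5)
Your proposal is correct and follows essentially the same path as the paper: apply $\Log$ to the unital cocycle $U_t\Omega$ associated with $U$, split off the centred addit, and observe that the covariance identity $c_*(U,V)=\overline{\lambda_U}+\lambda_V+\ip{b^U_1,b^V_1}$ makes the map $U\mapsto b^U_1$ implement an isometry on the zero-sum functions. The only difference is that the paper stops at the covariance identity and cites Arveson's Proposition 2.6.9 for the final step, whereas you unpack that proposition explicitly by expanding $\ip{D_0 f,D_0 g}$ and using the zero-sum condition to kill the cross terms.
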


\begin{proof} If $U$ and $V$ are gauge cocycles, and $u$, $v$ their corresponding unital cocycles there exist centred addits $b$ and $c$ and constants $\lambda,\mu\in\C$ such that $\Log(u)_t=b_t+\lambda t\Omega$ and $\Log(v)_t=c_t+\mu t\Omega$. Moreover
$$\ip{u_t,v_t}=e^{(\overline{\lambda}+\mu+\ip{b_1,c_1})t},$$
i.e. $c_*(U,V)=\overline{\lambda}+\mu+\ip{b_1,c_1}$. The result now follows from \cite{arveson}, Proposition 2.6.9.   \end{proof}

\begin{cor}\label{trivcor} The gauge index $\dim G(\alpha)$ is zero if and only if the gauge group is trivial, i.e. $G(\alpha)\cong\mathbb{R}$. \end{cor}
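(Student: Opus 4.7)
The plan is as follows. The gauge group $G(\alpha)$ always contains the one-parameter subgroup of scalar unitary cocycles $\{U^\mu : U^\mu_t = e^{i\mu t}\cdot 1,\; \mu\in\R\}$, which is isomorphic to $(\R,+)$; the statement ``$G(\alpha)\cong\R$'' is to be read as asserting that these are the only gauge cocycles. Both directions will rest on combining the $\Exp$/$\Log$ correspondence between unital cocycles and centred addits (plus a scalar drift) with the isometry $D$ of Corollary \ref{isometrycor}.

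For the $(\Leftarrow)$ implication I would assume every element of $G(\alpha)$ has the form $U^\mu_t=e^{i\mu t}\cdot 1$. Then $\tau((U^\mu)_t^*U^\nu_t)=e^{i(\nu-\mu)t}$, so $c_*(U^\mu,U^\nu)=i(\nu-\mu)$, and for any finitely supported $f:G(\alpha)\to\C$ with $\sum_\mu f(\mu)=0$ the expression
\begin{equation*}
\sum_{\mu,\nu}c_*(U^\mu,U^\nu)\overline{f(\mu)}f(\nu)=i\Big(\sum_\nu \nu f(\nu)\Big)\overline{\sum_\mu f(\mu)}-i\Big(\sum_\mu \mu\overline{f(\mu)}\Big)\sum_\nu f(\nu)
\end{equation*}
vanishes identically, so the seminorm collapses and $H(G(\alpha))=\{0\}$.

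For the $(\Rightarrow)$ implication I assume $\dim H(G(\alpha))=0$ and pick $U\in G(\alpha)$, producing the unital cocycle $u_t:=U_t\Omega\in G^\alpha_t$ with $\Log(u)_t=b(U)_t+\lambda(U)t\Omega$, $b(U)$ a centred addit. Unpacking the proof of Corollary \ref{isometrycor}: the identity $c_*(U,V)=\overline{\lambda(U)}+\lambda(V)+\ip{b_1(U),b_1(V)}$ is precisely a Kolmogorov decomposition of the CPD kernel $c_*$, and a routine calculation on finitely supported $f$ with $\sum f=0$ shows that the quotient-completion defining $H(G(\alpha))$ is isometric to $\overline{\lin}\{b_1(U)-b_1(V):U,V\in G(\alpha)\}$. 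Choosing $V=1$, for which $u_t=\Omega$ and $b(1)=0$, the hypothesis $\dim H(G(\alpha))=0$ forces $b_1(U)=0$ for every $U\in G(\alpha)$. Lemma \ref{additlemma} then yields $b(U)=0$, so $u_t=e^{\lambda(U)t}\Omega$, and the structure equation \eqref{structureeqn2} reduces to $\lambda(U)+\overline{\lambda(U)}=0$, forcing $\lambda(U)\in i\R$. Since $U_t\in\m_t\subseteq\m$, $\Omega$ is separating for $\m$, and $U_t\Omega=e^{\lambda(U)t}\Omega$, we conclude $U_t=e^{\lambda(U)t}\cdot 1$ is a scalar, as required.

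The main step to execute carefully is the identification of $H(G(\alpha))$ with the span of differences $b_1(U)-b_1(V)$; this is where the Kolmogorov decomposition implicit in Corollary \ref{isometrycor} must be made explicit, since it is what converts the dimensional vanishing of $H(G(\alpha))$ into the pointwise vanishing $b_1(U)=0$. Everything after that is routine bookkeeping with $\Exp$, $\Log$ and the separating property of $\Omega$.
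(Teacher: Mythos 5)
Your proof is correct and follows essentially the same route as the paper: both hinge on the explicit form of the isometry $D$ from Corollary \ref{isometrycor}, namely $D(\sum_U a_U[\delta_U])=\sum_U a_U\,(1-P_0)\Log(U\Omega)_1$, to translate $\dim H(G(\alpha))=0$ into $b_1(U)=0$ for every $U$, and then recover scalarity from the separating property of $\Omega$ together with the structure equation. Your $(\Leftarrow)$ direction does the cancellation by hand on the CPD kernel rather than invoking $D$ directly, but this is the same computation in a different guise.
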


\begin{proof} Recall that $H(G(\alpha))$ is the completion of a quotient space of functions and denote the equivalence class of a function $f$ by $[f]$. The proof of Corollary \ref{isometrycor} shows that
$$D(\sum_{U\in G(\alpha)}{a_U [\delta_{U}]})=\sum_{U\in G(\alpha)}{a_U(1-P_0)\Log(U\Omega)_1},$$ 
whenever finitely many of the $a_U\in\C$ are nonzero and we have that $\sum_{U\in G(\alpha)}{a_U}=0.$

If the gauge group is trivial every gauge cocycle is of the form $U_t=e^{i\theta t} 1$ for some $\theta\in\R$. Thus $\Log(U\Omega)_t\in P_0H$, so $D(\sum_{G(\alpha)}{a_U[\delta_{U}]})=0$, hence $H(G(\alpha))\cong\{0\}$. Conversely, let $\dim G(\alpha)=0$ and pick $U\in G(\alpha)$. We must have $D([\delta_U-\delta_1])=(1-P_0)\Log(U\Omega)_1=0$, so there exists $\lambda\in \C$ with $\Log(U\Omega)_t=\lambda t\Omega$. Writing $U_t=e^{\lambda t}1$ we see that $\lambda=i\theta$ for some $\theta\in\R$.   \end{proof}

\begin{rem} There is a natural action of $G(\alpha)$ as automorphisms of the product system $(G^\alpha_t,U_{s,t})$ via $(U\cdot \xi)_t=U_t\xi_t$ for any measurable section $\xi$. It follows that the gauge group of an \en-semigroup on a \twoone factor is always a subgroup of one of the groups $G_H$ in \cite{arveson}, Section 3.8.
\end{rem}

\section{Computation of Gauge index}\label{gauge}

Let $\alpha$ be an \en-semigroup on a II$_1$ factor $\m\subseteq B(L^2(\m))=H$ and $S=\{S_t:t\geq 0\}$ be the canonical unit for $\alpha$.  

\begin{defn} An additive cocycle for $S$ is a continuous map $b:\R_+\to H$ satisfying $b_s+S_sb_t=b_{s+t}$ for all $s,t\geq 0$,
for any $\xi\in H$. 
\end{defn}

\begin{rem}\label{Sadditsconjugate} If we define a centred additive cocycle for the unit $T$ as a continuous family $(b_t)\in H$ satisfying $b_s+T_sb_t=b_{s+t}$ and $\ip{b_t,T_t\xi}=0$ for all $s,t\geq0$, $\xi\in H$ then, as in the proof of Lemma \ref{additlemma},  centred additive cocycles satisfy $\ip{b_s,c_s}=s\ip{b_1,c_1}$ for all $s\geq0$.
Hence additive cocycles form a Hilbert space via the inner product $\ip{b,c}:=\ip{b_1,c_1}_H$. It is easily seen that if $\alpha$ and $\beta$ are conjugate \en-semigroups then the spaces of centred additive cocycles for their respective canonical units are isomorphic. But they need not be invariant under cocycle conjugacy. 
\end{rem}

\begin{lem}\label{fockaddits} Let $\gamma^n$ be the free flow of index $n$ on the II$_1$ factor $L(F_\infty)\subseteq B(\Gamma_f((L^2(0,\infty), \k^\C))$. Every additive cocycle $\{b_t: t \geq 0\}$ for $S$, satisfying an additional condition $b_t \in \Gamma_f(L^2((0,t),\k^\C)$ is of the form $b_t=\lambda t+v1_{[0,t]}$ where $\lambda\in\C$, $v\in\k^\C$. \end{lem}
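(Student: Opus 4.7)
The plan is to use the GNS identification to transfer the cocycle equation to the full Fock space and then solve it level by level in the direct sum decomposition.

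First I would verify that, under the identification $H = L^2(L(F_\infty)) \cong \Gamma_f(L^2(\R_+;\k^\C))$ via $x \mapsto x\Omega$, the canonical unit $S_t$ is precisely the second quantization $\bigoplus_{n\geq 0} T_t^{\otimes n}$ of the unilateral shift. Indeed, for any product $s(f_1)\cdots s(f_k)\Omega$ we have $S_t(s(f_1)\cdots s(f_k)\Omega) = s(T_tf_1)\cdots s(T_tf_k)\Omega$, and these vectors span a dense subspace; expanding in terms of creation and annihilation operators shows that $S_t$ preserves each tensor degree and acts on $L^2(\R_+;\k^\C)^{\otimes n}$ as $T_t^{\otimes n}$.

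Then I would decompose $b_t = \bigoplus_{n\geq 0} b_t^{(n)}$ with $b_t^{(n)} \in L^2((0,t);\k^\C)^{\otimes n}$ (using the localization hypothesis) and observe that the cocycle relation decouples into
\[ b_s^{(n)} + T_s^{\otimes n}\, b_t^{(n)} = b_{s+t}^{(n)} \qquad (n\geq 0,\ s,t\geq 0). \]
For $n=0$ this is Cauchy's equation with continuity, so $b_t^{(0)} = \lambda t$ for some $\lambda \in \C$. For $n=1$, evaluating pointwise: if $x<s$ the identity reduces to $b_{s+t}^{(1)}(x) = b_s^{(1)}(x)$, so $f(x):=b_T^{(1)}(x)$ is well defined for any $T>x$; if $s\leq x < s+t$ it gives $b_{s+t}^{(1)}(x) = b_t^{(1)}(x-s)$, i.e.\ $f(x)=f(x-s)$ for every $0<s\leq x$, forcing $f$ to be constant. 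Hence $b_t^{(1)} = v\,\mathbf{1}_{[0,t]}$ for some $v\in \k^\C$.

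The crux is the case $n\geq 2$, which should kill all higher components. The support of $b_s^{(n)}$ lies in $(0,s)^n$ and the support of $T_s^{\otimes n}b_t^{(n)}$ in $(s,s+t)^n$; these are disjoint, so $b_{s+t}^{(n)}$ is forced to vanish on the ``mixed region''
\[ M_s := \{(x_1,\ldots,x_n)\in(0,s+t)^n : \min_i x_i < s < \max_i x_i\}. \]
Fixing $T>0$ and letting $s$ range over $(0,T)$, for any off-diagonal point $(x_1,\ldots,x_n)\in(0,T)^n$ one can pick $s$ strictly between $\min_i x_i$ and $\max_i x_i$, placing the point in $M_s$. Since the diagonal $\{x_1=\cdots=x_n\}$ has Lebesgue measure zero in $(0,T)^n$ for $n\geq 2$, we conclude $b_T^{(n)} = 0$ in $L^2$. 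Combining the three cases gives the asserted form $b_t = \lambda t + v\mathbf{1}_{[0,t]}$. I expect the main obstacle to be formalising the ``varying $s$'' support argument in step four, and making the identification of $S_t$ with $\bigoplus_n T_t^{\otimes n}$ careful enough to justify the level-by-level decoupling.
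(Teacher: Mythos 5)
Your proposal is correct and reaches the same conclusion, but it departs from the paper's proof in two of the three graded components, so it is worth spelling out the comparison.

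For $n=0$ you and the paper argue identically via Cauchy's equation. For $n=1$ the paper avoids all pointwise considerations by pairing $b^1_s$ against the explicitly known addits $r\mapsto 1_{[0,r]}\otimes e_i$, obtaining $\int_0^r\langle e_i,b^1_s(t)\rangle\,dt=\mu_i r$ and reading off constancy from the linearity in $r$; you instead manipulate the measurable function $b^{(1)}_T$ directly and argue it is invariant under a dense set of positive translations. Your version is more elementary and self-contained, but to be fully rigorous it needs to pass through a countable dense set of $s$ (as you partly anticipate) and then an integration/Lebesgue-point step to promote "a.e.\ invariant under rational shifts" to "a.e.\ constant"; the paper's pairing argument dodges this entirely. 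For $n\ge 2$ the two arguments are really the same support-covering idea viewed differently: the paper iterates the cocycle equation at dyadic mesh $2^{-k}s$ to trap $\supp F_s$ inside $\bigcup_i[2^{-k}is,2^{-k}(i+1)s]^{\times n}$ and then intersects over $k$, while you use a single binary split $(0,s)^n\sqcup(s,t)^n$ and let $s$ vary over a countable dense set so the "mixed region" exhausts the off-diagonal. Both yield $\supp b^{(n)}_T\subseteq\{x_1=\cdots=x_n\}$, a null set for $n\ge 2$. Neither version has a gap; the one thing to make explicit in your write-up is that each of the "for any off-diagonal point pick $s$" and "for every $0<s\le x$" steps must be routed through countably many $s$ to remain inside the $L^2$/a.e.\ framework, exactly as you flag in your closing remark.
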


\begin{proof} Note that $$S_t(f_1\otimes\cdots \otimes f_n)=T_tf_1\otimes\cdots\otimes T_tf_n.$$ 
As $S$ leaves each of the spaces $L^2(\R_+;\k^\C)\opower{n}$ invariant each addit decomposes as $b=\sum^\oplus_{n\geq 0} b^n$, where $b^n$ is an addit in $L^2(\R_+;\k^\C)\opower{n}$. Since $b^0_{s+t}=b^0_s+S_sb^0_t=b^0_s+b^0_t$, we have $b^0_t=\lambda t$ for some $\lambda\in\C$. 

Pick an orthonormal basis $\{e_i:~i\in I\}$ for $\k$. Since $b^1_s\in L^2([0,s];\k^\C)$ and since $r\mapsto 1_{[0,r]}\otimes e_i$ is a centred addit we have for all $0\leq r\leq s$ and $i\in I$ 
\begin{eqnarray*} \int_0^r{\ip{e_i,b^1_s(t)}dt}&=&\ip{1_{[0,r]}\otimes e_i,b^1_s}=\ip{1_{[0,r]}\otimes e_i,b^1_r+S_rb^1_{s-r}}\\
&=&\ip{1_{[0,r]}\otimes e_i,b^1_r}=\mu_i r\end{eqnarray*}
for some $\mu_i\in\C$.
Thus $b^1_s=1_{[0,s]}\otimes v$, where $v=\sum {\mu_ie_i}\in\k^\C$.

Fix $n\geq 2$ and set $F_s:=b^n_s\in L^2(\R_+;\k^\C)\opower{n}\simeq L^2(\R_+^n;(\k^\C)\opower{n})$ for each  $s\geq0$. Then by induction we have, for any $k\in\N$,
$$F_s=\sum_{i=0}^{2^k-1} S_{2^{-k}is}F_{2^{-k}s}.$$
Thus, modulo a null set,	
$$\supp F_s\subset \bigcup_{i=0}^{2^k-1}[2^{-k}is,2^{-k}(i+1)s]^{\times n}$$
and since this holds for every $k\geq 0$ we get $$\supp F_s\subset \{x\in\R_+^n:~x_1=x_2=\ldots=x_n\},$$ which has measure zero.   \end{proof}

For Clifford flow on the hyperfinite II$_1$ factor $\mathcal{R}$ the canonical unit is the second quantized shift on the antisymmetric Fock space $\Gamma_a(L^2((0,\infty),\k))$. But this is the restriction of the second quantized shift on free Fock space to subspace generated by the antisymmetric tensors. So the following corollary to lemma \ref{fockaddits} follows immediately.

\begin{cor}\label{cliffordaddits}
Let $\alpha^n$ be the Clifford flow of index $n$ on the hyperfinite II$_1$ factor $\mathcal{R}\subseteq B(\Gamma_a((L^2(0,\infty), \k^\C))$. Every additive cocycle $\{b_t:t \geq 0\}$ for the canonical unit $S$, satisfying $b_t \in \Gamma_a(L^2((0,t),\k^\C))$, is of the form $b_t=\lambda t+v1_{[0,t]}$ where $\lambda\in\C$, $v\in\k^\C$. 
\end{cor}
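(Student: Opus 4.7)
The strategy is simply to transport Lemma~\ref{fockaddits} through the canonical isometric inclusion $\iota : \Gamma_a(L^2(\R_+;\k^\C)) \hookrightarrow \Gamma_f(L^2(\R_+;\k^\C))$ of the antisymmetric Fock space into the full Fock space. I would begin by verifying that $\iota$ intertwines the two canonical units, namely that $\iota \circ S^{\alpha^n}_t = S^{\gamma^n}_t \circ \iota$, where $S^{\alpha^n}$ and $S^{\gamma^n}$ denote the canonical units of the Clifford flow $\alpha^n$ and the free flow $\gamma^n$ respectively. This is immediate from the formula $f_1 \otimes \cdots \otimes f_k \mapsto T_t f_1 \otimes \cdots \otimes T_t f_k$ recorded in the proof of Lemma~\ref{fockaddits}: the second-quantized shift commutes with antisymmetrisation, so $\Gamma_a$ is invariant under it and the restriction recovers the Clifford-flow unit.

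Now, given an additive cocycle $\{b_t\}$ as in the statement, the family $\{\iota(b_t)\}$ is an additive cocycle for the free-flow canonical unit whose $t$-th term lies in $\Gamma_f(L^2((0,t);\k^\C))$. Lemma~\ref{fockaddits} therefore applies and yields
\[
\iota(b_t) \;=\; \lambda t\, \Omega \,+\, v \otimes 1_{[0,t]}
\]
for some $\lambda \in \C$ and $v \in \k^\C$. Both terms on the right-hand side already lie in the antisymmetric Fock space, since the zero-particle subspace $\C\Omega$ and the one-particle subspace $L^2(\R_+;\k^\C)$ are trivially antisymmetric. Pulling this identity back through $\iota$ gives the claimed form $b_t = \lambda t + v 1_{[0,t]}$.

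There is no serious obstacle: the whole content of the argument is the observation that $\Gamma_a$ is invariant under the second quantisation of the shift, which reduces Corollary~\ref{cliffordaddits} to the restriction of Lemma~\ref{fockaddits} to a subspace where, fortuitously, all addits already take their values.
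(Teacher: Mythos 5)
Your argument is correct and is precisely the route the paper takes: the paper's one-line ``proof'' of Corollary~\ref{cliffordaddits} is exactly the observation that the Clifford flow's canonical unit is the restriction of the free flow's canonical unit to $\Gamma_a \subset \Gamma_f$, so an addit valued in $\Gamma_a(L^2((0,t),\k^\C))$ is already an addit in the sense of Lemma~\ref{fockaddits}. You have merely spelled out the intertwining $\iota \circ S^{\alpha^n}_t = S^{\gamma^n}_t \circ \iota$ and the fact that the $0$- and $1$-particle sectors lie in $\Gamma_a$, which the paper leaves implicit.
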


\begin{lem}\label{N} Let $\n$ be a von Neumann subalgebra of a II$_1$ factor $\m$ and $\Omega$ a cyclic and separating trace vector for $\m$. If $\xi=x\Omega$ for some $x\in \m$, $x\notin\n$ then $\xi\notin \overline{\n\Omega}$. \end{lem}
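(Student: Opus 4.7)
The plan is to invoke the trace-preserving conditional expectation $E:\m\to\n$ (due to Umegaki), which exists because $\m$ is a finite von Neumann algebra with faithful normal tracial state $\tau(\,\cdot\,)=\ip{\Omega,\cdot\,\Omega}$, and use it to convert the question about the closed subspace $\overline{\n\Omega}\subseteq H$ into a statement about elements of $\m$.

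First I would identify $\overline{\n\Omega}$ with the standard Hilbert space $L^2(\n,\tau|_\n)$ and observe that the orthogonal projection $P:H\to\overline{\n\Omega}$ implements $E$ on $\m\Omega$; that is,
\[ P(x\Omega)=E(x)\Omega \qquad (x\in\m). \]
This is an immediate consequence of the defining trace property $\tau(yx)=\tau(yE(x))$ for every $y\in\n$: both sides above have the same inner product against every vector $y\Omega$ in the dense subspace $\n\Omega\subseteq\overline{\n\Omega}$, and both lie in $\overline{\n\Omega}$ (the left by definition, the right because $E(x)\in\n$).

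With this in hand the statement is immediate by contrapositive. If $\xi=x\Omega$ lies in $\overline{\n\Omega}$, then $P\xi=\xi$, so $E(x)\Omega=x\Omega$; since $\Omega$ is separating for $\m$, this forces $x=E(x)\in\n$, contrary to the hypothesis $x\notin\n$.

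No step presents a real obstacle: the argument reduces entirely to the existence and basic properties of the trace-preserving conditional expectation, a standard feature of finite von Neumann algebras. The only subtlety worth flagging is that one genuinely needs a \emph{normal} conditional expectation onto an arbitrary (not necessarily factorial) von Neumann subalgebra $\n$, which is guaranteed precisely because $\tau$ is tracial and faithful.
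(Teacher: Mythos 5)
Your argument is correct: the key fact that the orthogonal projection $P$ onto $\overline{\n\Omega}$ implements the trace-preserving conditional expectation $E:\m\to\n$ on $\m\Omega$ is standard, and from there the separating property of $\Omega$ immediately forces $x=E(x)\in\n$. This is a genuinely different route from the paper, however. The paper works with commutants directly: given an approximating sequence $x_n\in\n$ with $x_n\Omega\to x\Omega$, it observes $x_n^*\Omega\to x^*\Omega$ (via the trace identity $\norm{a\Omega}=\norm{a^*\Omega}$) and then, testing against $y\Omega$ for $y\in\m'$ and commuting operators through, deduces $zx\Omega=xz\Omega$ for all $z\in\n'$; density of $\m'\Omega$ and the separating property then give $x\in\n''=\n$. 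In effect the paper re-derives, in a self-contained way, exactly what the Umegaki expectation encodes, without ever naming it. Your version is shorter and more conceptual but leans on the existence of the normal trace-preserving conditional expectation onto an arbitrary von Neumann subalgebra of a finite factor, which is a heavier citation; the paper's version uses nothing beyond the double commutant theorem and elementary limit manipulations. Both are valid, and which one prefers is largely a matter of how much machinery one is willing to invoke.
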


\begin{proof} Suppose we have a sequence of vectors $x_n\Omega$ with $x_n\in\n$ for all $n$ and $x_n\Omega\to\xi$. Then, using the cyclic property of the trace, $x_n^*\Omega\to x^*\Omega$ and, for all $y\in \m'$, $z\in \n'$,
\begin{align*} \ip{y\Omega,zx\Omega}&=\lim_{n\to\infty}{\ip{y\Omega,zx_n\Omega}}=\lim_{n\to\infty}{\ip{yx_n^*,z\Omega}}\\
 &=\ip{yx^*\Omega,z\Omega}=\ip{y\Omega,xz\Omega}.
\end{align*}
Using density of $\m'\Omega$ and faithfulness of the trace this implies $x\in\n$, a contradiction.   \end{proof}

\begin{prop} If $\alpha^n$ is the Clifford flow of rank $n$ then the gauge index $\dim G(\alpha^n)=0$. \end{prop}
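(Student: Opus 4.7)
The plan is to reduce the claim, via Corollaries~\ref{isometrycor} and \ref{trivcor}, to showing that every centred additive cocycle in the white-noise super product system $(G^{\alpha^n}_t, U_{s,t})$ vanishes. Unwinding the definitions together with the identification $U_{s,t}(\xi_s\otimes\xi_t) = \xi_s S_s\xi_t$, this amounts to showing that every continuous family $\{b_t\}_{t\geq 0}$ with $b_t \in G^{\alpha^n}_t$, satisfying $b_s + S_s b_t = b_{s+t}$ and $\ip{\Omega, b_t} = 0$, is identically zero.

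The first step is to identify the relative commutant $\m_t = \r \cap \alpha^n_t(\r)'$. Using the Clifford anticommutation relations $\{u(f),u(g)\} = \ip{f,g}$, a direct computation of $[u(f_1)\cdots u(f_n), u(g)]$ for $g \in L^2((t,\infty);\k)$ shows that an element of $\r$ commutes with every such $u(g)$ if and only if it is a weak limit of \emph{even} products of Clifford generators $u(f)$ with $f \in L^2((0,t);\k)$; hence $\m_t = Cl_{\mathrm{even}}(L^2((0,t);\k))''$. Since gauge cocycles are adapted to $\m_t$, we have $\alg_{0,t} \subseteq \m_t$, and the cyclicity of $\Omega$ for the Clifford representation yields the key inclusion
\[
G^{\alpha^n}_t \;\subseteq\; \overline{\m_t\,\Omega}^{H} \;=\; \Gamma_{a,\mathrm{even}}(L^2((0,t);\k^\C)) \;\subseteq\; \Gamma_a(L^2((0,t);\k^\C)).
\]

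With this inclusion, Corollary~\ref{cliffordaddits} applies to $b$ and constrains it to the form $b_t = \lambda t\Omega + v\,1_{[0,t]}$ for some $\lambda \in \C$ and $v \in \k^\C$. Centredness kills the scalar term, leaving $b_t = v\,1_{[0,t]}$, a vector in the one-particle (hence odd) subspace. Comparing with the even-subspace containment above forces $v = 0$, so $b \equiv 0$; Corollary~\ref{isometrycor} then yields $H(G(\alpha^n)) = \{0\}$, which is the desired claim.

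The principal obstacle is the explicit identification of $\m_t$ in the first step. Should the commutant computation prove cumbersome, Lemma~\ref{N} offers a clean shortcut: one has $v\,1_{[0,t]} = \sqrt{2}\,u(v\,1_{[0,t]})\Omega$, and the operator $u(v\,1_{[0,t]})$ is an odd element of $\r$, which consequently cannot lie in $\m_t$. Applying Lemma~\ref{N} with $\n = \m_t$ then excludes the vector $v\,1_{[0,t]}$ from $\overline{\m_t\,\Omega}$ (and a fortiori from $G^{\alpha^n}_t$) whenever $v \neq 0$, bypassing the full determination of $\m_t$.
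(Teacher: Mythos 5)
Your proposal is correct and follows essentially the same route as the paper: identify $\m_t=\alpha^n_t(\r)'\cap\r$ as the even Clifford algebra generated by $u(f)$, $\supp f\subset[0,t]$ (the paper simply cites Alevras, Proposition 2.9), deduce $G^{\alpha^n}_t\subseteq\overline{\m_t\Omega}\subseteq\Gamma_a(L^2((0,t),\k^\C))$, apply Corollary~\ref{cliffordaddits} to constrain a centred addit to the one-particle form $v\,1_{[0,t]}$, and then use Lemma~\ref{N} to exclude it, finishing via Corollary~\ref{isometrycor}. Your ``shortcut'' via Lemma~\ref{N} is in fact exactly what the paper does (the reference to Corollary~\ref{trivcor} in your opening line is unnecessary for showing $\dim=0$, and you still implicitly need enough of the identification of $\m_t$ to place $G^{\alpha^n}_t$ inside $\Gamma_a(L^2((0,t),\k^\C))$ before invoking Corollary~\ref{cliffordaddits}), but none of this affects the correctness of the argument.
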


\begin{proof} By  \cite{alev}, Proposition 2.9, the relative commutant $\mathcal{R}_t=\alpha_t(\mathcal{R})'\cap\mathcal{R}$ is the von Neumann algebra generated by even polynomials in the elements $\{u(f):~\supp f \subset [0,t]\}$. Note that $\mathcal{R}_t\Omega= G^\alpha_t \subseteq \Gamma_a(L^2((0,t),\k^\C))$, and that any addit for $\{G^\alpha_t:t\geq 0\}$ is an additive cocycle for $\{S_t:t \geq 0\}$. Now it follows from corollary \ref{cliffordaddits} and lemma \ref{N} that the only centred addit is thus the zero addit. Thanks to \ref{isometrycor} we have
$$\dim G(\alpha)\leq \dim \overline{\lin}\left\{b_1:~b\in\mathfrak{A}_0~\text{centred}\right\}=0.$$   \end{proof}

Thus it follows from Corollary \ref{trivcor} that for Clifford flows on $\hyperfinite$ we have $G(\alpha)\cong \R$.

\begin{prop}
If $\beta^n$ is the even Clifford flow of rank $n$ then the gauge index $\dim G(\beta^n)=0$. \end{prop}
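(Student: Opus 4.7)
The proof plan is to imitate the Clifford flow argument by reducing the computation of the relative commutant $\r_{e,t}:=\beta^n_t(\r_e)'\cap\r_e$ to the already-known Clifford flow commutant $\r_t$. My claim is that these two algebras coincide, after which the centred addit analysis runs verbatim.

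First I would verify $\r_t\subseteq\r_{e,t}$: any even polynomial in $\{u(f):\supp f\subset[0,t]\}$ anticommutes twice with each $u(g)$ for $\supp g\subset[t,\infty)$, hence commutes with all of $\alpha^n_t(\r)\supseteq\beta^n_t(\r_e)$; and such an element sits in $\r_e$. The harder direction $\r_{e,t}\subseteq\r_t$ is the main obstacle. To handle it, I would use the $\Z/2$-graded tensor decomposition $\r\cong\r_{[0,t]}\hat\otimes_\varepsilon \r_{[t,\infty)}$, with $\beta^n_t(\r_e)=1\hat\otimes(\r_{[t,\infty)})_e$. The point is that for $a\in\r_{[0,t]}$ and $c\in(\r_{[t,\infty)})_e$ we always have $ac=ca$, regardless of the parity of $a$. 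Writing $x\in\beta^n_t(\r_e)'\cap\r$ as $\sum a_ib_i$ with $(a_i)$ linearly independent in $\r_{[0,t]}$ and $b_i\in\r_{[t,\infty)}$, the commutation relation $[x,c]=\sum a_i[b_i,c]=0$ forces each $b_i$ into the relative commutant $(\r_{[t,\infty)})_e'\cap\r_{[t,\infty)}$. A short CAR computation (exactly of the form $[u(f),u(g_1)u(g_2)]=\langle f,g_1\rangle u(g_2)-\langle f,g_2\rangle u(g_1)$) shows this last commutant is $\C$. Hence $x\in\r_{[0,t]}$, and intersecting with $\r_e$ gives $x\in(\r_{[0,t]})_e=\r_t$.

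With $\r_{e,t}=\r_t$ established, the spatial subspace $G^{\beta^n}_t=\overline{\r_{e,t}\Omega}$ lies inside $\Gamma_a(L^2((0,t),\k^\C))$, and every additive cocycle $b$ for $\beta^n$ produces a family $b_t\in G^{\beta^n}_t$ that is an additive cocycle for the canonical unit $S$ of the ambient Clifford flow. Corollary \ref{cliffordaddits} forces $b_t=\lambda t\Omega+v 1_{[0,t]}$ for some $\lambda\in\C$ and $v\in\k^\C$. If $b$ is centred, then $\lambda=0$ and $b_t$ is proportional to $u(v1_{[0,t]})\Omega$. Applying Lemma \ref{N} inside $\r$, with $\n=\r_{e,t}$, yields $u(v1_{[0,t]})\in\r_{e,t}\subseteq\r_e$; but $u(v1_{[0,t]})$ is an odd Clifford element, so this forces $v=0$. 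Therefore the space of centred addits is zero, and Corollary \ref{isometrycor} gives $\dim G(\beta^n)\leq 0$, proving $\dim G(\beta^n)=0$.

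The only delicate point is the commutant identification in the first paragraph; everything downstream is a direct transcription of the Clifford flow proof. In particular I would avoid invoking any result from \cite{alev} about $\beta^n_t(\r_e)'\cap\r_e$ and instead give the self-contained graded-tensor-product argument sketched above, since that both localises where the Clifford algebra structure is being used and makes clear why the answer is the same as in the Clifford flow case.
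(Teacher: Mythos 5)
Your proof is correct and follows the same overall strategy as the paper: reduce to the additive-cocycle classification of Corollary~\ref{cliffordaddits} inside $\Gamma_a(L^2((0,t),\k^\C))$ and then eliminate the nonzero $v\,1_{[0,t]}$ term. The main difference is that you supply a genuine argument for the commutant identification $\r_{e,t}:=\beta^n_t(\r_e)'\cap\r_e=\r_t$, using the $\Z/2$-graded factorisation $\r\cong\r_{[0,t]}\hat\otimes\r_{[t,\infty)}$ together with the triviality of the relative commutant of $(\r_{[t,\infty)})_e$ inside $\r_{[t,\infty)}$; the paper simply asserts this commutant ``as in the case of Clifford flows'' without giving a reference valid for the even flow, so you have filled an actual gap in the exposition. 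Both endgames are valid: the paper silently uses that $G^{\beta^n}_t\subseteq H^e_t$ and hence the odd one-particle term $v\,1_{[0,t]}$ must vanish by parity alone, whereas you invoke Lemma~\ref{N} exactly as in the Clifford case (applied in the ambient factor $\r$, with $\n=\r_{e,t}$) to force $u(v1_{[0,t]})\in\r_{e,t}\subseteq\r_e$ and then conclude $v=0$ from oddness; these are two equivalent ways of packaging the same parity observation. One small stylistic point: you could shorten your finish by noting directly that $b_t\in G^{\beta^n}_t\subseteq H^e_t$ lives in the even-particle subspace, so the odd component of $\lambda t\Omega+v1_{[0,t]}$ must already be zero, without appealing to Lemma~\ref{N} at all.
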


\begin{proof}
The GNS Hilbert space for even Clifford flows of rank $n$ is the subspace of the antisymmetric Fock space generated by the even particle spaces, that is $$H^e:=[\xi_1 \wedge\xi_2 \wedge \cdots \wedge \xi_{2m};~ \xi_1, \xi_2 \cdots \xi_{2m} \in L^2(0,\infty), \k^\C), ~m \in \N_0],$$ where $\k$ is a Hilbert space with dimension $n$.  Again the canonical unit for this even Clifford flow is the restriction of the second quantized shift to this space. 
Also the relative commutant $\mathcal{R}_t=\alpha_t(\mathcal{R})'\cap\mathcal{R}$ is again the von Neumann algebra generated by even polynomials in the elements  $\{u(f):~\supp f\subset [0,t]\}$, as in the case of Clifford flows, and  $ G^\alpha_t \subseteq  H^e_t$, where $$H^e_t:=[\xi_1 \wedge\xi_2 \wedge \cdots \wedge \xi_{2m};~ \xi_1, \xi_2 \cdots \xi_{2m} \in L^2(0,t), \k^\C), ~m \in \N_0].$$
By our earlier analysis the canonical unit does not admit any non-trivial additive cocycle (apart from $\{\lambda t\Omega\}$) in this space.    \end{proof}

The following lemma (and its proof) was pointed out to the first named author by Jesse Peterson on the discussion site mathoverflow.net.

\begin{lem}
 Let $\m_1$ and $\m_2$ be \twoone factors acting standardly with traces $\tau_1$ and $\tau_2$ and set $\m=\m_1*\m_2$, $\tau=\tau_1*\tau_2$. Then the relative commutant $\m_1'\cap\m$ in $L^2(\m,\tau)$ is trivial.
\end{lem}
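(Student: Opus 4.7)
The plan is to pass to the GNS Hilbert space $L^2(\m,\tau)$, reduce $x\in\m_1'\cap\m$ to a scalar by conditional expectation, and then rule out non-scalar central vectors using the free product reduced-word decomposition.

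First I would use the $\tau$-preserving conditional expectation $E_1:\m\to\m_1$. Since $E_1$ is $\m_1$-bimodular, $E_1(x)$ commutes with $\m_1$ whenever $x$ does; as $\m_1$ is a factor, $E_1(x)\in Z(\m_1)=\C 1$, so $E_1(x)=\tau(x)\cdot 1$. Replacing $x$ by $x-\tau(x)\cdot 1$, the task reduces to showing that $x\in\m_1'\cap\m$ with $E_1(x)=0$ forces $x=0$.

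Next I would invoke the standard free product identification
\[
L^2(\m)=\C\Omega\;\oplus\;\bigoplus_{n\geq 1}\ \bigoplus_{i_1\ne i_2\ne\cdots\ne i_n}H_{i_1}^{\circ}\otimes\cdots\otimes H_{i_n}^{\circ},
\]
where $H_i=L^2(\m_i)$ and $H_i^{\circ}=H_i\ominus\C$. Under this identification $L^2(\m_1)=\C\Omega\oplus H_1^{\circ}$, so the reduction $E_1(x)=0$ puts $\hat x:=x\Omega$ in the sum of all reduced-word summands \emph{other} than $\C\Omega$ and $H_1^{\circ}$. The condition $xu=ux$ for $u\in\m_1$ becomes the vector identity $L_u\hat x=R_u\hat x$, where $L_u,R_u$ denote left and right multiplication on $L^2(\m)$.

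I would then analyse $L_u\hat x-R_u\hat x=0$ in the reduced-word basis. For $u\in H_1^{\circ}$, the operator $L_u$ prepends an index-$1$ letter to any word starting with index $2$, and ``collides'' with the first letter when the word starts with index $1$ (producing a length-$n$ and a length-$(n{-}1)$ contribution); $R_u$ is symmetric on the right. Partitioning reduced words of length $n$ by their start/end indices $(i,j)\in\{1,2\}^2$ (with the alternation rule dictating which $(n,i,j)$ actually occur), I would project the equation onto the top length-$(N{+}1)$ stratum. Because prepended and appended index-$1$ letters land in subspaces distinguished by whether they appear at the left or right end, orthogonality forces the top-length components of $\hat x$ to vanish. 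A descending induction on length, tracking how the collision terms contribute to lower strata, then propagates the vanishing down to length $1$, so $\hat x=0$. Diffuseness of $\m_1$ (guaranteed, since $\m_1$ is a $\mathrm{II}_1$ factor) is used to ensure $H_1^{\circ}$ has dimension $\geq 2$, so that one can choose $u\in H_1^{\circ}$ freely and separate variables.

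The main obstacle I anticipate is the bookkeeping in the downward induction: the collision terms couple length-$n$ and length-$(n{-}1)$ components, so the vanishing at the top length does not immediately trivialise the full equation. A cleaner alternative, which I would use if the induction becomes too intricate, is to observe that as an $\m_1$--$\m_1$ correspondence $L^2(\m)\ominus L^2(\m_1)$ is a direct sum of summands isomorphic (as bimodules) to $L^2(\m_1)\overline{\otimes}\K$ or $L^2(\m_1)\overline{\otimes}\K\overline{\otimes}L^2(\m_1)$ for Hilbert spaces $\K$, i.e.\ coarse on at least one side; no such correspondence admits a non-zero $\m_1$-central vector, giving the conclusion in one stroke.
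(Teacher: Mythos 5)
Your ``cleaner alternative'' is exactly the paper's argument: decompose $L^2(\m,\tau)$ as an $\m_1$-bimodule into the standard piece $L^2(\m_1)$ plus a sum of coarse pieces $L^2(\m_1)\,\overline{\otimes}\,L^2(\m_1)$ (one for each reduced word beginning and ending in $L^2_0(\m_2)$), and then observe that a coarse $\m_1$-bimodule has no nonzero central vector. The paper spells out the last step via Hilbert--Schmidt operators: a central vector in the coarse bimodule corresponds to a Hilbert--Schmidt operator commuting with $\m_1$, hence a finite-rank projection in the II$_1$ factor $\m_1'$, which is impossible. You assert this step but do not justify it, and you place the use of the II$_1$ hypothesis in the wrong spot: it is not ``so that $\dim H_1^\circ\ge 2$'' but rather ``so that $\m_1'$ has no nonzero finite projections.'' (Any factor without minimal projections would do --- $\m_1$ being a nontrivial II$_1$ factor is what kills the HS operator.) Your preliminary reduction via the trace-preserving conditional expectation $E_1$ is sound and is a nice clean-up the paper does not bother with, since factoriality of $\m_1$ makes $E_1(x)$ scalar automatically. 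As for your primary plan --- the hands-on downward induction on reduced-word length from the identity $L_u\hat x = R_u\hat x$ --- this is a genuinely different, more elementary route, but it is exactly the kind of bookkeeping where the ``collision'' terms (the $\tau(u\xi_1)$-contraction dropping the length by one) couple strata, and you have not convinced me the induction closes; you yourself flag this concern. I would not accept that version without the details. Fall back immediately to the bimodule decomposition argument, and add the one missing sentence about Hilbert--Schmidt operators and finite projections.
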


\begin{proof}
Recall that $L^2(\m,\tau)$ decomposes as \begin{equation}\label{freedecomposition}  \C\oplus \bigoplus_{n\in\N}\bigoplus_{i_1\neq i_2\neq \ldots i_n} L^2_0(\m_{i_1},\tau_{i_1})\otimes\cdots\otimes L^2_0(\m_{i_n},\tau_{i_n})\end{equation} where $L^2_0(\m_{i_j},\tau_{i_j})$ is the orthogonal complement of the identity in the respective $L^2$ space (see \cite{voiculescu}). $L^2(\m,\tau)$ is naturally an $\m_1$-bimodule and under this decomposition the left action of $\m_1$ is given by
$$x\cdot \xi_1\otimes\cdots\otimes\xi_n=\left\{\begin{array}{ll} x\xi_1\otimes\cdots\otimes\xi_n & \text{if }\xi_1\in L^2(\m_1,\tau_1), \\ x\otimes\xi_1\otimes\cdots\otimes\xi_n & \text{if } \xi_1\in L^2_0(\m_2,\tau_2), \end{array} \right.$$
with the right action being defined similarly.

Picking any vector $\xi=\xi_1\otimes\cdots\otimes \xi_n$ beginning and ending with nontrivial elements of $L^2_0(\m_2,\tau_2)$ we see that it generates an $\m_1$-sub-bimodule $L^2(\m_1,\tau_1)\otimes \xi\otimes L^2(\m_1,\tau_1)$. From the decomposition (\ref{freedecomposition}) it follows easily that, as an $\m_1$-bimodule, $L^2(\m,\tau)$ decomposes into a direct sum of the standard bimodule $L^2(\m_1,\tau_1)$ and a countable number of these. Moreover each $L^2(\m_1,\tau_1)\otimes \xi\otimes L^2(\m_1,\tau_1)$ is canonically isomorphic to the bimodule $L^2(\m_1,\tau_1)\otimes L^2(\m_1,\tau_1)$ with action $x\cdot (a\otimes b)\cdot y=xa\otimes by$ (the coarse bimodule). 

Any element of $\m_1'\cap (\m_1*\m_2)$ corresponds to a vector $\xi\in L^2(\m,\tau)$ satisfying $x\cdot\xi=\xi\cdot x$. Since $\m_1$ is a factor we are left to characterise the central vectors in the coarse bimodules. Endowing the space of Hilbert-Schmidt operators $HS(L^2(\m_1,\tau_1))$ with the standard $\m_1$-bimodule structure coming from left and right multiplication one gets an isomorphism $L^2(\m_1,\tau_1)\otimes L^2(\m_1,\tau_1)\cong HS(L^2(\m_1,\tau_1))$. Thus a central vector in the coarse bimodule corresponds to a Hilbert-Schmidt operator in $B(L^2(\m_1,\tau_1))$ which lies in the commutant $\m_1'$ of $\m_1$ in $L^2(\m_1,\tau_1)$. For any such operator, upon taking a spectral projection, we get a finite dimensional projection living in $\m_1'$. But this is a \twoone factor, hence the coarse bimodule contains no central vectors.   \end{proof}

The following Corollary to the above lemma also follows from Proposition \ref{freesuper} in Section \ref{coupling}.

\begin{prop} Let $\gamma^n$ be the free flow on $\Phi(\k)$ of rank $n$. Then we have $\dim G(\gamma^n)=0$. \end{prop}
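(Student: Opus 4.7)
My plan is to exploit the free product structure of $\Phi(\k)$ together with the preceding lemma on triviality of relative commutants in free products. The reasoning is much simpler than in the Clifford case because we will show that the adapted subalgebra $\Phi(\k)_t := \gamma^n_t(\Phi(\k))' \cap \Phi(\k)$ is already \emph{trivial}, so there is no room for any nonzero centred addit at all.

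First I would record the free product decomposition induced by the orthogonal splitting $L^2(\R_+;\k) = L^2((0,t);\k) \oplus L^2((t,\infty);\k)$. Setting
\[
\Phi_1 := \{s(f): \supp f \subset [0,t]\}'' \quad \text{and} \quad \Phi_2 := \{s(g): \supp g \subset [t,\infty)\}'',
\]
the full Fock space decomposes accordingly and $\Phi(\k) = \Phi_1 * \Phi_2$ with $\Omega$ realising the free product of the tracial states (see \cite{voiculescu}). Both $\Phi_1$ and $\Phi_2$ are isomorphic to $L(F_\infty)$ hence are \twoone factors, so the preceding lemma applies. Moreover, because the shift $T_t$ is a unitary isomorphism $L^2(\R_+;\k) \to L^2((t,\infty);\k)$ and $\gamma^n_t(s(f)) = s(T_t f)$, one has $\gamma^n_t(\Phi(\k)) = \Phi_2$.

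Next I would apply the lemma (with the roles of $\m_1$ and $\m_2$ exchanged) to obtain
\[
\Phi(\k)_t = \gamma^n_t(\Phi(\k))' \cap \Phi(\k) = \Phi_2' \cap (\Phi_1 * \Phi_2) = \C.
\]
In particular $G^{\gamma^n}_t = \overline{\Phi(\k)_t \Omega} = \C\Omega$ for every $t > 0$.

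Finally, any centred additive cocycle $b$ for $\gamma^n$ lies fiberwise in $G^{\gamma^n}_t$ and satisfies $\langle \Omega, b_t\rangle = 0$, forcing $b_t = 0$. By Corollary \ref{isometrycor} we conclude
\[
\dim G(\gamma^n) \leq \dim \overline{\mathrm{Span}}\{b_1 : b \in \mathfrak{A}_0~\text{centred}\} = 0,
\]
so $\dim G(\gamma^n) = 0$. The one step where care is needed is verifying that the relevant algebras really are \twoone factors so that the free-product lemma applies; the remaining arguments are essentially formal given the identification $\gamma^n_t(\Phi(\k)) = \Phi_2$. Note that Lemma \ref{fockaddits} is not needed here, since the conclusion $G^{\gamma^n}_t = \C\Omega$ already precludes the existence of any nontrivial centred addit.
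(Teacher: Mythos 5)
Your proof is correct and takes essentially the same route as the paper: decompose $\Phi(\k)$ as the free product $\Phi_1 * \Phi_2$ along the splitting $L^2((0,t);\k)\oplus L^2((t,\infty);\k)$, identify $\gamma^n_t(\Phi(\k))=\Phi_2$, and invoke the preceding lemma on triviality of the relative commutant in a free product of \twoone factors. You spell out a couple of steps (the shift to $G^{\gamma^n}_t = \C\Omega$ and the appeal to Corollary \ref{isometrycor}) that the paper leaves implicit, but the key idea is identical.
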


\begin{proof} 
If $\Phi_t(\k) = \{s(f_t): f_t \in L^2(0,t)\}''$ and $\Phi_{[t}(\k) = \{s(f_{[t}): f_{[t} \in L^2(t,\infty)\}'',$ then $\Phi(\k)=\Phi_t(\k)*\Phi_{[t}(\k)$. It follows that $\left(\alpha_t(\Phi(\k))\right)'\cap \Phi(\k)=\C$.    \end{proof}

Triviality of the gauge group implies the following rigidity condition for the semigroup. A history $(\gamma,\m)$ on $B(H)$ consists of a group of automorphisms on $B(H)$ and an invariant von Neumann subalgebra $\m\subset B(H)$ (see \cite{interactions} or \cite{arveson}). Any history induces a pair of \en-semigroups via $\alpha_t:=\gamma_t|_{\m}$ and $\beta_t:=\gamma_{-t}|_{\m'}$ ($t\geq0$). Conversely, given a pair of \en-semigroups $\alpha$, $\beta$ on $\m$ and $\m'$ respectively the question of whether they extend to a history on $B(H)$ is still unsolved. Arveson showed that when they do exist, the number of distinct histories inducing the pair is exactly parametrised by the gauge group of $\alpha$ (\cite{arveson}, Proposition 2.8.4). Hence if $\alpha$ is a Clifford, even Clifford, or free flow on $\m$ and $\beta$ is any \en-semigroup on $\m'$, then a history extending $\alpha$ and $\beta$ must be unique. Physically, it is completely determined by its ``past'' and ``future'' dynamics. Whether this rigidity problem always holds, or if there exist examples of \en-semigroups on \twoone factors with nontrivial gauge groups, is an interesting open question.
 
%In the theory of \en-groups on type I factors, whether there exist examples  of \en-semigroups with trivial gauge group is an open question. It is interesting to note in the theory of \en-semigroups on II$_1$ factor the question is open the other way.

%%%%%%%%%%%%%%%%%%%%%%%%%%
%%%%%%%%%%%%%%%%%%%%%%%%%%
\section{Computation of coupling index}\label{coupling}
 
Let $\alpha=\{\alpha_t\}$ be an E$_0$-semigroup on a type II$_1$ factor $\m$ with trace $\tau$, acting standardly on $L^2(\m)$, and $\Omega$ be the cyclic and separating vector. For every $t >0$, denote 
$$E^\alpha_t= \{T \in B(L^2(\m)): \alpha_t(m) T = Tm ~ \forall ~m \in \m\},$$
the set of all intertwining operators for $\alpha_t$. It is proved in \cite{alev} that the family $\{E^\alpha_t: t\in (0,\infty)\}$ forms a product system of Hilbert modules over $\m^\prime$. Since our tools are not product system of Hilbert modules in this paper, we do not recall this theory and results from \cite{alev} in full detail. We reproduce the following theorem from \cite{alev}, and make a few definitions and remarks about product systems of Hilbert modules.  All our modules are von Neumann modules, which may be defined as weakly closed (equivalently, strongly closed) subspaces $E\subseteq B(H)$ satisfying $EE^*E\subseteq E$. $E^*E$ is the von Neumann algebra acting on the right and $EE^*$ is the collection of adjointable operators. The inner product is $\ip{x,y}=x^*y$ (see \cite{skiede}, Part I, Chapter 3  or \cite{BMSS} for details).

\begin{thm}\label{alev-module}
$E^\alpha_t$ is full, self-dual $\m^\prime-\m^\prime$ Hilbert bimodule. Its natural $w^*-$topology coincides with the relative $\sigma-$weak topology.
\end{thm}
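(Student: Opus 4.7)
The plan is to verify the bimodule structure, fullness and self-duality in turn, and then identify the two topologies. The first two items reduce to short computations; the heart of the theorem is self-duality, which will follow once we observe that $E^\alpha_t$ is $\sigma$-weakly closed in $B(L^2(\m))$.

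First I would check the $\m'$-$\m'$ bimodule structure. For $T\in E^\alpha_t$ and $x',y'\in\m'$, $\alpha_t(m)\in\m$ commutes with $x'$ and $y'$, so
\[ \alpha_t(m)(x'Ty')=x'(\alpha_t(m)T)y'=x'Tmy'=(x'Ty')m,\]
and $x'Ty'\in E^\alpha_t$. The form $\ip{S,T}:=S^*T$ lands in $\m'$, since for $m\in\m$
\[ mS^*T=(Sm^*)^*T=(\alpha_t(m^*)S)^*T=S^*\alpha_t(m)T=S^*Tm.\]
For fullness, the canonical unit $S_t$ belongs to $E^\alpha_t$, hence so does $S_tx'$ for every $x'\in\m'$; since $S_t$ is an isometry one has $\ip{S_tx',S_ty'}=(x')^*S_t^*S_ty'=(x')^*y'$, so as $x',y'$ range over $\m'$ the inner products already cover $\m'$ algebraically.

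The main obstacle is self-duality. The key observation is that $E^\alpha_t$ is the intersection of the kernels of the $\sigma$-weakly continuous linear maps $T\mapsto\alpha_t(m)T-Tm$, $m\in\m$, and is therefore $\sigma$-weakly closed in $B(L^2(\m))$. By the general theory of von Neumann (Hilbert $W^*$-) modules of Paschke, Rieffel and Skeide, any $\sigma$-weakly closed intertwiner space of this form, equipped with the natural $\m'$-bimodule structure and inner product $\ip{S,T}=S^*T$, is automatically self-dual: given a bounded right-$\m'$-linear map $\phi:E^\alpha_t\to\m'$, one produces a representing element $S_\phi\in E^\alpha_t$ via a Radon-Nikodym-type argument exploiting compactness of the unit ball of $E^\alpha_t$ in the $\sigma$-weak topology. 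The presence of the unit $S_t$ and fullness guarantee that the bimodule has enough vectors for this construction to be carried out.

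Finally I would identify the two topologies. The intrinsic $w^*$-topology on a self-dual Hilbert $W^*$-module $E$ over $\m'$ is generated by the functionals $T\mapsto\rho(\ip{S,T})$ with $S\in E^\alpha_t$ and $\rho\in(\m')_*$, and each such functional is the restriction of a $\sigma$-weakly continuous functional on $B(L^2(\m))$; so the relative $\sigma$-weak topology is at least as strong as the $w^*$-topology. For the reverse inclusion one uses fullness and a predual-density argument to approximate an arbitrary $\sigma$-weakly continuous functional on $E^\alpha_t$ by finite sums of the form $T\mapsto\rho_i((x_i')^*S_t^*T)=\rho_i(\ip{S_tx_i',T})$, showing that the two topologies coincide. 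The hard step throughout is the Radon-Nikodym construction underlying self-duality; all the other statements reduce to routine bookkeeping once that is in place.
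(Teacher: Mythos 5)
The paper itself does not prove this statement; it explicitly reproduces Theorem~\ref{alev-module} from Alevras's paper \cite{alev} ("We reproduce the following theorem from \cite{alev}, and make a few definitions and remarks\dots"), so there is no in-paper argument to compare against. Judged on its own terms your reconstruction is essentially sound and follows the route that the Paschke--Skeide theory of von Neumann modules lays out. The two routine computations (bimodule structure, $S^*T\in\m'$) are correct, and fullness via $\ip{S_t x',S_t}=(x')^*$ is fine. The key move, observing that $E^\alpha_t$ is the common kernel of the $\sigma$-weakly continuous maps $T\mapsto\alpha_t(m)T-Tm$ and hence $\sigma$-weakly closed, is exactly right: together with $E^\alpha_t\,\m'\subseteq E^\alpha_t$ and $(E^\alpha_t)^*E^\alpha_t\subseteq\m'$ this exhibits $E^\alpha_t$ as a concrete von Neumann $\m'$-module in the sense of Skeide/\cite{BMSS}, and such modules are self-dual with $w^*$-topology equal to the inherited $\sigma$-weak topology -- precisely the two nontrivial claims of the theorem. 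Two small cautions. First, the ``Radon--Nikodym'' language for the self-duality proof is informal; the actual mechanism in the cited references is essentially the $\sigma$-weak compactness of bounded balls in a $\sigma$-weakly closed subspace, as you say, so this is a matter of phrasing rather than substance. Second, in the last paragraph you approximate by functionals built from $S_t x_i'$, but $S_t x'=\alpha_t'(x')S_t$ only produces a proper subcollection of $\m' S_t$; the density argument should use arbitrary $S\in E^\alpha_t$ (equivalently all of $[\m'S_t]$, as Lemma~\ref{EtMSt} later shows $E^\alpha_t=[\m'S_t]$), together with the density of $\overline{E^\alpha_t\,H}$-vectors, rather than only $S_t x'$. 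These gaps are minor; the overall structure -- reduce self-duality and the topology identification to $\sigma$-weak closedness and cite the von Neumann module theory -- is the standard and correct approach.
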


We can also define $$ E^{\alpha'}_t = \{ T \in B(L^2(\m)): \alpha_t'(m') T = Tm' ~ \forall m' \in \m'\},$$ the complementary product system or the product system corresponding to the complementary \en-semigroup $\alpha'$. $E^{\alpha'}_t$ satisfies all the properties of $E^\alpha_t$ with $\alpha$ replaced with $\alpha'$. In fact $$E^{\alpha'}_t=JE^\alpha_tJ ~\forall ~t>0.$$

Self-dual bimodules are von Neumann modules. Again we do not require the theory of von Neumann bimodules in full detail here. We only use the following  lemma on von Neumann modules in this section. The reader can refer either to \cite{skiede} or to \cite{BMSS} (Proposition 1.7) for a proof. In this lemma $E_1$, $E$ can be considered as only right von Neumann modules. 

\begin{lem}\label{riez}
If $E_1$ is a (strongly closed) $\m$-submodule of a Hilbert von Neumann $\m-$module $E$ and $E_1 \neq E$. Then there exists a non-zero $y \in E$ such that $y^*x=0$ for all $x \in E_1$. 
\end{lem}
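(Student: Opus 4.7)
My plan is to reduce the lemma to Paschke's Riesz representation theorem for self-dual Hilbert modules. The first step is to observe that $E_1$, being a strongly closed submodule of the self-dual Hilbert von Neumann $\m$-module $E$, is itself self-dual; this is a standard stability property of von Neumann modules, recorded in the monograph \cite{skiede} (and implicit in the orthogonal-complementation machinery discussed around Proposition 1.7 of \cite{BMSS}).

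Given an arbitrary $x_0 \in E \setminus E_1$, I would consider the map $\phi : E_1 \to \m$ defined by $\phi(z) := x_0^{*} z$. A direct check shows that $\phi$ is bounded, with $\|\phi(z)\| \leq \|x_0\| \|z\|$, and right $\m$-linear, since $\phi(z m) = x_0^{*}(z m) = \phi(z) m$. Paschke's Riesz representation theorem, applied to the self-dual module $E_1$, then produces a $y_0 \in E_1$ such that $x_0^{*} z = y_0^{*} z$ for every $z \in E_1$. The element $y := x_0 - y_0$ therefore lies in $E$, satisfies $y^{*} z = 0$ for all $z \in E_1$, and is nonzero since $x_0 \notin E_1$ while $y_0 \in E_1$. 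This gives the required element.

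The main obstacle is the appeal to self-duality of $E_1$, since without it Paschke's representation is not directly available. In the setting at hand this is harmless: for von Neumann modules, strongly closed and weakly closed submodules coincide, and weakly closed submodules of self-dual Hilbert modules inherit self-duality. All of this is developed in the references cited just before the lemma, so the argument above should reduce to a short application of standard machinery.
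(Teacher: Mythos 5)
Your argument is correct and is essentially the standard derivation; in fact the paper itself gives no proof of Lemma \ref{riez} at all, deferring entirely to \cite{skiede} and to Proposition~1.7 of \cite{BMSS}, so there is no "paper proof" to compare against line by line. Given $x_0\in E\setminus E_1$, the functional $\phi(z)=x_0^*z$ on $E_1$ is bounded and right $\m$-linear, self-duality of $E_1$ yields $y_0\in E_1$ with $x_0^*z=y_0^*z$ for all $z\in E_1$, and $y=x_0-y_0$ is a nonzero element of $E_1^{\perp}$: all of this is sound. The only thing I would tighten is the justification that $E_1$ is self-dual. Rather than invoking a "stability property" under passage to closed submodules of a self-dual module, the cleanest route in the framework the paper is using is to note that $E_1$ is itself a von Neumann module: it is by hypothesis a strongly (equivalently weakly) closed linear subspace of $B(H)$, and $E_1E_1^*E_1\subseteq E_1(E^*E)\subseteq E_1\m\subseteq E_1$ since $E_1$ is an $\m$-submodule; self-duality then follows from the general fact that von Neumann modules are self-dual, which is precisely the content of the references cited just before the lemma. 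With that small rephrasing your proposal is a complete and accurate proof, and it is almost certainly the argument the cited sources carry out.
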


The bimodules $E^\alpha_t$ and $E^{\alpha'}_t$ for Clifford flows may be described as follows. This description is used in our computation of $Ind_c(\cdot)$.  Let $H_t= \Gamma_a(L^2((0,t),\k^\C))$. Since the Clifford flow $\{\alpha_t:t\geq 0\}$ is the restriction of the CAR flow on $B(\Gamma_a(L^2((0,\infty),\k^\C)))=B(H)$ to the hyperfinite II$_1$ factor $\mathcal{R}$, it satisfies $$\alpha_t(m)=U_t\left(1_{H_t}\otimes m\right) U_t^*~\forall ~m \in \r.$$ Hence we have $\{T_{\xi_t}:\xi_t\in H_t \}\subseteq E^\alpha_t.$ (Here $U_t$ and $T_{\xi_t}$ are as defined in Remark \ref{additsindex}.) 

Suppose $X_t\in E^\alpha_t$ and $X_t^*T_{\xi_t}=0$ for all $\xi_t \in H_t$, then $$X^*_tT_{\xi_t}\xi=X^*_t (U_t(\xi_t\otimes \xi))=0,~ \forall ~\xi\in H, ~\xi_t \in H_t.$$ This implies that $X_t=0$, thanks to the totality of the set $\{U_t(\xi_t\otimes \xi):\xi\in H, \xi_t \in H_t\}$ in $H$. So by Lemma \ref{riez} we conclude that $E^\alpha_t$ is the Von Neumann (right) $\m'-$Module generated by $\{T_{\xi_t}: \xi_t \in H_t\}$. This is the weak operator closure of right $\m$-linear combinations of $\{T_{\xi_t}: \xi_t \in H_t\}$.

We can also define unitary operator $U_t':H_t\otimes H\mapsto H$ by \begin{align*}U'_t(\xi^t_1\wedge\xi^t_2\cdots \wedge\xi^t_n)\otimes (\xi_1\wedge\xi_2\cdots \wedge\xi_m))&\\=\xi^t_1\wedge\xi^t_2\cdots \wedge\xi^t_n\wedge T_t\xi_1\wedge T_t\xi_2\cdots \wedge T_t\xi_m.& \end{align*}  Then it is easy to verify that $$\alpha'_t(m')=U'_t\left( 1_{H_t}\otimes m'\right){U'_t}^*,$$ and   it follows that $\{T'_{\xi_t}:\xi_t\in H_t \}\subseteq E^{\alpha'}_t,$ where \begin{equation}\label{T'}T'_{\xi_t}(\xi)=U'_t(\xi_t\otimes \xi),~\forall ~\xi \in H.\end{equation} Using an argument similar to  $E^\alpha_t$, thanks to Lemma \ref{riez}, we find  $E^{\alpha'}_t$ is the (right) Von Neumann $\m-$Module generated by $\{T'_{\xi_t}: \xi_t \in H_t\}$.

This can be generalized to any \en-semigroup on a type II$_1$ factor $\m$, which is obtained by restricting an \en-semigroup on $B(L^2(\m)$. In particular the product system of Hilbert modules associated with free flows on $\Phi(\k)$ are described by the product system of Hilbert spaces of the free flow on $B(\Gamma_f(L^2(\R_+; \k^\C))$.

As before $\{S_t\in E^\alpha_t:t\geq 0\}$ denotes the canonical unit for $\{\alpha_t:t\geq 0\}$. We prove the following useful lemma, which will be used in many instances, including our computations of the coupling index for  Clifford flows, even Clifford flows and free flows.

\begin{lem}\label{EtMSt}
$E^\alpha_t= [ \m^\prime S_t ] = {\alpha_t(\m)}^\prime S_t $, where $[\m^\prime S_t ]$
denotes the weak operator closure of $\m^\prime S_t$.
\end{lem}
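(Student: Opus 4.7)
The plan is to prove the two equalities $E^\alpha_t = \alpha_t(\m)'S_t$ and $E^\alpha_t=[\m'S_t]$ separately, since each uses a different feature of the canonical unit $S_t$.

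For the first equality, one direction is immediate: if $X\in\alpha_t(\m)'$ and $m\in\m$, then $\alpha_t(m)(XS_t)=X\alpha_t(m)S_t=XS_t m$, so $XS_t\in E^\alpha_t$. For the reverse, I would use that $S_t$ is an isometry (hence $S_t^*S_t=1$) so that every $T\in E^\alpha_t$ factors as $T=(TS_t^*)S_t$; it then suffices to check $TS_t^*\in\alpha_t(\m)'$. Taking adjoints of the intertwining relation $S_tm=\alpha_t(m)S_t$ gives $S_t^*\alpha_t(m)=mS_t^*$ for all $m\in\m$, so
\begin{equation*}
TS_t^*\alpha_t(m)=TmS_t^*=\alpha_t(m)TS_t^*,
\end{equation*}
which puts $TS_t^*$ in $\alpha_t(\m)'$, as required.

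For the second equality, I would first check that $[\m'S_t]\subseteq E^\alpha_t$: for $m'\in\m'$ and $m\in\m$, $(m'S_t)m=m'\alpha_t(m)S_t=\alpha_t(m)(m'S_t)$, so $\m'S_t\subseteq E^\alpha_t$, and the inclusion of the weak closure follows from the weak closedness of $E^\alpha_t$ (Theorem \ref{alev-module}). For the reverse inclusion, the idea is to apply Lemma \ref{riez} to the right $\m'$-module $E^\alpha_t$ and its submodule $[\m'S_t]$. One verifies $[\m'S_t]$ is indeed right $\m'$-invariant via the identity $S_tm'=\alpha_t'(m')S_t\in\m'S_t$ (established in the preliminary discussion of multi-units). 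As $[\m'S_t]$ is weakly closed and convex, it is strongly closed.

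Now suppose for contradiction that $[\m'S_t]\subsetneq E^\alpha_t$. Lemma \ref{riez} produces a nonzero $y\in E^\alpha_t$ with $y^*x=0$ for every $x\in[\m'S_t]$. In particular $y^*m'S_t=0$ for all $m'\in\m'$, and evaluating at $\Omega$ and using $S_t\Omega=\alpha_t(1)\Omega=\Omega$ yields $y^*m'\Omega=0$ for all $m'\in\m'$. The cyclicity of $\Omega$ for $\m'$ then forces $y^*=0$, contradicting $y\neq0$. Hence $E^\alpha_t=[\m'S_t]$, completing the proof. The only place where there is anything substantive to do is identifying the right framework for Lemma \ref{riez} (ensuring $[\m'S_t]$ is a strongly closed right $\m'$-submodule); once that is in place, the cyclic-and-separating property of $\Omega$ closes the argument immediately.
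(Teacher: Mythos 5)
Your proof is correct, and the verification of $E^\alpha_t = \alpha_t(\m)'S_t$ is done by a genuinely different and more elementary route than the paper's. The paper obtains this equality by invoking Alevras's Proposition 3.4 (that the algebra of adjointable operators on $E^\alpha_t$ equals $\alpha_t(\m)'$), whereas you use the one-line factorization $T = (TS_t^*)S_t$ together with the adjoint intertwining relation $S_t^*\alpha_t(m) = mS_t^*$ to place $TS_t^*$ directly in $\alpha_t(\m)'$. This is cleaner and self-contained, and it means you can derive this equality without any reference to the bimodule/adjointable-operator machinery, whereas the paper treats it as a corollary of external results. (A pleasant side effect is that you no longer need to prove one equality before the other.)

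For $E^\alpha_t = [\m'S_t]$ your argument is essentially identical to the paper's: show $\m'S_t\subseteq E^\alpha_t$, apply Lemma \ref{riez}, and use $S_t\Omega = \Omega$ plus the cyclicity of $\Omega$ for $\m'$ to rule out a nonzero $\perp$-element. The only step the paper includes that you suppress is the verification that $F_t := [\m'S_t]$ has full $\m'$-valued inner product, i.e.\ $F_t^*F_t = \m'$, which they establish via the chain $\m' = S_t^*\alpha'_t(\m')S_t \subseteq S_t^* \m' S_t \subseteq \m'$. In the paper's von Neumann module framework (where the module algebra is defined as $E^*E$), this check confirms that $F_t$ is an $\m'$-submodule in the precise technical sense before Lemma \ref{riez} is applied; you assert only right $\m'$-invariance and strong closedness. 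Since $F_t^*F_t\subseteq\m'$ is automatic from $F_t\subseteq E^\alpha_t$, and the referenced BMSS Proposition 1.7 is stated for strongly closed right submodules, your shortened version stands, but it is worth knowing the paper regards the fullness verification as part of certifying the hypotheses of Lemma \ref{riez}.
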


\begin{proof}
Theorem \ref{alev-module} assures us that  $E^\alpha_t$ is a $\m^\prime-\m'$ Hilbert von Neumann bimodule. Since $S_t \in E^\alpha_t$ it follows that $[ \m^\prime S_t ] \subseteq E^\alpha_t.$ We first verify that $F_t=[ \m^\prime S_t ]$ is a
$\m^\prime$-Hilbert von Neumann submodule of $E^\alpha_t$. Notice that $ F_t^*F_t =
[S_t^*\m^\prime S_t].$ 
We only need to verify $F_t^*F_t=\m^\prime $ (see \cite{BMSS} for details). 
  
For any $m^\prime \in \m'$ we have $$S_t^*m'S_t m = S_t^*m' \alpha_t(m)S_t =mS_t^*m'S_t,$$ and hence
$S_t^*\m'S_t  \subseteq \m^\prime$.
On the other hand $$m' = S_t^*S_t m^\prime = S_t^*\alpha_t^\prime
(m^\prime)S_t,$$  where $\{\alpha_t^\prime\}$ is the complementary \en-semigroup on $\m^\prime$ corresponding to $\{\alpha_t\}$.
So it also follows that $$\m' \subset S_t^*\alpha_t^\prime(\m^\prime)S_t
\subset S_t^*\m'S_t.$$ 
We have verified that $\m' = F_t^*F_t$ and that $F_t$ is a von Neumann submodule of $E^\alpha_t$.

Now suppose there exists a  
$T \in  E^\alpha_t$ and  
$T \perp \m^\prime S_t$ with respect to the $\m'$ valued inner product of  $E^\alpha_t$, i.e., $T^*m^\prime S_t = 0 $ 
for all $m^\prime \in \m^\prime $. 
Then 
$T^*m^\prime S_t\Omega = T^*m^\prime \Omega =0$. Since $T^*$ vanishes on a total set, we conclude that 
$T= 0$. Now we deduce from Lemma \ref{riez} that  
$E^\alpha_t = [\m^\prime S_t]$.

To prove the the second equality, we refer to \cite{alev}(see Proposition 3.4), where it is proved that the space of all adjointable operators $B(E^\alpha_t)$ on the Hilbert von Neumann module $E^\alpha_t$ is equal to ${\alpha_t(\m)}^\prime$. This in particular implies that $E^\alpha_t= {\alpha_t(\m)}^\prime S_t.$   \end{proof}

It is proved in \cite{alev} that $\{E^\alpha_t: t \geq 0\}$ forms a product system of $\m'-\m'$ modules with respect to the tensor product defined by operator multiplication,  and that it is a complete invariant for the \en-semigroup $\alpha$. That is two \en-semigroups $\alpha$ and $\beta$ are cocycle conjugate if and only if the corresponding product system of Hilbert modules $\{E^\alpha_t:t \geq 0\}$ and $\{E^\beta_t: \geq 0\}$ are isomorphic. An isomorphism between product system of Hilbert modules is a family of $\m'-\m'-$linear module isomorphisms, satisfying some measurability conditions, which respects the tensor products structure induced by operator multiplication.  

For a single fixed  \en-semigroup $\alpha$, automorphisms  of $E^\alpha$ are given by the gauge cocycles of $\alpha$. In fact, for every gauge cocycle $U$ of $\alpha$, $U_t$ acts on the left of $E^\alpha_t$ as an adjointable operator since it is contained in  $\alpha_t(\m)'$ (see \cite{alev}) and it is left $\m'-$linear since $\{U_t:t\geq 0\}\subseteq \m$. This gives rise to an automorphism of $E^\alpha$. Conversely, for any given automorphism $\phi_t:E^\alpha_t\mapsto E^\alpha_t$, the proof of 3.12 of \cite{alev} implies that there is a unique $\alpha$-cocycle $\{U_t:t\geq 0\}$ satisfying $U_tT= \phi_t(T)$. Further $$U_t\alpha_t(m) T=U_tTm=\phi_t(T)m=\alpha_t(m)\phi_t(T)=\alpha_t(m)U_tT,$$ for all $T\in E^\alpha_t, m\in \m$ and $t\geq 0$. Now Lemma \ref{EtMSt}   implies that $U_t\in \alpha_t(\m)'$, hence is a gauge cocycle.

\begin{rem}
Our computations in Section \ref{gauge} imply that the product systems of Hilbert modules associated with Clifford flows, even Clifford flows and free flows do not admit any non-trivial automorphisms.  
\end{rem}

\begin{prop}\label{tensors}
If $\alpha$ and $\beta$ \en-semigroups are on II$_1$ factors $\m_1$ and $\m_2$ respectively, then $$E^{\alpha\otimes \beta}_t= E^\alpha_t\otimes E^\beta_t~\forall ~t >0.$$
\end{prop}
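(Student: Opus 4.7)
The plan is to establish both inclusions. The easy direction is $E^\alpha_t \otimes E^\beta_t \subseteq E^{\alpha\otimes\beta}_t$: for any elementary tensors $T_1 \in E^\alpha_t$ and $T_2 \in E^\beta_t$, and for $m_1 \in \m_1$, $m_2 \in \m_2$, one computes
\[
(\alpha_t\otimes\beta_t)(m_1\otimes m_2)(T_1\otimes T_2) = \alpha_t(m_1)T_1 \otimes \beta_t(m_2)T_2 = T_1m_1 \otimes T_2m_2 = (T_1\otimes T_2)(m_1\otimes m_2),
\]
so elementary tensors intertwine on the $w^*$-dense subalgebra $\m_1\odot\m_2$, and by $w^*$-continuity on $\m_1\overline\otimes\m_2$; taking the weak closure of spans gives the inclusion.

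For the reverse inclusion, I would invoke Lemma \ref{EtMSt}. First observe that the canonical unit for $\alpha\otimes\beta$ (acting on $L^2(\m_1)\otimes L^2(\m_2)$ with trace vector $\Omega_1\otimes\Omega_2$) is the tensor product $S^\alpha_t\otimes S^\beta_t$ of the canonical units. Indeed, both semigroups preserve the respective trace vectors, and $(S^\alpha_t\otimes S^\beta_t)(x\otimes y)(\Omega_1\otimes \Omega_2) = \alpha_t(x)\Omega_1\otimes\beta_t(y)\Omega_2 = (\alpha_t\otimes\beta_t)(x\otimes y)(\Omega_1\otimes\Omega_2)$, and this uniquely characterises the canonical unit. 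Applying Lemma \ref{EtMSt} to $\alpha\otimes\beta$ and using Tomita's commutation theorem $(\m_1\overline\otimes\m_2)' = \m_1'\overline\otimes\m_2'$, we obtain
\[
E^{\alpha\otimes\beta}_t = [(\m_1\overline\otimes\m_2)'(S^\alpha_t\otimes S^\beta_t)] = [(\m_1'\overline\otimes\m_2')(S^\alpha_t\otimes S^\beta_t)].
\]

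Now the subalgebra $\m_1'\odot\m_2'$ is $w^*$-dense in $\m_1'\overline\otimes\m_2'$, and for elementary tensors we have $(m_1'\otimes m_2')(S^\alpha_t\otimes S^\beta_t) = m_1' S^\alpha_t \otimes m_2' S^\beta_t$. Hence the weak closure of the span of these elements is, by definition of the spatial tensor product of von Neumann modules, exactly $[\m_1' S^\alpha_t]\otimes [\m_2' S^\beta_t]$. Applying Lemma \ref{EtMSt} once more on each factor identifies this with $E^\alpha_t\otimes E^\beta_t$, completing the proof.

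The main technical point to be careful about is the interplay between the weak closures taken on the tensor product side versus on each factor separately; this amounts to showing that $[A_1\otimes A_2] = [A_1]\otimes[A_2]$ for subspaces $A_i\subseteq B(H_i)$ containing an identity-like element, which is standard once one writes the spatial tensor product as the weak closure of $A_1\odot A_2$ in $B(H_1\otimes H_2)$. The existence of the canonical units $S^\alpha_t$, $S^\beta_t$ inside the respective modules makes this identification immediate and avoids any subtler bimodule tensor product construction.
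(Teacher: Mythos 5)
Your proof is correct, and it takes a genuinely different route from the paper's for the reverse inclusion. The paper also begins by granting the easy inclusion $E^\alpha_t\otimes E^\beta_t\subseteq E^{\alpha\otimes\beta}_t$, but then invokes Lemma~\ref{riez} (the Riesz-type orthogonality lemma for von Neumann modules): if the inclusion were strict, one could find a nonzero $X\in E^{\alpha\otimes\beta}_t$ with $X^*(T\otimes S)=0$ for all $T\in E^\alpha_t$, $S\in E^\beta_t$; testing against $T=m_1'S^\alpha_t$, $S=m_2'S^\beta_t$ and applying to $\Omega$ forces $X^*(m_1'\otimes m_2')\Omega=0$ for all commutant elementary tensors, hence $X=0$, a contradiction. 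Your approach skips the orthogonality detour and computes both sides of the identity directly: using Lemma~\ref{EtMSt} and Tomita's commutation theorem you rewrite $E^{\alpha\otimes\beta}_t=[(\m_1'\overline{\otimes}\m_2')(S^\alpha_t\otimes S^\beta_t)]$, and then use the $\sigma$-weak density of $\m_1'\odot\m_2'$ to reduce to elementary tensors. This is cleanly correct, though the Kaplansky step deserves a sharper statement than the one you give: the needed fact is not really about abstract subspaces \emph{with an identity-like element}, but simply that $\m_1'\odot\m_2'$ is a $*$-subalgebra $\sigma$-weakly dense in $\m_1'\overline{\otimes}\m_2'$, so Kaplansky density supplies \emph{bounded} strongly convergent approximating nets, and right multiplication by the fixed operator $S^\alpha_t\otimes S^\beta_t$ then carries them to strongly convergent nets. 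That is all one needs; the identity $[A_1\odot A_2]=[[A_1]\odot[A_2]]$ is a consequence, not an input. In exchange for this extra care, your argument avoids Lemma~\ref{riez} entirely and makes the submodule identification completely explicit, which is a real pedagogical gain; the paper's version is shorter but leans on the self-duality machinery behind the Riesz lemma.
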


\begin{proof}
Clearly for $T \in E^{\alpha}_t$ and $S \in E^{\beta}_t$ $T\otimes S \in E^{\alpha\otimes \beta}_t$. So it follows that $E^{\alpha}_t\otimes E^{\beta}_t \subseteq E^{\alpha\otimes \beta}_t$ for all $t >0$. On the other hand suppose there exists a $X \in E^{\alpha\otimes \beta}_t$ such that $X^*(T\otimes S)=0$ for all $T \in E^{\alpha}_t$ and $S \in E^{\beta}_t$, then $X^*(m_1'\otimes m_2')\Omega=0$ for any $m_1'\in\factor_1'$ and $m_2'\in\factor_2'$, thanks to Lemma \ref{EtMSt}. Hence $X=0$. Now the rest of the proof follows from Lemma \ref{riez}.   \end{proof}

Due to the existence of the canonical unit all \en-semigroups on a type II$_1$ factor are spatial, that is they admit a unit. We can define complete spatiality as follows. 

\begin{defn}\label{I}
An \en-semigroup is said to be type I, or completely spatial, if there exists a subset $S\subset \mathcal{U}_\alpha$ $$E^\alpha_t=[u_{t_1}m'_1u_{t_2}\cdots u_{t_n}m'_n: u_{t_i} \in S, m'_i\in \m', t_1+\cdots+ t_n=t, n \in \N ],$$ where $[\cdot]$ denotes the weak operator closure of the span. Otherwise it is said to be type II.
\end{defn}

We know that the CAR flow on $B(\Gamma_a(L^2((0,\infty),\k^\C)))$ is type I, so we do have enough units in the associated product system of Hilbert spaces $\{H_t\}$. Since the product system of Hilbert modules associated with the Clifford flow,  are the (right) $\m'-$modules generated by $\{T_{\xi_t}: \xi_t\in H_t\}$, it is easy to see that the condition in Definition \ref{I} is satisfied, with $S$ being the set of all units of the CAR flow. This show that Clifford  flows are of type I. 

It is shown in \cite{freesemigroups}, that the free flows on $B(\Gamma_f(L^2((0,\infty), \k^\C))$ are type I, for any $\dim(\k)$. Arguing as for Clifford flows, we get that free flows of any rank on $\Phi(\k)$ are of type I.  In fact, the above argument can be extended to prove that any \en-semigroup on a II$_1$ factor $\m$, which is the restriction of a type I \en-semigroup on $B(L^2(\m))$, is type I.

\begin{prop}\label{EM}
Let $\alpha$ be a  \en-semigroup on a II$_1$ factor $\m$, which is not an automorphism of $\m$.  Then $$E^\alpha_t \cap \m = \emptyset.$$
\end{prop}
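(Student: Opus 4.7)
The statement should be read as $E^\alpha_t\cap\m=\{0\}$ (the zero operator lies in both intersects trivially), and my strategy is to argue the contrapositive: a nonzero element of $E^\alpha_t\cap\m$ would implement $\alpha_t$ as an inner automorphism of $\m$, contradicting the hypothesis.

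First I would fix $T\in E^\alpha_t\cap\m$ with $T\neq 0$. The defining intertwining relation $\alpha_t(m)T=Tm$ $(m\in\m)$, after taking adjoints and relabelling, also gives $T^*\alpha_t(m)=mT^*$ for all $m\in\m$. Combining the two identities yields
$$T^*T\,m \;=\; T^*\alpha_t(m)\,T \;=\; m\,T^*T \qquad (m\in\m).$$
Since $T\in\m$ we have $T^*T\in\m$, and the display above places $T^*T$ in the commutant $\m'$ as well. Hence $T^*T\in Z(\m)=\C 1$, and because $T\neq 0$ we obtain $T^*T=c\cdot 1$ for some $c>0$.

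Next, set $V=T/\sqrt{c}$. Then $V$ is an isometry lying in the II$_1$ factor $\m$, and by finiteness of $\m$ every isometry in $\m$ is unitary, so $V\in\m$ is unitary. Substituting into the intertwining relation gives $\alpha_t(m)=VmV^*$ for all $m\in\m$, i.e.\ $\alpha_t=\mathrm{Ad}(V)$ is the inner automorphism implemented by $V\in\m$. In particular $\alpha_t(\m)=\m$, contradicting the hypothesis that $\alpha$ is not an automorphism of $\m$ (which, in the light of Definition 2.1(ii), I read as $\alpha_t(\m)\neq\m$).

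The argument is short, so there is no serious technical obstacle; the only delicate point is conceptual, namely that finiteness of $\m$ is doing all the work. Without it the isometry $V$ could be proper, the map $\mathrm{Ad}(V)$ would merely embed $\m$ into $\m$, and the conclusion would fail. This is precisely the mechanism by which shift-type intertwiners can sit inside $\m$ for type~I factors, and explains why Proposition~\ref{EM} is genuinely a II$_1$ phenomenon.
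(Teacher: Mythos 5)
Your proof is correct and follows essentially the same route as the paper's: show $T^*T\in\m\cap\m'=\C 1$, normalize to an isometry in $\m$, and invoke finiteness together with the non-automorphism hypothesis to reach a contradiction. The only cosmetic difference is that you apply finiteness first to force unitarity and then contradict the hypothesis, whereas the paper splits into the unitary and proper-isometry cases and rules each out separately; the underlying ideas are identical.
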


\begin{proof}
Fix $t>0$. Suppose $m_t\in E^\alpha_t\cap \m$ is an intertwiner for $\alpha_t$ then $m_t^*m_t\in \m \cap \m'$. Without loss of generality, after dividing by a scalar if needed, we assume that $m_t^*m_t=1$. If $m_t$ is unitary,  we would have $\alpha_t(m) =m_tmm_t^*$ for all $m \in m$. Since $m_t \in \m$, this means $\alpha_t$ is an (inner) automorphism, contradicting our assumption. So $m_t$ is a isometry in $\m$, which is not a unitary. But this is not possible since $\m$ is a finite factor.   \end{proof}

\begin{rem}\label{type I remark}
Here we have defined type I using right $\m'$-linear combinations  of products of units, and we expect our definition to accommodate a rich theory with both type I and type II examples. The other possible definition of type I, using left $\m'$-linear combinations of tensor products of units, is vacuous. Under this definition, all product systems of Hilbert modules associated to \en-semigroups on II$_1$ factors are of type I. This follows immediately from lemma \ref{EtMSt}.

In \cite{tips} a different definition of type I is given. Namely, that the product system is generated by ``continuous units'' (see \cite{tips} for precise definitions). Under this condition no product system of Hilbert modules associated to an \en-semigroup on a \twoone factor is type I. This follows from the fact that type I product systems in \cite{tips} admit a central unital unit, whereas the product systems associated to \en-semigroups on \twoone factors contain no central elements, by Proposition \ref{EM}. In fact the associated CP semigroup of any unit in $E^\alpha_t$ cannot be uniformly continuous, as defined and assumed in the main results of \cite{tips}.
\end{rem}

Now we describe the super product system for an \en-semigroup $\alpha$ on a II$_1$-factor $\m$.  For every $t > 0 $, let $H^\alpha_t = E^\alpha_t \cap E^{\alpha'}_t$. Then the operator norm on $H^\alpha_t$ arises from an inner product and $H^\alpha_t$ is a Hilbert space with respect to that inner product. In fact, for $ S , T \in H^\alpha_t $ and $ m\in M , m' \in M^\prime$, we have: 
\[
T^*Sm= T^* \alpha_t(m)S = (\alpha_t( m^*) T)^*S = (Tm^*)^* S = mT^*S, 
\]
since $S, T \in E^\alpha_t$; on the other hand  we also have 
\[
 T^*Sm'  = T^*  \alpha^\prime_t(m')S = (\alpha^\prime _t( m^{\prime *}) T)^*S = (T m^{\prime *})^*S = m' T^*S, 
\]
since $ S, T \in E^{\alpha'}_t$. Since $M$ is factor, $T^*S$ commutes with all operators in $B(L^2(\m))$. So $T^*S$ is a scalar multiple of the identity and we define $\ip{S,T}_t$ by
$T^*S = \ip{S,T}_t  1.$ 
The operator norm norm on $H^\alpha_t$ coincides with the norm given by this inner product since $$\|T\|^2=\|T^*T\|=\|\ip{T,T}_t1\|=\ip{T,T}_t.$$ Since both $E^\alpha_t$ and $E^{\alpha'}_t$ are closed under operator norm, $H^\alpha_t$ is also closed under operator norm. Hence $H_t^\alpha$ is Hilbert space with respect to this inner product.

The fact that both $E^{\alpha}_s$ and $ E^{\alpha'}_t$ are product systems of Hilbert modules, imply for $X_s\in H^\alpha_s $ and $ X_t\in H^\alpha_t$, that  $X_s X_t \in  H^\alpha_{s+t}$, that is $H^\alpha_s H^\alpha_t\subseteq  H^\alpha_{s+t}.$ Further  for $X_s, Y_s\in H^\alpha_s $ and $ X_t, Y_t\in H^\alpha_t$, \begin{align*}  \ip{X_sX_t,Y_sY_t}_{s+t} 1 &= X_t^*(X_s^*Y_s)Y_t\\ & = \ip{X_s,Y_s}_s X_t^*Y_t\\ & =\ip{X_s,Y_s}_s\ip{X_t,Y_t}_t1.\end{align*}
We have verified that the map
 $U_{s,t} : H_t \otimes H_s \mapsto H_{s+t}$,
given by
$$U_{s,t} ( Y_s \otimes Y_t) = Y_sY_t, ~~Y_s \in H_s, Y_t \in H_t$$ is an isometry. We have the following theorem.

\begin{thm}
Let $\alpha=\{\alpha_t: t\geq 0\}$ be an \en-semigroup on a II$_1$ factor $\m$, then $\{H_t^\alpha, U_{s,t}\}_{s,t \in (0,\infty)}$ is a super product system. 
\end{thm}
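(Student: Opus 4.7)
The plan is to verify the two remaining axioms in Definition \ref{superproductsystem}, since the preceding discussion has already shown that each $H^\alpha_t$ is a Hilbert space and that $U_{s,t}\colon H^\alpha_s\otimes H^\alpha_t\to H^\alpha_{s+t}$ is a well-defined isometry. What remains is associativity of $U_{s,t}$ and the existence of a compatible standard Borel structure on $\mathcal{H}^\alpha=\{(t,\xi_t):t>0,\xi_t\in H^\alpha_t\}$.

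Associativity is immediate and will be the first step. Since $U_{s,t}$ is defined on elementary tensors by operator multiplication in $B(L^2(\m))$, and operator multiplication is associative, for $X_i\in H^\alpha_{s_i}$ ($i=1,2,3$) one computes
\[
U_{s_1,s_2+s_3}(X_1\otimes U_{s_2,s_3}(X_2\otimes X_3))=X_1(X_2X_3)=(X_1X_2)X_3=U_{s_1+s_2,s_3}(U_{s_1,s_2}(X_1\otimes X_2)\otimes X_3).
\]
Bilinearity and continuity of $U_{s,t}$ as an isometry then allow extension from elementary tensors to the whole of $H^\alpha_{s_1}\otimes H^\alpha_{s_2}\otimes H^\alpha_{s_3}$, giving the associativity axiom.

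For measurability, I would piggy-back on the fact that, by Theorem \ref{alev-module} and the construction in \cite{alev}, the product system of Hilbert modules $\{E^\alpha_t:t>0\}$ already carries a compatible standard Borel structure, as does $\{E^{\alpha'}_t:t>0\}$ (using $E^{\alpha'}_t=JE^\alpha_tJ$). The map $(t,T)\mapsto (t,JTJ)$ is a Borel isomorphism between the two total spaces, so one may view the bundle $\bigsqcup_t E^{\alpha'}_t$ as a Borel structure sitting over the same parameter space. Then $\mathcal{H}^\alpha$ is carved out inside $\bigsqcup_t E^\alpha_t$ by the countable collection of conditions $\alpha'_t(m'_n)T=Tm'_n$, where $(m'_n)_{n\in\N}$ is a sequence which is strongly dense in the unit ball of $\m'$; each such condition is Borel because $t\mapsto \alpha'_t(m'_n)$ is weak-$*$ continuous and multiplication is separately Borel. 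Thus $\mathcal{H}^\alpha$ is a Borel subset of a standard Borel space, hence is itself standard Borel. Compatibility of this structure with the projection $p$, the fibrewise inner products (which are just $T^*S$, Borel in $(t,S,T)$), and the tensor product operation $U_{s,t}$ (which is the restriction of operator multiplication, already Borel as a map of the total space of $E^\alpha$) is then automatic from the corresponding compatibility on $\{E^\alpha_t\}$.

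The main obstacle, such as it is, lies in the measurability verification: one must be careful that the set-theoretic intersection $E^\alpha_t\cap E^{\alpha'}_t$ indeed inherits a \emph{standard} Borel structure rather than merely a Borel one, and that the operations are jointly Borel in the two variables rather than only separately. Both concerns are resolved by the descriptive-set-theoretic fact that a Borel subset of a standard Borel space is standard Borel in its relative structure, combined with the separate weak-$*$ continuity of the operations $s\mapsto\alpha_s$, $s\mapsto\alpha'_s$ and the uniform boundedness of the intertwiners on any fibre $H^\alpha_t$ (so that joint Borel measurability follows from separate measurability on bounded sets).
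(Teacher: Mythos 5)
Your proposal is correct and matches the paper's approach: the paper likewise dispatches associativity as immediate from associativity of operator composition, and for measurability simply remarks that one argues ``in an exactly similar manner to product systems'' (citing Arveson, Theorem 2.4.7). You have in effect unpacked that remark by realizing $\mathcal{H}^\alpha$ as a Borel subset of the ambient bundle $\bigsqcup_t E^\alpha_t$ via countably many intertwining conditions, which is a reasonable way to carry out what the paper leaves implicit; the only caveat worth noting is that the claim that $\{E^\alpha_t\}$ already carries a compatible standard Borel structure is itself being taken on faith from Alevras, and your joint--Borel--measurability observation for $(t,T)\mapsto \alpha'_t(m'_n)T - Tm'_n$ should be justified by passing to norm-bounded fibres and using that $\sigma$-weak continuity of $\alpha'$ upgrades to strong continuity there.
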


\begin{proof}
The associativity axiom follows immediately from the associativity of multiplication of operators. The measurability axiom can be proved in an exactly similar manner to product systems, as given in \cite{arveson} (see Theorem 2.4.7, page 37).   \end{proof}

\begin{rems} $\{U_t: t\geq 0\}$ is a multi-unit for an \en-semigroup $\alpha_t$ on II$_1$ factor if and only if $\{U_t:t\geq 0$ forms a unit for the super product system $(H_t^\alpha,U_{s,t})$. (For semigroups measurability is equivalent to strong continuity.)
\end{rems}

The above described concrete super product systems forms an invariant for \en-semigroups on II$_1$ factors.

\begin{thm}\label{sps invariant}
If two \en-semigroups $\alpha$ and $\beta$ acting on a II$_1 $ factor $\m$ are cocycle conjugate, then the corresponding super product systems $\{H_t^\alpha: t\geq 0\}$ and $\{H_t^\beta: t\geq 0\}$ are isomorphic.
\end{thm}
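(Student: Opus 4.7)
The plan is to decompose cocycle conjugacy into its two constituent moves (Definition \ref{conjugacy def}): replacing $\alpha$ by a cocycle-perturbed semigroup $\alpha^U$, and passing to a conjugate semigroup via a $*$-isomorphism $\theta$. In each case I will exhibit an explicit Hilbert space unitary $H_t^\alpha \to H_t^\beta$, verify it respects the defining intertwining conditions of $H_t^\cdot$ on both sides, and check compatibility with the product maps $U_{s,t}$; composing the two isomorphisms proves the theorem.

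For the conjugacy step, suppose $\theta:\m\to\n$ is a trace-preserving $*$-isomorphism with $\beta_t = \theta\circ\alpha_t\circ\theta^{-1}$, and let $V:L^2(\m)\to L^2(\n)$ be the unitary extending $\theta$ and sending $\Omega_\m$ to $\Omega_\n$. Uniqueness of the modular conjugation for a trace vector gives $VJ_\m = J_\n V$, so $V$ also intertwines $\alpha'$ with $\beta'$. The map $T\mapsto VTV^*$ then carries $E_t^\alpha\to E_t^\beta$ and $E_t^{\alpha'}\to E_t^{\beta'}$, hence restricts to an isomorphism $H_t^\alpha\to H_t^\beta$. Multiplicativity across $U_{s,t}$, unitarity with respect to the scalar-valued inner product (since $(VTV^*)^*(VSV^*)=VT^*SV^*=\ip{T,S}1$), and measurability are then straightforward.

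For the cocycle step, suppose $\beta=\alpha^U$. Set $v_t := JU_tJ$; by the first Proposition of Section \ref{multi-units} this is an $\alpha'$-cocycle and $\beta'_t(\cdot) = v_t\alpha'_t(\cdot)v_t^*$. Define $\Psi_t(T) := v_tU_tT$. I would then verify: (i) $\Psi_t(T)\in E_t^\beta$, using that $U_t,\alpha_t(m)\in\m$ commute with $v_t\in\m'$, so $\beta_t(m)\Psi_t(T) = v_tU_t\alpha_t(m)T = \Psi_t(T)m$; (ii) $\Psi_t(T)\in E_t^{\beta'}$, using that $U_t\in\m$ commutes with $\alpha'_t(m')\in\m'$; (iii) $\Psi_t$ is unitary because $v_tU_t$ is unitary, and an inverse is given by $T'\mapsto U_t^*v_t^*T'$.

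The main obstacle will be verifying that $\Psi$ is compatible with the super product structure, i.e.\ that
\[ W_{s+t}T_sT_t \;=\; W_sT_s\cdot W_tT_t, \qquad W_t:=v_tU_t, \]
for $T_s\in H_s^\alpha$, $T_t\in H_t^\alpha$. This requires using both intertwining properties of $T_s$ simultaneously: the identities $T_sv_t=\alpha'_s(v_t)T_s$ (from $T_s\in E_s^{\alpha'}$) and $T_sU_t=\alpha_s(U_t)T_s$ (from $T_s\in E_s^\alpha$) push $v_t$ and $U_t$ across $T_s$, after which the cocycle identities $v_{s+t}=v_s\alpha'_s(v_t)$ and $U_{s+t}=U_s\alpha_s(U_t)$, together with the commutation of $\m$ and $\m'$, reassemble the left-hand side into $v_{s+t}U_{s+t}T_sT_t$. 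Measurability of $\Psi$ follows from strong continuity of $t\mapsto W_t$, so composing with the conjugacy isomorphism of the previous paragraph yields the required isomorphism of super product systems.
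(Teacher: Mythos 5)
Your proposal is correct and takes essentially the same route as the paper: split cocycle conjugacy into conjugation (handled by $T\mapsto VTV^*$, using $VJ_\m=J_\n V$) and cocycle perturbation (handled by $T\mapsto U_tJU_tJ\,T$, which equals your $v_tU_tT$ since $U_t\in\m$ and $v_t=JU_tJ\in\m'$ commute), and verify multiplicativity via the two intertwining relations of $T_s$ together with the cocycle identities for $U$ and $v$. The paper's displayed computation for compatibility with $U_{s,t}$ is exactly the chain of identities you describe.
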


\begin{proof}
If $\beta$ is replaced with a conjugate version, we get an isomorphic super product system given by the isomorphism $T\mapsto Ad_U(T)$, where $U$ is the unitary operator which implements the conjugacy (see Section \ref{pre}). So without loss of generality we assume $\m$ acts standardly on $L^2(\m)$ and $\alpha$ and $\beta$ are cocycle perturbations of each other. Let $U_t$ be an $\alpha$-cocycle such that $\beta_t(\cdot)=U_t\alpha_t(\cdot)U_t^*$. Then $JU_tJ$ is an $\alpha'-$cocycle. Now define $V_t: H_t^\alpha\mapsto H_t^\beta$ by $$V_t(T)=U_tJU_tJT,~~\forall ~T \in H_t^\alpha.$$ $V_t$ provides the required isomorphism. Indeed for $T \in H_t^\alpha,$ $$\beta_t(m) V_t(T)= U_t\alpha_t(m)U_t^*U_tJU_tJT=U_tJU_tJ \alpha_t(m)T=V_t(T)m,$$ \begin{align*}\beta_t'(m') V_t(T) &= JU_tJ\alpha_t'(m')JU_t^*JJU_tJU_tT\\ & =JU_tJU_t\alpha_t'(m')T=V_t(T)m',\end{align*}  for all $m \in \m, m'\in \m'$. This implies $V_tH_t^\alpha\subseteq H^\beta_t$. By reversing role of $\alpha$ and $\beta$ with $V_t^*$ the opposite inclusion also follows. It is easy to verify $V_t$ is unitary and provides a Borel map between the super product systems. The equation \ref{prodiso} can be checked as follows. For $T_s \in H^\alpha_s, T_t\in H^\beta_t$, \begin{align*} V_{s+t}U^\alpha_{s,t}(T_s \otimes T_t) &= V_{s+t}T_sT_t\\ &= U_{s+t}JU_{s+t}JT_sT_t\\ &=U_s\alpha_s(U_t)JU_sJ\alpha_s'(JU_tJ)T_sT_t\\&= U_sJU_sJT_s U_tJU_tJT_t\\ &=U_{s,t}^\beta(U_sJU_sJT_s \otimes U_tJU_tJT_t)\\ &=
 U_{s,t}^\beta (V_s \otimes V_t)(T_s\otimes T_t)\end{align*} So we do have $(H^\alpha,U^\alpha_{s,t})\cong (H^\beta,U^\beta_{s,t}))$.   \end{proof}

%\begin{rems}
%Unlike `product systems', super product systems do not form a complete invariant for \en-semigroups on II$_1$ factors. Indeed we will see that Clifford flows and the Even Clifford flows have the same super products as their invariant, but they are not cocycle conjugates.  
%\end{rems}

Recall that the $*-$preserving property of $\alpha_t$ implies that $J$ and $S_t$ commute for all $t\geq 0$. Now  Lemma \ref{EtMSt} implies \begin{equation}\label{HEJ} H^\alpha_t = \left(\alpha_t(\m)'\cap J\alpha_t(\m)'J\right)S_t.\end{equation}

\begin{prop}
If $\alpha$ and $\beta$ are two \en-semigroups on II$_1$ factors $\m_1$ and $\m_2$ respectively. Then $$H^\alpha_t\otimes H^\beta_t=H^{\alpha\otimes \beta}_t~\forall ~t>0.$$
\end{prop}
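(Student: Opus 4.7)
My plan is to establish the two inclusions of $H^\alpha_t \otimes H^\beta_t = H^{\alpha\otimes\beta}_t$ separately. For the forward inclusion, I would first verify the compatibility $(\alpha\otimes\beta)'_t = \alpha'_t \otimes \beta'_t$, which follows directly from the factorization $J_{\m_1 \overtimes \m_2} = J_1 \otimes J_2$ of the modular conjugation together with the defining formula $\mu'_t(x') = J\mu_t(JxJ)J$. With this in hand, for $T \in H^\alpha_t$ and $S \in H^\beta_t$ the operator $T\otimes S$ intertwines both $\alpha_t\otimes\beta_t$ and $(\alpha\otimes\beta)'_t$ with the identity, hence lies in $H^{\alpha\otimes\beta}_t$; and the computation
$$(T_1\otimes S_1)^*(T_2\otimes S_2) = \ip{T_2,T_1}_\alpha \ip{S_2,S_1}_\beta \cdot 1$$
shows the resulting map $H^\alpha_t \otimes H^\beta_t \to H^{\alpha\otimes\beta}_t$ is isometric for the natural Hilbert space inner products, yielding the inclusion.

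For the reverse inclusion I would rely on equation~(\ref{HEJ}), namely $H^\alpha_t = N^\alpha_t S^\alpha_t$ with $N^\alpha_t := \alpha_t(\m_1)' \cap J_1\alpha_t(\m_1)'J_1$. Since $S^\alpha_t \otimes S^\beta_t$ is the canonical multi-unit for $\alpha\otimes\beta$, the same equation applied to the tensor semigroup gives $H^{\alpha\otimes\beta}_t = N^{\alpha\otimes\beta}_t(S^\alpha_t \otimes S^\beta_t)$, and the commutation theorem $(\alpha_t(\m_1)\overtimes\beta_t(\m_2))' = \alpha_t(\m_1)'\overtimes\beta_t(\m_2)'$ together with $J_{12}=J_1\otimes J_2$ rewrites
$$N^{\alpha\otimes\beta}_t = (\alpha_t(\m_1)'\overtimes\beta_t(\m_2)') \cap (J_1\alpha_t(\m_1)'J_1 \overtimes J_2\beta_t(\m_2)'J_2).$$
Invoking the intersection identity $(A_1\overtimes A_2)\cap(B_1\overtimes B_2) = (A_1\cap B_1)\overtimes(A_2\cap B_2)$ for tensor products of von Neumann algebras then simplifies this to $N^{\alpha\otimes\beta}_t = N^\alpha_t \overtimes N^\beta_t$.

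Having established $H^{\alpha\otimes\beta}_t = (N^\alpha_t \overtimes N^\beta_t)(S^\alpha_t\otimes S^\beta_t)$, I would close via orthogonality: the image $H^\alpha_t \otimes H^\beta_t$ is already a closed subspace of the Hilbert space $H^{\alpha\otimes\beta}_t$, so it suffices to show that any $X \in H^{\alpha\otimes\beta}_t$ orthogonal to it must vanish. Writing $X = m(S^\alpha_t\otimes S^\beta_t)$ with $m \in N^\alpha_t \overtimes N^\beta_t$, orthogonality of $X$ to every $(n^\alpha S^\alpha_t)\otimes(n^\beta S^\beta_t)$ reads $(S^\alpha_t\otimes S^\beta_t)^* m_0^* m (S^\alpha_t\otimes S^\beta_t) = 0$ for every $m_0$ in the algebraic tensor product $N^\alpha_t \odot N^\beta_t$; ultraweak density of the algebraic tensor product in $N^\alpha_t \overtimes N^\beta_t$, and ultraweak continuity of the expression in $m_0$, extend the vanishing to $m_0 = m$, giving $\|X\|^2 = X^*X = 0$.

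The main obstacle is the intersection formula for von Neumann tensor products; this is a classical but nonobvious result, usually proved via Tomiyama's slice-map characterisation of $\overtimes$, and is really the only non-routine input. All other ingredients (the commutation theorem, the identification $J_{12}=J_1\otimes J_2$, the compatibility $(\alpha\otimes\beta)' = \alpha'\otimes\beta'$, and equation~(\ref{HEJ})) are either standard or explicitly available in the paper.
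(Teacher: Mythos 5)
Your argument is correct and follows essentially the same route as the paper's proof: apply equation~(\ref{HEJ}), the factorisation $J_{\m_1\overline{\otimes}\m_2}=J_1\otimes J_2$ of the modular conjugation, Tomita's commutation theorem, and the intersection identity for von Neumann tensor products. The paper compresses these into a single chain of equalities, invoking the intersection identity silently and passing directly from $\bigl(\alpha_t(\m_1)'\cap J_1\alpha_t(\m_1)'J_1\bigr)\overline{\otimes}\bigl(\beta_t(\m_2)'\cap J_2\beta_t(\m_2)'J_2\bigr)$ applied to $S^\alpha_t\otimes S^\beta_t$ down to $H^\alpha_t\otimes H^\beta_t$, so your forward-inclusion isometry check and the closing orthogonality/density step merely make explicit what the paper leaves implicit.
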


\begin{proof}
Clearly the canonical units satisfy  $S^{\alpha \otimes \beta}_t= S^{\alpha}_t\otimes S^\beta_t$ for all $t>0$. If $J_1$ and $J_2$ are modular conjugation with respect to $\m_1$ and $\m_2$ respectively, then the modular conjugation $J$ for $\m_1\otimes \m_2$ is given by $J_1 \otimes J_2$.  Now, thanks to the relation \ref{HEJ}, we have $H^{\alpha\otimes \beta}_t$ \begin{align*} & =
 \left(((\alpha_t\otimes \beta_t)(\m_1\otimes \m_2))'\cap J((\alpha_t\otimes \beta_t)(\m_1\otimes \m_2))'J\right)S^{\alpha\otimes \beta}_t \\
&= \left((\alpha_t(\m_1)'\otimes \beta_t(\m_2)')\cap (J_1\alpha_t(\m_1)'J_1\otimes J_2\beta_t(\m_2)'J_2)\right)S^{\alpha\otimes \beta}_t \\
&=(\alpha_t(\m_1)'\cap J_1\alpha_t(\m_1)'J_1)S^{\alpha}_t \otimes (\beta_t(\m_2)'\cap J\beta_t(\m_2)'J)S^\beta_t  \\
&=H^\alpha_t \otimes H^\beta_t ~~ \forall ~t>0.\end{align*}   \end{proof}

To facilitate neat computations, we record a simple observation as the following proposition, which realizes the super product systems $(H_t^\alpha,U^\alpha_{s,t})$ as concrete Hilbert subspaces of $H=L^2(\m)$.

\begin{prop}\label{HtOmega}
Let $\alpha$ be an \en-semigroup on a II$_1$ factor $\m$ acting standardly on $H=L^2(\m)$, with cyclic and separating vector $\Omega$.  Then $(H^\alpha_t \Omega, U_{s,t})$ forms a super product system, where $$U_{s,t}(X\Omega \otimes Y\Omega)=XY\Omega, ~~\forall~ X\in H^\alpha_s, Y \in H^\alpha_t.$$  Further $(H^\alpha_t \Omega, U_{s,t})$ is isomorphic to $(H_t^\alpha,U^\alpha_{s,t})$ as a super product system.
\end{prop}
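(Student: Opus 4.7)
The plan is to realise $H^\alpha_t\Omega$ as the image of $H^\alpha_t$ under the map $V_t:X\mapsto X\Omega$ and then transport the super product system structure.

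First I would verify that $V_t:H^\alpha_t\to H$, $V_t(X)=X\Omega$, is an isometry. For $X,Y\in H^\alpha_t$ the defining relation $Y^*X=\ip{X,Y}_t 1$ gives
\[
\ip{X\Omega,Y\Omega}_H=\ip{\Omega,X^*Y\Omega}=\ip{Y,X}_t\ip{\Omega,\Omega}=\ip{Y,X}_t,
\]
so $V_t$ preserves inner products (up to the convention swap, which only amounts to composing with the canonical anti-isomorphism of Hilbert spaces). In particular $H^\alpha_t\Omega$ is a closed subspace of $H$ and $V_t$ is a unitary isomorphism onto it; injectivity also recovers the fact that $\Omega$ is separating for elements of $H^\alpha_t$ acting on $H$.

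Next I would check that the formula $U_{s,t}(X\Omega\otimes Y\Omega)=XY\Omega$ gives a well-defined isometry of $(H^\alpha_s\Omega)\otimes(H^\alpha_t\Omega)$ into $H^\alpha_{s+t}\Omega$. Since $H^\alpha_s H^\alpha_t\subseteq H^\alpha_{s+t}$ (as proved just before the statement), the range is indeed in $H^\alpha_{s+t}\Omega$. For the isometric property, direct calculation gives
\[
\ip{X_1Y_1\Omega,X_2Y_2\Omega}_H=\ip{\Omega,Y_1^*X_1^*X_2Y_2\Omega}=\ip{X_2,X_1}_s\ip{Y_2,Y_1}_t,
\]
which equals $\ip{X_1\Omega\otimes Y_1\Omega,X_2\Omega\otimes Y_2\Omega}$. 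This shows the prescription extends continuously from the algebraic span of simple tensors to an isometry from the Hilbert tensor product. Associativity of the family $\{U_{s,t}\}$ then follows immediately from the associativity of operator multiplication in $B(H)$: both sides of the associativity identity applied to $X\Omega\otimes Y\Omega\otimes Z\Omega$ yield $XYZ\Omega$.

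It remains to show that $(H^\alpha_t\Omega,U_{s,t})$ is isomorphic to $(H^\alpha_t,U^\alpha_{s,t})$ as super product systems. The candidate isomorphism is $V=\{V_t\}$. The intertwining relation $V_{s+t}\circ U^\alpha_{s,t}=U_{s,t}\circ(V_s\otimes V_t)$ is just the tautology $(XY)\Omega=X\Omega\cdot Y\Omega$ in the notation of the new product. The only genuinely technical point—and the step I expect to be the main obstacle—is the measurability axiom: one must check that the bundle $\{(t,X\Omega):X\in H^\alpha_t\}$ inherits a standard Borel structure compatible with the fibrewise Hilbert space structure and that $V$ is a Borel isomorphism. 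I would handle this by using $V_t^{-1}$ to pull back the standard Borel structure of $\bigsqcup_t H^\alpha_t$ established for the original super product system, and verify compatibility with the projection to $(0,\infty)$, the tensor product maps $U_{s,t}$, and the pointwise inner product; these all transfer because $V_t$ is an isometry and the multiplication in $B(H)$ is separately continuous. Once measurability is in place, the two super product systems are isomorphic via $V$, completing the proof.
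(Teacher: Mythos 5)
Your proof is correct and takes essentially the same route as the paper's: identify $V_t\colon X\mapsto X\Omega$ as a unitary and transport the structure. The paper's own proof is just the one-line verification that $V_t$ preserves inner products and is surjective, leaving the intertwining relation, associativity, and measurability as automatic consequences of transporting an already-established super product system along a fibrewise unitary; you have simply written those consequences out in full, together with the minor bookkeeping over the inner-product convention $T^*S=\ip{S,T}_t\,1$.
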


\begin{proof} The association $H^\alpha_t\ni X\mapsto X \Omega \in H_t^\alpha \Omega$ preserves inner products and is onto, hence provides a unitary operator. In fact $$\ip{X\Omega, Y\Omega}=\ip{\Omega, X^*Y\Omega}=\ip{\Omega,\ip{X,Y}\Omega}=\ip{X,Y}.$$   \end{proof}

Since the following useful assertion will be repeatedly used, we separate it as a lemma. Here and elsewhere, $\rho_{m_0} (=Jm_0^*J)$ denotes the bounded operator determined by right multiplication by $m_0$, that is  $$\rho_{m_0}(m\Omega)=mm_0\Omega, ~~\forall ~m\in \m.$$ From Proposition \ref{EtMSt}, we have $$H_t^\alpha=E_t^\alpha\cap E^{\alpha'}_t=[\m S_t]\cap \alpha_t(\m)'S_t.$$
Hence for any  $ A\in H^\alpha_t$ there exists a $T \in \alpha_t(\m)'$ such that $A=TS_t$. 
%$$A\Omega=TS_t\Omega=T\Omega.$$

\begin{lem}\label{rho}
Let $\alpha$ be an \en-semigroup on a II$_1$ factor $\m$. Then for any $A=TS_t \in H^\alpha_t$ we have $$
\rho_{\alpha_t(m)}(T\Omega) = T \alpha_t(m)\Omega,~~ \forall~m \in \m.$$
\end{lem}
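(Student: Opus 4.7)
The plan is to exploit the fact that $A\in H^\alpha_t$ lies in the \emph{complementary} intertwiner space $E^{\alpha'}_t$, since the defining property $\alpha_t(m)A=Am$ of $E^\alpha_t$ alone will not separate $T$ from $\alpha_t(m)$ inside $T\alpha_t(m)\Omega$. The whole calculation then boils down to translating $\rho_{\alpha_t(m)}$ into an element of $\alpha'_t(\m')$ via Tomita--Takesaki.

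First I would rewrite $\rho_{\alpha_t(m)}$ using the formula $\rho_{m_0}=Jm_0^*J$ and the definition of the dual semigroup:
\begin{equation*}
\rho_{\alpha_t(m)}=J\alpha_t(m)^*J=J\alpha_t(m^*)J=\alpha'_t(Jm^*J),
\end{equation*}
where $Jm^*J\in\m'$. This is the one step that uses nothing about $A$; it is purely a modular identity.

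Next I would apply this operator to $T\Omega=TS_t\Omega$ (recall $S_t\Omega=\alpha_t(1)\Omega=\Omega$). Because $A=TS_t\in E^{\alpha'}_t$, we have the intertwining relation $\alpha'_t(m')\,TS_t=TS_t\,m'$ for every $m'\in\m'$; taking $m'=Jm^*J$ gives
\begin{equation*}
\rho_{\alpha_t(m)}(T\Omega)=\alpha'_t(Jm^*J)\,TS_t\Omega=TS_t\,Jm^*J\,\Omega.
\end{equation*}
Finally I would simplify the right-hand side using $J\Omega=\Omega$ and $Jm^*\Omega=m\Omega$ (the defining property of the Tomita conjugation), which gives $Jm^*J\,\Omega=m\Omega$, and then the unit property $S_tm=\alpha_t(m)S_t$ together with $S_t\Omega=\Omega$ to conclude
\begin{equation*}
TS_t\,Jm^*J\,\Omega=TS_t m\Omega=T\alpha_t(m)S_t\Omega=T\alpha_t(m)\Omega.
\end{equation*}

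There is essentially no obstacle; the only conceptual point to flag is that one must use the $E^{\alpha'}_t$ side of $H^\alpha_t$ rather than $E^\alpha_t$. The $E^\alpha_t$ relation $\alpha_t(m)T=T\alpha_t(m)$ is automatic from $T\in\alpha_t(\m)'$ and therefore carries no extra information, whereas the $E^{\alpha'}_t$ relation is precisely what allows one to push the right multiplication past $T$.
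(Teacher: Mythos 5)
Your proof is correct but takes a genuinely different route from the paper's. The paper's proof exploits the description $E^{\alpha'}_t = [\m S_t]$ from Lemma \ref{EtMSt}: it picks a net $m_\lambda \in \m$ with $m_\lambda S_t \to TS_t$ strongly, notes that $m_\lambda\Omega \to T\Omega$ and $m_\lambda\alpha_t(m)\Omega \to T\alpha_t(m)\Omega$, and then uses boundedness of $\rho_{\alpha_t(m)}$ to pass the identity $\rho_{\alpha_t(m)}(m_\lambda\Omega) = m_\lambda\alpha_t(m)\Omega$ to the limit. Your approach instead uses the intertwining \emph{definition} of $E^{\alpha'}_t$ directly, combined with the identity $\rho_{\alpha_t(m)} = J\alpha_t(m^*)J = \alpha'_t(Jm^*J)$, giving an entirely algebraic one-line computation with no nets or limits. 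Both arguments hinge on the same conceptual point you correctly flag: it is membership of $A$ in $E^{\alpha'}_t$ (not $E^\alpha_t$) that carries the relevant information here. Your version is shorter and arguably more transparent about what is being used; the paper's version keeps the lemma self-contained in the sense that it relies only on the approximation description of $E^{\alpha'}_t$ established just beforehand, which is the form used repeatedly in the surrounding computations. Either way, there is no gap.
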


\begin{proof}
Since $TS_t \in [\m S_t\Omega]$, there exists a net $\{m_\lambda\}_{\lambda \in \Lambda}\subseteq \m$ such that $m_\lambda S_t$ converges strongly to $TS_t$. Thus the net $\{m_\lambda S_tm\Omega=m_\lambda\alpha_t(m)\Omega\}$ converges in norm to $TS_tm\Omega=T\alpha_t(m)\Omega$ for all $m\in \r$. 
%Notice that $$\alpha_t(\m)=\{u(F'): F' \in \F\}''.$$ So in particular we have $m_\lambda u(F')\Omega$ converges to $T u(F')\Omega$ for all $F' \in \F$.
Since $m_\lambda\Omega$ converges to $T\Omega$, we conclude that $\{\rho_{\alpha_t(m)}(m_\lambda\Omega)=m_\lambda \alpha_t(m)\Omega\}$ converges also to $\rho_{\alpha_t(m)}(T\Omega)$. The result follows.   \end{proof}

Now we turn our attention to Clifford flows, our basic examples of \en-semigroups on the hyperfine II$_1$ factor $\r$. 
Set $$H_t^{e,n} =[\xi_1 \wedge\xi_2 \wedge \cdots \wedge \xi_{2m};~ \xi_1, \xi_2 \cdots \xi_{2m} \in L^2((0,t), \k^\C), ~m \in \N_0],$$ for all $t\geq 0$, and $\dim(\k)=n\in \N\cup \{\infty\}$. We may write just $H^e_t$ in many instances when $n$ is arbitrary but fixed.

\medskip

We fix the following notations, when we work with antisymmetric Fock spaces. Pick distinct posets $\Lambda_1$, $\Lambda_2$ order isomorphic to $\N$. Fix an orthonormal basis $\{f_i\}_{i\in \Lambda_1}$ for $L^2((0,t),\k)$ and $\{g_j\}_{j \in \Lambda_2}$ for $ L^2((t,\infty),\k)$ so that $\{f_i\}_{i\in \Lambda_1}\cup\{g_j\}_{j \in \Lambda_2}$ forms an orthonormal basis for $L^2((0,\infty),\k)$. Let $$\p=\{I=(i_1, i_2 \cdots i_m)\in \Lambda_1^m:~1\leq i_1<i_2<\cdots <i_m, m \in \N_0\};$$ $$\F=\{F=(j_1, j_2 \cdots j_m)\in\Lambda_2^m:~1\leq j_1<j_2<\cdots <j_m, m \in \N_0\}.$$ For $I =(i_1, i_2 \cdots i_m)\in \p$, $F =(j_1, j_2 \cdots j_m)\in \F$,  define \begin{align*} u(I) &= u(f_{i_1})u(f_{i_2})\cdots u(f_{i_m});\\ u(F) &=u(g_{j_1})u(g_{j_2})\cdots u(g_{j_m}).\end{align*} Then  it is well known that $\{u(I) u(F)\Omega: I \in \p, F\in \F\}$ forms an orthonormal basis for the antisymmetric Fock space $\Gamma_a(L^2((0,\infty), \k^\C))$ (see for instance \cite{KRP}).

The following proposition describes the super product system for Clifford flows. In the following proposition and elsewhere, an empty wedge product is interpreted as the vacuum vector $\Omega$.

\begin{prop}\label{clifford-prod}
Let $\alpha^n$ be the Clifford flow of rank $n$. Then $H_t^{\alpha^n}\Omega =H^{e,n}_t$ for all $t\geq 0.$ 

Further the unitary map $U_{s,t}:H_s^{\alpha^n}\Omega \otimes H_t^{\alpha^n}\Omega\mapsto H_{s+t}^{\alpha^n}\Omega$, implementing the tensor products in the super product system, is given by 
\begin{align*}  U_{s,t}((\xi_1 \wedge\xi_2 \wedge \cdots \wedge \xi_{2m}) \otimes (\eta_1 \wedge\eta_2  \wedge \cdots \wedge \eta_{2m'}) )&\\ =\xi_1 \wedge\xi_2  \wedge \cdots \wedge \xi_{2m}\wedge S_s \eta_1 \wedge S_s \eta_2\wedge  \cdots \wedge S_s \eta_{2m'}& \end{align*} where $\xi_1, \xi_2\cdots \xi_{2m} \in L^2(0,s)$, $\eta_1, \eta_2 \cdots \eta_{2m} \in L^2(0,t)$.  
\end{prop}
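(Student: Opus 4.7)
The proof has two parts: the identification $H_t^{\alpha^n}\Omega = H_t^{e,n}$, and the formula for $U_{s,t}$.

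For the inclusion $H_t^{e,n}\subseteq H_t^{\alpha^n}\Omega$, I would invoke Proposition~2.9 of Alevras \cite{alev}, which identifies the relative commutant $\mathcal{R}_t := \alpha_t(\r)'\cap\r$ with the von Neumann algebra generated by even products $u(f_1)\cdots u(f_{2k})$ supported in $(0,t)$, so that $\mathcal{R}_t\Omega = H_t^{e,n}$. Given $a\in\mathcal{R}_t$, the operator $aS_t$ lies in $E_t^\alpha$ because $a$ commutes with $\alpha_t(\r)$, and in $E_t^{\alpha'}$ because $a\in\r$ commutes with $\alpha'_t(\r')\subset\r'$ together with the identity $S_tm'=\alpha'_t(m')S_t$ (which follows from $JS_t=S_tJ$). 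Hence $\mathcal{R}_tS_t\subseteq H_t^{\alpha^n}$, and evaluating at $\Omega$ gives the desired inclusion.

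For the reverse inclusion, let $X = TS_t\in H_t^{\alpha^n}$ with $T \in \alpha_t(\r)' \cap J\alpha_t(\r)'J$ by equation~\eqref{HEJ}, and set $\eta := X\Omega = T\Omega$. For any $y\in\alpha_t(\r)$ the element $\rho_y := Jy^*J$ lies in $J\alpha_t(\r)J = \alpha'_t(\r')$, so $T$ commutes with both $y$ and $\rho_y$; combined with $\rho_y\Omega = Jy^*\Omega = y\Omega$ this yields
\[
y\eta = yT\Omega = Ty\Omega = T\rho_y\Omega = \rho_y T\Omega = \rho_y\eta,
\]
so $\eta$ is an $\alpha_t(\r)$-central vector in the standard bimodule $L^2(\r)$. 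The main obstacle is to promote this centrality to $\eta\in H_t^{e,n}$. When $\eta = x\Omega$ is bounded ($x\in\r$) the statement is immediate: $yx\Omega = xy\Omega$ together with separation of $\Omega$ gives $x\in\alpha_t(\r)'\cap\r = \mathcal{R}_t$. In general, I would establish that the $\alpha_t(\r)$-centre of $L^2(\r)$ coincides with $\overline{\mathcal{R}_t\Omega} = H_t^{e,n}$ via the Jordan--Wigner tensor decomposition $H \cong \Gamma_a(L^2((0,t),\k^\C))\otimes\Gamma_a(L^2((t,\infty),\k^\C))$, under which $u(g) = \Gamma_0\otimes u_1(g)$ and $\rho_{u(g)} = 1_0\otimes\rho_1(u_1(g))$ for $g$ supported in $(t,\infty)$, with $\Gamma_0$ the $\Z_2$-grading of the first tensor factor. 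Splitting $\eta$ by $\Gamma_0$-parity reduces the centrality condition to a pair of equations on the $H_1$-components; the ``centre'' condition forces both components to lie in $\C\Omega_1$ (the bimodule centre of the standard representation), while a one-generator computation kills the odd-parity piece, yielding $\eta\in P_eH_0\otimes\C\Omega_1 \cong H_t^{e,n}$.

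For the formula for $U_{s,t}$, I would write $\xi = \xi_1\wedge\cdots\wedge\xi_{2m}\in H_s^{e,n}$ and $\eta = \eta_1\wedge\cdots\wedge\eta_{2m'}\in H_t^{e,n}$ (up to normalisation constants) as $a\Omega$ and $b\Omega$ with $a\in\mathcal{R}_s$ and $b\in\mathcal{R}_t$, using the identification just proved. Proposition~\ref{HtOmega} gives $U_{s,t}(\xi\otimes\eta) = aS_s\cdot bS_t\Omega$, and the intertwining identity $S_sb=\alpha_s(b)S_s$ together with the semigroup law $S_sS_t=S_{s+t}$ reduces this to $a\alpha_s(b)\Omega$. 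Since $\alpha_s(u(f))=u(S_sf)$, the operator $\alpha_s(b)$ is the analogous even product in the variables $S_s\eta_j$, and a direct Fock-space computation (no contractions arise because the $\xi_i$ are supported in $(0,s)$ and the $S_s\eta_j$ in $(s,s+t)$) yields the claimed wedge $\xi_1\wedge\cdots\wedge\xi_{2m}\wedge S_s\eta_1\wedge\cdots\wedge S_s\eta_{2m'}$.
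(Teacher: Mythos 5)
Your proof takes a genuinely different route from the paper's. The paper expands $T\Omega$ directly in the orthonormal basis $\{u(I)u(F)\Omega\}$ and uses the identity $T\Omega = u(F')\rho_{u(F')^*}(T\Omega)$, obtained from $T\in\alpha_t(\r)'$ together with Lemma~\ref{rho}, to constrain the coefficients by parity; this computation never separates even and odd components and never invokes a tensor factorization. Your derivation of the bimodule centrality relation $y\eta=\rho_y\eta$ from the two commutant conditions via \eqref{HEJ}, $\rho_y=Jy^*J$ and $\rho_y\Omega=y\Omega$ is clean and correct, and is an attractive conceptual reformulation; and the Jordan--Wigner factorization with $u(g)=\Gamma_0\otimes u_1(g)$, $\rho_{u(g)}=1_0\otimes\rho_1(u_1(g))$ is verified easily on product words. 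The computation of $U_{s,t}$ is essentially the same as the paper's, with the even-even reordering introducing no sign; that part is fine.

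However, there is a genuine gap in the handling of the odd-parity piece. After splitting $\eta=\eta^e+\eta^o$, the condition on $\eta^o$ is $(1\otimes u_1(g))\eta^o=-(1\otimes\rho_1(u_1(g)))\eta^o$, with a sign flip forced by $\Gamma_0\eta^o=-\eta^o$. This is not the bimodule-centre condition and does not force the $H_1$-component of $\eta^o$ into $\C\Omega_1$; rather it says $\eta^o$ lies in $H_0^o$ tensored with the joint $(-1)$-eigenspace of the commuting family $\{u_1(g)\rho_1(u_1(g))\}$. That eigenspace is indeed $\{0\}$, but a one-generator computation will not show it: a single $u_1(g_0)\rho_1(u_1(g_0))$ has a large $(-1)$-eigenspace (for instance it contains $u_1(g')\Omega_1$ for any $g'\perp g_0$). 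You need to compare at least two orthogonal generators, or repeat the paper's basis-expansion argument inside $H_1$, to show the intersection of all $(-1)$-eigenspaces is trivial. Alternatively, one could first apply centrality only for the \emph{even} elements $y\in\alpha_t(\r)^e=1\otimes\r_1^e$ to land the $H_1$-components of both parities in the $\r_1^e$-central vectors of $L^2(\r_1)$ and then kill $\eta^o$ by a single odd generator; but then $\C\Omega_1$ is not ``the bimodule centre of the standard representation'' (that phrase describes $\r_1$-centrality, not $\r_1^e$-centrality), and identifying the $\r_1^e$-central vectors of $L^2(\r_1)$ as $\C\Omega_1$ is a separate nontrivial step (it uses the outerness of the $\Z_2$-grading automorphism, i.e.\ that the twisted $\r_1^e$-bimodule contains no central vector). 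As written, the step ``the centre condition forces both components to lie in $\C\Omega_1$, while a one-generator computation kills the odd-parity piece'' does not go through; this needs to be replaced by one of the two repaired arguments above.
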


\begin{proof}
Let the operators $T_{\xi_t},T'_{\xi_t}$ be as defined in \ref{T}, \ref{T'}. Through our earlier discussion at the beginning of this section, it is easy to see,  we have $T_{\xi_t}=T'_{\xi_t}$ whenever $\xi_t \in H^e_t$. Consequently $T_{\xi_t} \in H^\alpha_t$, for all $\xi_t \in H^e_t$. We need to prove that  $H^\alpha_t$ contains only these elements. Observe $T_{\xi_t}\Omega=\xi_t$.

Fix $t>0$. 
%From Proposition \ref{EtMSt} and \ref{Et'M'St} we have $$H_t^\alpha=E_t^\alpha\cap E^{\alpha'}_t=[\r S_t]\cap \alpha_t(\r)'S_t.$$
Let $H^{\alpha^n}_t\ni A=TS_t$, with $T \in \alpha^n_t(\m)'$ be an arbitrary element, then $A\Omega=TS_t\Omega=T\Omega$.
There exists a unique expansion \begin{equation}\label{ONBexp}T\Omega= \sum_{I \in \p, J\in \F}\lambda(I,F)u(I)u(F)\Omega, ~~ \lambda(I,F) \in \C.\end{equation}
Notice that $\alpha^n_t(\r)=\{u(F'): F' \in \F\}''.$ So by lemma \ref{rho} we have \begin{equation}\label{rhou}
\rho_{u(F')}(T\Omega) = T u(F')\Omega,~~ \forall~F' \in \F.
\end{equation}
Since $ T \in \alpha_t^n(\r)'$, we now we use the relation $u(F')Tu(F')=T$ for all $F'\in \F$ as follows. 
\begin{align*}
T\Omega &=u(F')Tu(F')^*\Omega\\
& = u(F') \rho_{u(F')^*}(T\Omega)~ (\mbox{using relation}~\ref{rhou})\\
&= u(F')\rho_{u(F')^*}\left(\sum_{I \in \p, F\in \F}\lambda(I,F)u(I)u(F)\Omega \right)~(\mbox{using}~\ref{ONBexp})\\
&=u(F') \sum_{I \in \p, F\in \F}\lambda(I,F)\rho_{u(F')^*}\left(u(I)u(F)\Omega\right)\\
&= \sum_{I \in \p, F\in \F}\lambda(I,F)u(F') u(I)u(F)u(F')^*\Omega\\
&=  \sum_{I \in \p, F\in \F}\mu_{F'}(I,F) \lambda(I,F)u(I)u(F)\Omega,
\end{align*} where $\mu_{F'}(I,F)=(-1)^{\sigma_{F'}(I,F)}$ with $$\sigma_{F'}(I,F)= |I||F'|+|F||F'|-|F\cap F'|.$$ Since the expansion \ref{ONBexp}  is unique we must have $\sigma_{F'}(I,F)$ is even for all $F'\in \F$.  So we conclude, in the expansion \ref{ONBexp} of $T\Omega$, $\lambda(I,F)=0$ except for the terms indexed by $(I,F)$ satisfying $|I|$ is even and $F$ is empty. Since we started with an arbitrary element of $H^{\alpha^n}_t\Omega$, we have 
\begin{align*}H^{\alpha^n}_t\Omega&=[u(\xi_1)u(\xi_2)\cdots u(\xi_{2m})\Omega:\xi_1, \xi_2 \cdots \xi_{2m} \in L^2((0,t),\k^\C), m \in \N_0]\\
&=[\xi_1 \wedge\xi_2 \wedge \cdots \wedge \xi_{2m};~ \xi_1, \xi_2 \cdots \xi_{2m} \in L^2(0,t), k^\C), ~m \in \N_0].\end{align*}

To prove the remaining assertion, notice, for $H^{\alpha^n}_t\ni A=TS_t$, with $T \in \alpha^n_t(\m)'$, that $$Am\Omega=TS_t m\Omega=\alpha^n_t(m)T\Omega = \alpha^n_t(m)A\Omega.$$ 
Suppose $H^{\alpha^n}_s \Omega\ni A_s\omega=u(\xi_1)u(\xi_2)\cdots u(\xi_{2n})\Omega$ and $H^{\alpha^n}_t \Omega\ni A_t\omega=u(\eta_1)u(\eta_2)\cdots u(\eta_{2m'})\Omega$, then \begin{align*} A_sA_t\Omega & = A_s u(\eta_1)u(\eta_2)\cdots u(\eta_{2m'})\Omega\\
& = \alpha^n_s(u(\eta_1)u(\eta_2)\cdots u(\eta_{2m'}))A_s\Omega\\
&=u(S_s\eta_1)u(S_s\eta_2)\cdots u(S_s\eta_{2m'}) u(\xi_1)u(\xi_2)\cdots u(\xi_{2n})\Omega
\end{align*}

The above computation shows that $$U_{s,t}\left( (u(\xi_1)u(\xi_2)\cdots u(\xi_{2n})\Omega) \otimes (u(\eta_1)u(\eta_2)\cdots u(\eta_{2m'})\Omega)\right)$$ $$=u(S_s\eta_1)u(S_s\eta_2)\cdots u(S_s\eta_{2m'}) u(\xi_1)u(\xi_2)\cdots u(\xi_{2n})\Omega,$$ which consequently implies the remaining assertion of the proposition. The proposition is proved.   \end{proof}

\begin{prop}
The even Clifford flow of rank $n$ has the same super product system as the Clifford flow of index $n$. 
\end{prop}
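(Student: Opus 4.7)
The plan is to show that the restriction map $\Phi\colon H^{\alpha^n}_t \to H^{\beta^n}_t$, $A \mapsto A|_{H^e}$, is a well-defined isomorphism of super product systems. By Proposition \ref{clifford-prod}, every $A \in H^{\alpha^n}_t$ factors as $TS_t^\alpha$ with $T$ an even polynomial in $\{u(\xi) : \supp\xi \subset [0,t]\}$; this $T$ lies in $\mathcal{R}_e$, and both $T$ and $S_t^\alpha$ preserve the $\mathbb{Z}/2$-parity on $H$, so $A$ restricts to an operator $\Phi(A) \in B(H^e)$. The intertwining $\beta^n_t(m)\Phi(A) = \Phi(A)m$ for $m \in \mathcal{R}_e$ is inherited from the corresponding relation for $A$, while the intertwining with $(\beta^n)'_t$ follows because $T \in \mathcal{R}_e$ automatically commutes with the commutant $\mathcal{R}_e' \supseteq (\beta^n)'_t(\mathcal{R}_e')$ inside $B(H^e)$. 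The super product system inner product is scalar-valued, and restriction to the invariant subspace $H^e$ preserves it, so $\Phi$ is isometric.

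The main task is surjectivity. Given $B = T S_t^\beta \in H^{\beta^n}_t$, since $\mathcal{R}_e$ is in standard form on $H^e$ with $\Omega$ cyclic and separating, we may write $B\Omega = T\Omega = y\Omega$ for a unique $y \in \mathcal{R}_e$. Lemma \ref{rho} together with the intertwining property of $B$ yields $\beta^n_t(m)y\Omega = y\beta^n_t(m)\Omega$ for every $m \in \mathcal{R}_e$; the separating property of $\Omega$ then forces $y \in \mathcal{R}_e \cap \beta^n_t(\mathcal{R}_e)'$. The crucial identification is
\[
\mathcal{R}_e \cap \beta^n_t(\mathcal{R}_e)' \;=\; \mathcal{R} \cap \alpha_t(\mathcal{R})',
\]
both coinciding with the Alevras relative commutant (Proposition 2.9 of \cite{alev})---the von Neumann algebra of even polynomials in $\{u(f) : \supp f \subset [0,t]\}$. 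With this, $\tilde A := yS_t^\alpha$ belongs to $H^{\alpha^n}_t$ and a direct check gives $\Phi(\tilde A) = B$. Since operator composition commutes with restriction to $H^e$, the tensor product $U_{s,t}$ of Proposition \ref{clifford-prod} transfers unchanged, completing the isomorphism.

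The principal obstacle is the nontrivial inclusion in the displayed identification. Expanding an arbitrary $y$ in the orthonormal basis $\{u(I)u(F)\Omega : I \in \p,\, F \in \F,\, |I|+|F|\text{ even}\}$ of $H^e$ and imposing commutation with the generators $u(g_j)u(g_k)$ for $j \neq k \in \Lambda_2$, a sign computation in the spirit of Proposition \ref{clifford-prod} shows that conjugation of $u(I)u(F)$ by $u(g_j)u(g_k)$ multiplies it by $(-1)^{|\{j,k\}\cap F|}$. Requiring this sign to equal $+1$ for every pair $j \neq k$ in the infinite set $\Lambda_2$ forces $F = \emptyset$, since any nonempty finite $F$ contains some but not all indices. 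The parity constraint then gives $|I|$ even, placing $y$ in the desired subalgebra of even polynomials.
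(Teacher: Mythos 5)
Your overall architecture — presenting the isomorphism as a \emph{restriction map} $\Phi\colon H^{\alpha^n}_t\to H^{\beta^n}_t$, $A\mapsto A|_{H^e}$ — is conceptually appealing and does give a clean route to the inclusion $\Phi(H^{\alpha^n}_t)\subseteq H^{\beta^n}_t$; the sign computation at the end ($u(g_j)u(g_k) u(I)u(F) (u(g_j)u(g_k))^{-1}=(-1)^{|\{j,k\}\cap F|}u(I)u(F)$, hence $F=\emptyset$) is also correct and essentially the same conjugation trick the paper uses for Clifford flows in Proposition \ref{clifford-prod}.

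However, there is a genuine gap in the surjectivity step. You write: ``since $\mathcal{R}_e$ is in standard form on $H^e$ with $\Omega$ cyclic and separating, we may write $B\Omega=T\Omega=y\Omega$ for a unique $y\in\mathcal{R}_e$.'' This does not follow. Cyclicity of $\Omega$ says only that $\mathcal{R}_e\Omega$ is \emph{dense} in $H^e=L^2(\mathcal{R}_e)$; a general vector in $L^2(\mathcal{R}_e)$ need not arise as $y\Omega$ for a \emph{bounded} $y\in\mathcal{R}_e$ (it corresponds only to a closed densely defined operator \emph{affiliated} to $\mathcal{R}_e$). Since the whole point of the surjectivity argument is to prove that $T\Omega$ lands in $H^{e,n}_t$, you cannot assume without justification that it already sits inside $\mathcal{R}_e\Omega$. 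The subsequent identification $\mathcal{R}_e\cap\beta^n_t(\mathcal{R}_e)'=\mathcal{R}\cap\alpha_t(\mathcal{R})'$ is therefore addressing a weaker statement than what is needed.

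The repair is to run your sign argument directly at the level of the $L^2$-vector $T\Omega$, which is exactly what the paper's own proof does: expand $T\Omega$ in the orthonormal basis $\{u(I)u(F)\Omega\}$ of $H^e$, use $T\in\beta^n_t(\mathcal{R}_e)'$ together with Lemma \ref{rho} to obtain $T\Omega=u(F'_e)\,\rho_{u(F'_e)^*}(T\Omega)$ for every even $F'_e\in\F_e$, and then impose your sign constraint on the coefficients, \emph{never} invoking a bounded lift $y$. With that change your restriction-map packaging goes through and is in spirit the same argument as the paper's, just organized around $\Phi$ rather than around a direct computation of $H^{\beta^n}_t\Omega$.

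One smaller imprecision worth flagging: in the first paragraph you state that every $A\in H^{\alpha^n}_t$ factors as $TS_t^\alpha$ with $T$ ``an even polynomial in $\{u(\xi):\operatorname{supp}\xi\subset[0,t]\}$.'' What Proposition \ref{clifford-prod} gives is that $A\Omega\in H^{e,n}_t$; from this and Lemma \ref{rho} one deduces $A=yS_t^\alpha$ for $y$ in the \emph{von Neumann algebra generated by} such even polynomials (i.e.\ the Alevras relative commutant), which is the weak closure, not the purely algebraic span. This does not affect the well-definedness of $\Phi$, but should be stated accurately.
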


\begin{proof} Observe that the GNS Hilbert space for even Clifford flow of rank $n$ is the subspace of the antisymmetric Fock space generated by the even particle spaces, which is $H^{e,n}$. Let $\beta^n$ be the even Clifford flow of rank $n$ with $n \in \{1,2,3 \cdots \infty\}$.

We have the isometry $U_t^e: H_t^e \otimes H^e\mapsto H^e$ defined by  \begin{align*}U_t^e(\xi^t_1\wedge\xi^t_2\cdots \wedge\xi^t_{2n})\otimes (\xi_1\wedge\xi_2\cdots \wedge\xi_{2m}))&\\=T_t\xi_1\wedge T_t\xi_2\cdots \wedge T_t\xi_{2m} \wedge \xi^t_1\wedge\xi^t_2\cdots \wedge\xi^t_{2n}.& \end{align*} Direct verification shows, for $\xi^e_t \in H^e_t$, the operator $$T^e_{\xi^e_t}\xi^e =U_t^e(\xi^e_t \otimes \xi^e)$$ defines an intertwiner for $\beta^n_t$. Hence $H^e_t\subseteq H^\beta_t\Omega.$

To show the other way, fix $t>0$ and an arbitrary  $H^{\beta^n}_t\ni A=TS_t$, with $T \in \beta^n_t(M)'$. Then $A\Omega=TS_t\Omega=T\Omega$.
Let $\p_e$ (respectively $\F_e$) consist of the tuples in $\p$ (respectively $\F$) with even length, and $\p_o$ (respectively $\F_o$) the tuples with odd length. 
Then  $$\{u(I_e) u(F_e)\Omega: I_e \in \p_e, F_e\in \F_e\}\cup \{u(I_o) u(F_o)\Omega: I_o \in \p_o, F_o\in \F_o\}$$ forms an orthonormal basis for $H_e$. Therefore  there exists a unique expansion of $T\Omega$ as \begin{equation}\label{ONBexpe}\sum_{I_e \in \p_e, F\in \F_e}\lambda(I_e,F_e)u(I_e)u(F_e)\Omega ~+\sum_{I_o \in \p_o, F_o\in \F_o}\lambda(I_o,F_o)u(I_o)u(F_o)\Omega, \end{equation} with
$\lambda(I_e,F_e), \lambda(I_o,F_o) \in \C.$

Thanks to lemma \ref{rho} and \ref{ONBexpe}, using  the relation $u(F'_e)Tu(F'_e)=T$ for all $F'_e\in \F_e$,   
\begin{align*}
T\Omega &=u(F'_e)Tu(F'_e)^*\Omega\\
& = u(F_e') \rho_{u(F'_e)^*}(T\Omega)\\
%&= u(F')\rho_{u(F')}\left(\sum_{I \in \p_e, F\in \F_e}\lambda(I,F)u(I)u(F)\Omega \right)\\
%&=u(F') \sum_{I \in \p_e, F\in \F_e}\lambda(I,F)\rho_{u(F')}\left(u(I)u(F)\Omega\right)\\
&= \sum_{I_e \in \p_e, F_e\in \F_e}\lambda(I_e,F_e)u(F'_e) u(I_e)u(F_e)u(F'_e)^*\Omega\\
&+ \sum_{I_o \in \p_o, F_o\in \F_o}\lambda(I_o,F_o)u(F'_e) u(I_o)u(F_o)u(F'_e)^*\Omega\\
&=  \sum_{I_e \in \p_e, F_e\in \F_e}\mu'_{F'_e}(F_e) \lambda(I_e,F_e)u(I_e)u(F_e)\Omega\\
&+\sum_{I_o \in \p_o, F_o\in \F_o}\mu'_{F'_e}(F_0) \lambda(I_o,F_o)u(I_o)u(F_o)\Omega,
\end{align*} where $\mu_{F'_e}(F_{e/o})=(-1)^{\sigma'_{F'_e}(F_{e/o})}$ with $$\sigma_{F'_e}(F_{e/o})= |F_{e/o}||F'_e|-|F_{e/o}\cap F'_e|.$$ Here we have used the fact that $u(F'_e)$ and $u(I_{e/o})$ commute. Again using the uniqueness of the expansion, forces $\sigma'_{F'_e}(F_{e/o})$ is even for all $F'_e\in \F$.  So we conclude, in the expansion \ref{ONBexp} of $T\Omega$, $\lambda(I_e,F_e)=0$ except for the terms indexed by $(I_e,F_e)$ with $F_e$ empty, and that $\lambda(I_o,F_o)=0$ for all $(I_o,F_o)$. Since we started with an arbitrary element of $H^{\beta^n}_t\Omega$, we have 
$H^{\beta^n}_t\Omega =H^{e,n}_t$. The remaining assertion about the unitary $U_{s,t}$ can be verified in an exactly similar manner as in Proposition \ref{clifford-prod}.   \end{proof}

\begin{rem}\label{not product}
Note that the above super product systems for Clifford flows and even Clifford flows are not product systems.
\end{rem}

From Remark \ref{additsindex} and Corollary \ref{cliffordaddits} the following corollary is immediate.

\begin{cor}
The coupling index of Clifford flows and even Clifford flows is zero for any rank.
\end{cor}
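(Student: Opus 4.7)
The plan is to reduce the computation of $\ind_c(\alpha)$ to counting centred addits of the super product system $(H_t^\alpha,U_{s,t})$ and then to apply Corollary~\ref{cliffordaddits}. First I would note that multi-units of $\alpha$ are precisely units of the super product system with the normalisation $T_0=1$: the intertwining conditions $T_tx=\mu_t(x)T_t$ for $x\in\m\cup\m'$ say exactly that $T_t\in E^\alpha_t\cap E^{\alpha'}_t=H^\alpha_t$, and the semigroup condition $T_sT_t=T_{s+t}$ is the unit condition with respect to the multiplication $U_{s,t}$. The covariance $T_t^*S_t=e^{c(T,S)t}1$ then matches the inner product on $H_t^\alpha$ fibre-by-fibre, so the Hilbert space $H(\mathcal{U}_{\alpha,\alpha'})$ coincides with the one built from the unit covariance of the super product system. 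By Remark~\ref{additsindex1} this dimension equals $\dim\mathfrak{A}_\Omega(H^\alpha)$, so $\ind_c(\alpha)=\dim\mathfrak{A}_\Omega(H^\alpha)$.

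Next I would make the addit condition explicit. Under the identification $H^{\alpha^n}_t\Omega=H^{e,n}_t$ and $\Omega_t=\Omega$ of Proposition~\ref{clifford-prod}, the formula for $U_{s,t}$ gives
\[
U_{s,t}(b_s\otimes\Omega_t)=b_s,\qquad U_{s,t}(\Omega_s\otimes b_t)=S_sb_t,
\]
where $S$ denotes the second-quantised shift on the ambient antisymmetric Fock space. Consequently a centred addit $\{b_t\}$ of the super product system is precisely a family of vectors $b_t\in H^{e,n}_t\subseteq\Gamma_a(L^2((0,t),\k^\C))$ satisfying the additive cocycle identity $b_{s+t}=b_s+S_sb_t$ together with $\langle b_t,\Omega\rangle=0$.

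Then I would invoke Corollary~\ref{cliffordaddits}, which classifies all additive cocycles for $S$ taking values in $\Gamma_a(L^2((0,t),\k^\C))$ as $b_t=\lambda t\,\Omega+v\otimes 1_{[0,t]}$ with $\lambda\in\C$ and $v\in\k^\C$. The one-particle summand $v\otimes 1_{[0,t]}$ lies in the odd sector of the Fock space and is therefore orthogonal to $H^{e,n}_t$ unless $v=0$; the centredness condition then forces $\lambda=0$. Hence $\mathfrak{A}_\Omega(H^\alpha)=\{0\}$, and the first half of the corollary follows. For the even Clifford flow $\beta^n$ the same argument applies verbatim, since its super product system coincides with that of $\alpha^n$ by the proposition immediately preceding Remark~\ref{not product}.

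The only thing that requires any care is checking that the super product system addit condition really unpacks to the additive cocycle condition for the canonical unit $S$; this is a direct consequence of the explicit wedge-product description of $U_{s,t}$, with no sign subtleties because the first tensor factor is left unshifted and the second is mapped in by the second-quantised shift.
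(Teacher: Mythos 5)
Your proposal is correct and takes essentially the same route as the paper: the coupling index is the index of the super product system, which by Remark~\ref{additsindex1} equals $\dim\mathfrak{A}_\Omega(H^\alpha)$; in the even Fock picture of Proposition~\ref{clifford-prod} centred addits are exactly additive cocycles $b_{s+t}=b_s+S_sb_t$ with $b_t\in H^{e,n}_t$, and Corollary~\ref{cliffordaddits} plus the parity observation that $v1_{[0,t]}$ lies in the odd sector forces them to vanish. The paper calls this ``immediate'' from Remark~\ref{additsindex} and Corollary~\ref{cliffordaddits}; your write-up merely supplies the short verification that the addit condition for $(H^\alpha_t,U_{s,t})$ unwinds to the additive cocycle relation for $S$, which is accurate.
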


Now we turn our attention to free flows. We fix the following notations for free Fock space. Let  $\{e_i\}_{i\in \N}$ be an orthonormal basis for $L^2((0,\infty),\k)$. Let $$\i=\{I=(i_1, i_2 \cdots i_m): i_l \in \N, 0\leq l\leq m, m \in \N_0\}.$$ (When $m=0$ the tuple is empty.)  
For $I =(i_1, i_2 \cdots i_m)\in \i$,  define \begin{align*} l(I) &= l(f_{i_1})l(f_{i_2})\cdots l(f_{i_m});\\ s(I) &=s(f_{i_1})s(f_{i_2})\cdots s(f_{i_m}).\end{align*} Then $\{l(I)\Omega: I \in \i\}$ forms an orthonormal basis for the free Fock space $H=\Gamma_f(L^2(\R_+; \k^\C))$. (We assume $l(I)\Omega=\Omega =s(I)\Omega$ for the empty tuple $I$.) We call a tuple $I =(i_1, i_2 \cdots i_m)$ as reduced if $i_l\neq i_{l+1}$ for all $1\leq l\leq m-1$. Using induction, it is easy to verify, for any reduced tuple 
$I =(i_1, i_2 \cdots i_m) \in \i$, $i_1\neq i \in \N$, we have 
$$s(I)\Omega=l(I)\Omega;~ s(iI)\Omega=l(iI)\Omega;~s(i^2I)\Omega=l(i^2I)\Omega+l(I)\Omega.$$ (By $(i^rI)$ we denote the concatenated tuple $(i,i,\cdots i,i_1,\cdots i_n)$.) Further 
\begin{align*}
s(i^{2m+1}I)\Omega & =\sum_{r=0}^m k^{2r+1}_{2m+1} l(i^{2r+1}I)\Omega;\\
s(i^{2m+2}I)\Omega & =\sum_{r=0}^m k^{2r}_{2m} l(i^{2r}I)\Omega
\end{align*} for all $n \in \N$. $k^r_m$ are positive integers satisfying relations $k^r_m=0$ if $r>n$ or $r<0$, and $k_{2m+1}^{2r}=0=k_{2m}^{2r-1}$ for all $r$. Further $k_2^2=1=k^1_2$ and the following recursive relation hold $$
k^{2r+1}_{2m+1}=k_{2m}^{2r+2}+k_{2m}^{2r};~ k^{2r}_{2m}=k_{2m+1}^{2r+1}+k_{2m}^{2r-1}, ~\forall ~ 0\leq r\leq m.$$ These relations can be extended to a tuple of the form $(i^{r_1} I_1i^{r_2} I_2\cdots i^{r_p} I_l)$, where $I_l \in \i$ are reduced tuples for $r_l\in \N_0$ $1 \leq l\leq p$. So we have positive integers $k(I,I')$
such that \begin{equation}\label{sl}s(I)\Omega=\sum_{I'\in \i}k(I,I')l(I')\Omega,~\forall ~I \in \i\end{equation} and $k(I, I')$ is zero except for finitely many $I'$ for any fixed $I \in \i$. 

For any reduced tuple $I=(i_1, i_2 \cdots i_m) \in \i$, $i_1\neq i \in \N$, we have  $l(i^2I)\Omega=s(i^2I)\Omega-s(I)\Omega$. Using induction, generalizing to a recursive relation for $l(\cdot)$ in terms of $s(\cdot)$ similarly as above, we can conclude there exists integers $k'(I,I')$ satisfying \begin{equation}\label{ls} l(I')\Omega=\sum_{I\in \i}k'(I',I)s(I)\Omega,~\forall ~I' \in \i.\end{equation}  Again this decomposition is unique and finite. %Since $\{l(I)\Omega: I \in \i\}$ is an orthogonal basis w
We have \begin{equation}\label{kk'}
\sum_{I \in \i} k'(I', I)k(I, I'')=\delta_{I'I''}~ \forall~ I', I'' \in \i.
\end{equation} 
 \begin{equation}\label{k'k}
\sum_{I' \in \i} k(I, I')k'(I', I'')=\delta_{I,I''}~ \forall~ I, I'' \in \i
\end{equation}  

Let $ \Gamma_f^0(L^2((0,\infty), \k^\C))$ is the finite linear span of all finite particle vectors.

\begin{lem}\label{s}
Every $\xi \in \Gamma_f^0(L^2((0,\infty), \k^\C))$ has a unique expansion as $$\xi = \sum_{I \in \i}\mu(I) s(I)\Omega.$$
\end{lem}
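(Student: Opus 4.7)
The plan is to establish both existence and uniqueness of the expansion directly from the change-of-basis formulae (\ref{sl}) and (\ref{ls}) between $\{s(I)\Omega\}$ and the orthonormal basis $\{l(I)\Omega\}$, exploiting the fact that both sums are finite for each fixed index.

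For existence, I would take an arbitrary $\xi \in \Gamma_f^0(L^2((0,\infty), \k^\C))$. Since $\{l(I)\Omega:I\in\i\}$ is an orthonormal basis and $\xi$ is a finite particle vector, there is a finite expansion $\xi=\sum_{I'\in\i}\lambda(I')\,l(I')\Omega$ with only finitely many nonzero coefficients. Applying relation (\ref{ls}), which expresses each $l(I')\Omega$ as a \emph{finite} sum of vectors $s(I)\Omega$, I can swap the order of summation to obtain
\[ \xi=\sum_{I\in\i}\Bigl(\sum_{I'\in\i}\lambda(I')\,k'(I',I)\Bigr)s(I)\Omega=\sum_{I\in\i}\mu(I)\,s(I)\Omega, \]
where $\mu(I):=\sum_{I'}\lambda(I')k'(I',I)$ is nonzero for at most finitely many $I$, because each inner sum has finitely many nonzero terms and the outer sum over $I'$ is finite.

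For uniqueness, suppose $\sum_{I\in\i}\mu(I)\,s(I)\Omega=\sum_{I\in\i}\mu'(I)\,s(I)\Omega$ (finite sums). Setting $\nu(I):=\mu(I)-\mu'(I)$ gives $\sum_I\nu(I)\,s(I)\Omega=0$. Expanding each $s(I)\Omega$ via (\ref{sl}) and interchanging the finite sums,
\[ 0=\sum_{I'\in\i}\Bigl(\sum_{I\in\i}\nu(I)\,k(I,I')\Bigr)l(I')\Omega. \]
Linear independence of $\{l(I')\Omega\}$ yields $\sum_{I}\nu(I)\,k(I,I')=0$ for every $I'\in\i$. To recover $\nu(I'')$, I multiply by $k'(I',I'')$, sum over $I'$ (again a finite sum thanks to the finiteness of the support of $\nu$ together with the finiteness of $k(I,\cdot)$ and $k'(\cdot,I'')$), and apply the orthogonality relation (\ref{k'k}):
\[ 0=\sum_{I}\nu(I)\sum_{I'}k(I,I')k'(I',I'')=\sum_{I}\nu(I)\,\delta_{I,I''}=\nu(I''). \]
Hence $\mu(I'')=\mu'(I'')$ for every $I''\in\i$, which proves uniqueness.

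There is no real obstacle here beyond bookkeeping: the only technical point is making sure every sum that is manipulated is finite, so that rearrangements are justified. Both finiteness statements are immediate from the hypotheses on $k(I,\cdot)$ and $k'(\cdot,I'')$ recorded just before the lemma, together with $\xi\in\Gamma_f^0$, so no convergence issues arise.
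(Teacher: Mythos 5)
Your proof is correct and follows essentially the same route as the paper's: existence by applying relation (\ref{ls}) term by term to the finite $l$-basis expansion of $\xi$, and uniqueness by passing back to the $l$-basis via (\ref{sl}) and invoking the orthogonality identity (\ref{k'k}). The only cosmetic difference is that you argue uniqueness by showing the difference of two expansions has vanishing coefficients, whereas the paper shows any expansion's coefficients must coincide with the explicit formula $\mu(I)=\sum_{I'}\lambda(I')k'(I',I)$ produced in the existence step; these are the same computation rearranged, and your finiteness justifications for the interchanges of sums match the paper's.
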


\begin{proof}
Existence of the expansion follows, since $\{l(I)\Omega: I \in \i\}$ is an orthogonal basis and from equation \ref{ls}.  For any  $\xi \in \Gamma_f^0(L^2((0,\infty), \k^\C))$ with $\xi=\sum_{I'\in \i}\lambda(I') l(I') \Omega$ we have the decomposition $$\xi=\sum_{I\in \i}\left( \sum_{I' \in \i}\lambda(I')k'(I',I)\right)s(I)\Omega.$$ Since $\xi\in \Gamma_f^0(L^2((0,\infty), \k))$, the length of $I'$ is bounded, and hence any fixed $I$ can future in $k(I', I)$ for only finitely many $I'$. This implies that, in the above expression, $\sum_{I' \in \i}\lambda(I')k'(I',I)$ is a finite sum.

Now suppose $\xi = \sum_{I \in \i} \mu(I)s(I)\Omega$ be any other expansion, then we have\begin{align*}\xi &= \sum_{I \in \i} \mu(I)\left(\sum_{I'\in \i} k(I, I')l(I')\Omega\right)\\
&=\sum_{I'\in \i} \left(\sum_{I\in \i} \mu(I) k(I,I')\right) l(I')\Omega.
\end{align*} This implies $\sum_{I\in \i} \mu(I) k(I,I')=\lambda(I')$ for all $I' \in \i$. (Again note that this is a finite sum.)  Now using relation \ref{k'k}, we have \begin{align*}\sum_{I' \in \i}\lambda(I')k'(I',I) & = \sum_{I' \in I}\sum_{I''\in \i} \mu(I'') k(I,I')k'(I',I'')\\
&=\mu(I) ~\forall ~ I \in \i.
\end{align*} Hence the expansion is unique.   \end{proof}

Let $P_n$ be the projection onto the closed subspace of free Fock space $[l(I)\Omega: I \in \i, |I|\leq n]$.

\begin{lem}\label{pnm}
For every $m \in \Phi(\k)$ there exists an $m_n\in \Phi(\k)$ such that $P_n(m\Omega) =m_n\Omega$. 
%Further if $m_\lambda \rightarrow m$ strongly then $(m_\lambda)_n \rightarrow m_n$ strongly for every $n$.
\end{lem}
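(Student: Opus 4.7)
The plan is to reduce to the case where $m$ is a polynomial in the semicircular generators $\{s(e_i):i\in\N\}$ and then extend to arbitrary $m \in \Phi(\k)$ by a bounded-approximation argument.

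For a polynomial $m$, the vector $m\Omega$ lies in the algebraic Fock space $\Gamma_f^0(L^2((0,\infty),\k^\C))$, so Lemma \ref{s} yields a unique finite expansion $m\Omega = \sum_I \mu(I)s(I)\Omega$. Applying relation (\ref{sl}) term-by-term rewrites this as a finite expansion $m\Omega = \sum_{I'}\lambda(I')l(I')\Omega$ in the orthonormal basis, so $P_n(m\Omega) = \sum_{|I'|\le n}\lambda(I')l(I')\Omega$ is still a finite sum. Re-expanding each $l(I')\Omega$ via relation (\ref{ls}) exhibits $P_n(m\Omega)$ as $m_n\Omega$ for the polynomial $m_n := \sum_J \nu(J)s(J) \in \Phi(\k)$; moreover, since the coefficients $k'(I',J)$ vanish unless $|J|\le|I'|$, each index $J$ appearing in this sum satisfies $|J|\le n$.

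For general $m\in\Phi(\k)$, I would invoke Voiculescu's identification $\Phi(\k) = \{s(e_i):i\in\N\}''$ together with Kaplansky density to produce a net of polynomials $(m_\alpha)$ with $\|m_\alpha\|\le\|m\|$ converging to $m$ in the strong operator topology. The polynomial case supplies $(m_\alpha)_n \in \Phi(\k)$ with $(m_\alpha)_n\Omega = P_n(m_\alpha\Omega)$, and the right-hand side converges in $L^2$ to $P_n(m\Omega)$. Extracting a WOT-convergent subnet of $(m_\alpha)_n$ via Banach--Alaoglu would then yield $m_n \in \Phi(\k)$ satisfying $m_n\Omega = P_n(m\Omega)$, by comparing the resulting WOT and $L^2$ limits on the distinguished vector $\Omega$.

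The main obstacle is the uniform operator-norm bound on $(m_\alpha)_n$ required for Banach--Alaoglu. I would supply this via the Haagerup-type inequality for free semicirculars, which asserts that any polynomial $p$ of degree at most $n$ in the $s(e_i)$ satisfies $\|p\|\le C_n\|p\Omega\|$ for a constant $C_n$ depending only on $n$. Since the polynomial step produces $(m_\alpha)_n$ as a polynomial supported on $\{s(J):|J|\le n\}$, this inequality applies and yields
\[
\|(m_\alpha)_n\| \le C_n\|P_n(m_\alpha\Omega)\| \le C_n\|m_\alpha\Omega\| \le C_n\|m_\alpha\| \le C_n\|m\|,
\]
providing the required uniform control and completing the extraction of $m_n$.
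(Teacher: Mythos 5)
Your route is essentially the paper's: establish the claim for polynomials using the $s(\cdot)$--$l(\cdot)$ basis change encoded in relations (\ref{sl}), (\ref{ls}) and Lemma \ref{s}, then pass to a general $m\in\Phi(\k)$ by approximating with a net of polynomials and taking a limit of the truncations $(m_\alpha)_n$. The substantive difference is in how the limiting step is controlled, and here you supply something the paper leaves out. The paper's argument asserts, with no justification, that $\{(m_\lambda)_n\}$ is a \emph{bounded} net; this is exactly what is needed to upgrade the convergence of $(m_\lambda)_n m_0\Omega=\rho_{m_0}((m_\lambda)_n\Omega)$ on the dense set $\Phi(\k)\Omega$ to strong convergence of the operators. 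That boundedness is not automatic, since truncating a free semicircular polynomial to its degree-$\le n$ component has no a priori operator-norm control, and you correctly identify the Haagerup-type inequality for free semicirculars as the missing ingredient: together with Kaplansky density it yields $\|(m_\alpha)_n\|\le C_n\|P_n(m_\alpha\Omega)\|_2\le C_n\|m\|$. You then extract a WOT cluster point via Banach--Alaoglu rather than arguing strong convergence directly as the paper does, but once the uniform bound is in hand either variant closes the argument. So the decomposition is the same, but your version supplies the estimate the paper's proof takes for granted.
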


\begin{proof}
Any $s(f_1)s(f_2)\cdots s(f_n)\Omega$ can be expanded as linear combination of $l(f_{i_1})l(f_{i_2})\cdots l(f_{i_l})\Omega$ with ${i_1, i_2\cdots i_l}\subseteq \{1,2 \cdots n\}$ and vice versa for  $l(g_1)l(g_2)\cdots l(g_n)\Omega$. This implies that the lemma holds true for elements of the form $m=s(f_1)s(f_2)\cdots s(f_n)$, and for their linear combinations. Since $$\Phi(\k)=\{s(f_1)s(f_2)\cdots s(f_n): f_1, f_2 \cdots f_n \in L^2((0,\infty),\k), n\in \N\}''$$ to prove the remaining assertions, it is enough if we prove, for any net $\{m_\lambda:\lambda\in \Lambda\}$ satisfying $P_nm_\lambda \Omega=(m_\lambda)_n \Omega$ and $m_\lambda \rightarrow m$ strongly, it follows that $(m_\lambda)_n$ also converges strongly to some $m_n \in \Phi(\k)$ and $P_nm\Omega=m_n \Omega$.

Clearly $(m_\lambda)_n\Omega$ converges to $P_nm\Omega$. Consequently for any arbitrary $m_0 \in \Phi(\k)$ $m_\lambda^nm_0\Omega=\rho_{m_0}((m_\lambda)_n\Omega)$ is convergent. This means, since $\{(m_\lambda)_n: \lambda \in \Lambda\}$ is a bounded net,  it is strongly convergent. Finally if $(m_\lambda)_n \rightarrow m_n \in \Phi(\k)$ strongly, then clearly $P_nm\Omega=m_n \Omega$.   \end{proof}

We can replace $\Phi(\k)$ by its commutant, $s(.)$ by the right multiplication $\rho_{s(.)}$ and by exactly imitating the proof we can arrive at the following Corollary.

\begin{cor}\label{pnm'}
For every $m' \in \Phi(\k)'$ there exists an $m_n'\in \Phi(\k)'$ such that $P_n(m'\Omega) =m_n'\Omega$.
\end{cor}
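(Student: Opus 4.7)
The plan is to mirror the proof of Lemma~\ref{pnm} line by line, exchanging left and right multiplication. Recall from the discussion preceding Lemma~\ref{rho} that for $m_0\in\Phi(\k)$ the right-multiplication operator $\rho_{m_0}=Jm_0^*J$ lies in $\Phi(\k)'$. Since each $s(f)$ is self-adjoint, $\rho_{s(f)}=Js(f)J$ and, using $J\Omega=\Omega$,
\[\rho_{s(f_1)}\rho_{s(f_2)}\cdots\rho_{s(f_k)}\,\Omega=Js(f_1)s(f_2)\cdots s(f_k)\Omega=s(f_k)\cdots s(f_1)\Omega.\]
Since $\Omega$ is cyclic and separating, $\Phi(\k)'=J\Phi(\k)J$ is generated as a von Neumann algebra by the family $\{\rho_{s(f)}:f\in L^2(\R_+;\k)\}$, and for every $*$-polynomial $m'$ in these generators the vector $m'\Omega$ lies in a finite-particle subspace of $H$.

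For such a polynomial $m'$, Lemma~\ref{s} combined with the conversion formulas~(\ref{sl})--(\ref{ls}) lets one write $P_n m'\Omega=m''\Omega$ where $m''\in\Phi(\k)$ is itself a $*$-polynomial of length $\leq n$ in the $s(f)$'s; setting $m'_n:=\rho_{(m'')^*}=Jm''J\in\Phi(\k)'$ then yields $m'_n\Omega=P_n m'\Omega$, exactly as in the first paragraph of the proof of Lemma~\ref{pnm}.

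For an arbitrary $m'\in\Phi(\k)'$ the passage to the limit is by approximation. Pick a net of polynomial elements $m'_\lambda\to m'$ strongly and write $(m'_\lambda)'_n\in\Phi(\k)'$ for the operators produced by the previous step. The vectors $(m'_\lambda)'_n\Omega=P_n m'_\lambda\Omega$ converge to $P_n m'\Omega$, and the commutation relation available in $\Phi(\k)'$ gives, for every $m_0\in\Phi(\k)$,
\[(m'_\lambda)'_n\, m_0\Omega=m_0(m'_\lambda)'_n\Omega\longrightarrow m_0 P_n m'\Omega,\]
so the net converges on the dense subspace $\Phi(\k)\Omega\subset H$. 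Norm-boundedness of $\{(m'_\lambda)'_n\}$ then upgrades this to strong convergence to an operator $m'_n\in\Phi(\k)'$, and evaluating at $\Omega$ gives $m'_n\Omega=P_n m'\Omega$.

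The main obstacle is precisely the one left tacit in Lemma~\ref{pnm}: establishing norm-boundedness of the approximating net $\{(m'_\lambda)'_n\}$, without which convergence on the dense subspace $\Phi(\k)\Omega$ would not extend to all of $H$. Since the argument is a symmetric copy of the one referenced there, the same device handles this point in the present setting.
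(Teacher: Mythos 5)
Your proof follows exactly the route the paper intends: the paper's own ``proof'' is the one-line remark that one should imitate Lemma~\ref{pnm} after replacing $\Phi(\k)$ by its commutant and $s(\cdot)$ by $\rho_{s(\cdot)}$, and your write-up is a faithful fleshing out of that. One small but genuine slip occurs in the polynomial step: since $\rho_a=Ja^*J$ and hence $\rho_a\Omega=a\Omega$, the choice $m'_n:=\rho_{(m'')^*}=Jm''J$ gives $m'_n\Omega=(m'')^*\Omega$, which agrees with $P_n m'\Omega=m''\Omega$ only when $m''$ is self-adjoint; the correct choice is $m'_n:=\rho_{m''}=J(m'')^*J$. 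With that fixed, the rest of your argument --- including your candid observation that the norm-boundedness of the approximating net is left tacit already in the paper's proof of Lemma~\ref{pnm}, so that the same caveat is inherited here --- is in line with the paper.
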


\begin{lem}\label{Tfinite}
 Let $\gamma$ be the free flow on $\Phi(\k)$. Then for every $\xi \in H^\gamma_t\Omega$ $P_n \xi \in H^\gamma_t\Omega$ for all $n\in \N$ and $t \geq 0$.
\end{lem}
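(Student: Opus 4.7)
My strategy is to approximate $\xi \in H^\gamma_t\Omega$ by vectors in $\Phi(\k)'\Omega$ and in $\Phi(\k)\Omega$ via Lemma \ref{EtMSt}, use the truncation statements of Lemma \ref{pnm} and Corollary \ref{pnm'} to keep these vectors in the correct subspaces after applying $P_n$, and extract a WOT-limit operator living in $H^\gamma_t$.

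Write $\xi = A\Omega$ with $A = TS_t \in H^\gamma_t$, where $T \in \gamma_t(\Phi(\k))' \cap \gamma'_t(\Phi(\k)')'$. Since $E^\gamma_t = [\Phi(\k)' S_t]$ (WOT closure) by Lemma \ref{EtMSt}, there is a net $m'_\alpha \in \Phi(\k)'$ with $m'_\alpha S_t \to A$ in WOT; applying both sides to $\Omega$ and using $S_t\Omega = \Omega$ gives $m'_\alpha\Omega \to \xi$ weakly in $H$. By Corollary \ref{pnm'}, $P_n(m'_\alpha\Omega) = (m'_\alpha)_n\Omega$ for some $(m'_\alpha)_n \in \Phi(\k)'$; weak continuity of $P_n$ then yields $(m'_\alpha)_n\Omega \to P_n\xi$ weakly. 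A symmetric argument using $E^{\gamma'}_t = [\Phi(\k) S_t]$ (noting $S^{\gamma'}_t = JS_tJ = S_t$, since $J$ commutes with $S_t$) together with Lemma \ref{pnm} produces $(m_\beta)_n \in \Phi(\k)$ with $(m_\beta)_n\Omega \to P_n\xi$ weakly.

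For fixed $m_0 \in \Phi(\k)$, commutativity of $(m'_\alpha)_n \in \Phi(\k)'$ with $\gamma_t(m_0) \in \Phi(\k)$ gives
\[
(m'_\alpha)_n S_t(m_0\Omega) = \gamma_t(m_0)(m'_\alpha)_n\Omega \longrightarrow \gamma_t(m_0) P_n\xi \quad\text{weakly}.
\]
Analogously, for $m'_0 \in \Phi(\k)'$, using $S_tm'_0 = \gamma'_t(m'_0)S_t$ together with commutativity of $(m_\beta)_n \in \Phi(\k)$ with $\gamma'_t(m'_0) \in \Phi(\k)'$,
\[
(m_\beta)_n S_t(m'_0\Omega) = \gamma'_t(m'_0)(m_\beta)_n\Omega \longrightarrow \gamma'_t(m'_0) P_n\xi \quad\text{weakly}.
\]
Extracting WOT-limits along suitable subnets identifies a single operator $A^{(n)}$ acting as $m_0\Omega \mapsto \gamma_t(m_0)P_n\xi$ on $\Phi(\k)\Omega$ and as $m'_0\Omega \mapsto \gamma'_t(m'_0)P_n\xi$ on $\Phi(\k)'\Omega$. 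By construction $A^{(n)}$ belongs to $E^\gamma_t \cap E^{\gamma'}_t = H^\gamma_t$, and $A^{(n)}\Omega = P_n\xi$, giving $P_n\xi \in H^\gamma_t\Omega$ as required.

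The main obstacle is \emph{boundedness}: neither $(m'_\alpha)_n$ nor $(m_\beta)_n$ is a priori norm-bounded, so WOT-convergence of the operator nets is not automatic from the weak convergence of their actions on $\Omega$. I would address this by first selecting bounded approximating nets (via Kaplansky density applied to $\gamma_t(\Phi(\k))'$, then descending to $\Phi(\k)'$ using the von Neumann module structure of $E^\gamma_t$ established in Theorem \ref{alev-module}), and then verifying from the explicit basis computation in the proof of Lemma \ref{pnm} that the truncation $(\cdot) \mapsto (\cdot)_n$ produces a uniform norm bound of the form $\|(m'_\alpha)_n\| \leq C_n \|m'_\alpha\|$. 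With such a bound in place, Banach--Alaoglu yields the required WOT-convergent subnet and delivers $A^{(n)}$.
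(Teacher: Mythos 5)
Your proposal follows exactly the same outline as the paper's proof: approximate $\xi$ along $\Phi(\k)'S_t$ and along $\Phi(\k)S_t$ using Lemma~\ref{EtMSt}, truncate via Corollary~\ref{pnm'} and Lemma~\ref{pnm}, and extract a limit operator to show $P_n\xi \in H_t^\gamma\Omega$. The paper uses strong operator convergence where you use weak; this is inessential since the relevant closures are of linear subspaces.

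The boundedness obstacle you flag is real, and you deserve credit for noticing it; the paper itself does not address it directly but simply asserts in Lemma~\ref{pnm} that the truncated net is bounded. Your proposal to fix it via a Haagerup-type estimate $\|m_n\|\leq C_n\|m\|$ (where $m_n\Omega=P_nm\Omega$) is in the right spirit but is left unverified, so the gap is acknowledged rather than closed.

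There is, however, a second gap that your write-up glides past and that needs explicit treatment: in the last step you assert that ``extracting WOT-limits along suitable subnets identifies a \emph{single} operator $A^{(n)}$.'' This does not follow. The subnet of $\{(m'_\alpha)_n S_t\}$ converges (if bounded) to some $X'\in E_t^\gamma$, and the subnet of $\{(m_\beta)_n S_t\}$ converges to some $X\in E_t^{\gamma'}$; both satisfy $X\Omega = X'\Omega = P_n\xi$, but they are a priori \emph{different} operators. Indeed their actions on $\Phi(\k)\Omega$ are $X' m_0\Omega = \gamma_t(m_0)P_n\xi$ (left multiplication) versus $X m_0\Omega = \rho_{\gamma_t(m_0)}(P_n\xi)$ (right multiplication), and equality of these two for all $m_0$ is precisely what is needed to conclude $P_n\xi\in H^\gamma_t\Omega$ — so identifying the two limits is the crux of the lemma, not a formality. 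The identity $\gamma_t(m_0)\xi=\rho_{\gamma_t(m_0)}\xi$ holds for $\xi$ itself (via Lemma~\ref{rho}), but $P_n$ does not commute with $\gamma_t(m_0)$ or $\rho_{\gamma_t(m_0)}$, so it does not automatically pass to $P_n\xi$. To be fair, the paper's own proof in its current form also leaves this point implicit (and contains what look like typos equating the truncated limit with $TS_t$); a complete argument must supply this step explicitly.
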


\begin{proof}
We have $H^\gamma_t=[\Phi(\k)S_t]\cap [\Phi(\k)'S_t]$. So, for $TS_t\in H^\alpha_t$,  there exists nets $\{m_\lambda: \lambda\in \Lambda\}\subseteq \Phi(\k)$ $\{m_{\lambda'}': \lambda' \in \Lambda'\} \subseteq \Phi(\k)'$ satisfying $$m_\lambda S_t \rightarrow TS_t~\mbox{and}~ ~m'_{\lambda'} S_t \rightarrow TS_t ~~\mbox{strongly}.$$ Then  both $\{(m_\lambda)_n\Omega\}$ and $\{(m'_{\lambda'})_n\Omega\}$ converges to $T\Omega$. This implies for any $m_0 \in \Phi(\k)$, that $(m_{\lambda})_nS_t m_0\Omega=\rho_{\gamma_t(m_0)}((m_{\lambda})_n\Omega)$ converges to $\rho_{\gamma_t(m_0)}(T\Omega)$. But by
Lemma \ref{rho} we have  $$\rho_{\gamma_t(m_0)}(T\Omega)=T \gamma_t(m_0)\Omega=TS_t m_0\Omega.$$  So we conclude that $(m_{\lambda})_nS_t $ converges strongly to $TS_t$. Replacing appropriately with primes and left action of $\Phi(\k)'$, and by exactly the same reasoning, we get $(m'_{\lambda'})_nS_t $ converges strongly to $TS_t$. So $P_n \xi \in [\Phi(k)S_t]\cap [\Phi(k)'S_t]=H^\gamma_t\Omega$.   \end{proof}

\begin{prop}\label{freesuper}
Let $\gamma$ be free flow on $\Phi(\k)$ of any rank. Then $H^\gamma_t=\C S_t$ for all $t\geq 0$.
\end{prop}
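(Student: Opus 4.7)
The plan is, by Proposition \ref{HtOmega}, to reduce the claim to $H^\gamma_t\Omega = \C\Omega$. Take an arbitrary $A = TS_t \in H^\gamma_t = E^\gamma_t \cap E^{\gamma'}_t$ and set $\xi = T\Omega$. Lemma \ref{EtMSt} applied to $\gamma$ says $T \in \gamma_t(\Phi(\k))'$, and the same lemma applied to the complementary semigroup $\gamma'$ on $\Phi(\k)'$, combined with the identity $\gamma'_t(\Phi(\k)') = J\gamma_t(\Phi(\k))J$, forces $T \in J\gamma_t(\Phi(\k))'J$ as well. Writing $\m_1 = \gamma_t(\Phi(\k)) = \Phi_{[t}(\k)$, the operator $T$ therefore commutes simultaneously with $\m_1$ and with $J\m_1J$. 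Using the tracial right action $\xi\cdot x = Jx^*J\xi$ together with $Jx^*J\Omega = x\Omega$, I compute $x\xi = Tx\Omega$ and $\xi\cdot x = Jx^*JT\Omega = TJx^*J\Omega = Tx\Omega$ for every $x \in \m_1$, so $\xi$ is an $\m_1$-central vector in the $\m_1$-bimodule $L^2(\Phi(\k))$.

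The splitting $L^2(\R_+;\k) = L^2(0,t;\k)\oplus L^2(t,\infty;\k)$ realises $\Phi_t(\k)$ and $\m_1$ as freely independent \twoone factors generating $\Phi(\k)$, giving $\Phi(\k) = \Phi_t(\k) * \m_1$. I then appeal to the free-product lemma of the preceding section. While its statement only asserts triviality of the relative commutant, its proof in fact classifies every $\m_1$-central vector in $L^2(\Phi(\k))$: the $\m_1$-bimodule decomposition yields one copy of the standard bimodule $L^2(\m_1)$ plus countably many coarse bimodules $L^2(\m_1)\otimes L^2(\m_1)$; no coarse summand carries a nonzero central vector (such a vector would give a Hilbert--Schmidt element in the commutant of the \twoone factor $\m_1$, hence via a spectral projection a finite-rank projection in $\m_1'$, which is impossible), and the central subspace of $L^2(\m_1)$ is $Z(\m_1)\Omega = \C\Omega$. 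Consequently $\xi = \lambda\Omega$ for some $\lambda\in\C$.

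To finish, I use that $T$ commutes with every $\gamma_t(m)\in\m_1$: for any $m\in\Phi(\k)$,
\[ TS_tm\Omega = T\gamma_t(m)\Omega = \gamma_t(m)T\Omega = \lambda\gamma_t(m)\Omega = \lambda S_tm\Omega, \]
and density of $\Phi(\k)\Omega$ upgrades this to $TS_t = \lambda S_t$, giving $H^\gamma_t = \C S_t$. The main obstacle is the middle step: one must recognise that the proof of the free-product lemma really delivers the stronger classification of $\m_1$-central vectors in $L^2(\Phi(\k))$, rather than only the triviality of $\m_1'\cap\Phi(\k)$. Once that stronger statement is available, the two defining commutation relations for $T$, one coming from $E^\gamma_t$ and one from $E^{\gamma'}_t$, convert cleanly into bimodule centrality, and the rest of the argument is routine.
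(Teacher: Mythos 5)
Your proof is correct and takes a genuinely different route from the paper's. The paper's argument is combinatorial: Lemma \ref{Tfinite} shows each finite-particle truncation $P_n\xi$ of a vector $\xi\in H^\gamma_t\Omega$ stays inside $H^\gamma_t\Omega$, reducing to $\xi\in\Gamma_f^0$; one then expands $T\Omega$ uniquely in the (non-orthogonal) family $\{s(I)\Omega\}$ via Lemma \ref{s} and tests against right multiplication by $s(h_j)$, $h_j\in L^2((t,\infty);\k)$, to force every coefficient on a non-empty tuple to vanish. Your argument bypasses both the truncation lemma and all the $s(I)$/$l(I)$ bookkeeping: equation (\ref{HEJ}) lets you pick a single representative $T\in\gamma_t(\Phi(\k))'\cap J\gamma_t(\Phi(\k))'J$ (this is the right citation for that step, since a representative $T$ with $A=TS_t$ is not a priori unique, and (\ref{HEJ}) is exactly the statement that the intersection can be realised by a common operator), and then the two commutation relations turn $T\Omega$ into a $\gamma_t(\Phi(\k))$-central vector of $L^2(\Phi(\k))$. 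You are also right that the free-product lemma's proof establishes strictly more than its statement asserts: the bimodule decomposition together with the Hilbert--Schmidt argument classifies \emph{all} $\m_1$-central vectors in $L^2(\m_1*\m_2)$ as $\C\Omega$, not merely the ones coming from $\m_1'\cap(\m_1*\m_2)$, and this stronger form is what you need. With both ingredients in hand your proof is shorter and more conceptual. The only structural cost is exposition: the paper presents the identity $\gamma_t(\Phi(\k))'\cap\Phi(\k)=\C$ as derivable either from the free-product lemma or, independently, from Proposition \ref{freesuper}, and your route makes \ref{freesuper} logically downstream of the free-product lemma, so the two derivations collapse into one. That does not affect correctness.
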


\begin{proof} Clearly $S_t \in H^\gamma_t$ for all $t\geq 0$. For any $\xi \in  H^\gamma_t$ $P_n\xi \rightarrow \xi$ as $n \rightarrow \infty$. So, thanks to Lemma \ref{Tfinite}, it is enough if we prove that $H^\gamma_t\Omega \cap \Gamma^0(L^2((0,\infty), \k^\C)) = \C$. Let $TS_t \in H^\gamma_t$ such that $T\Omega\in  \Gamma^0(L^2(\R_+; \k^\C))$ with $T \in \gamma_t(\Phi(\k))'$ be any arbitrary element.  As before let  $\{e_i\}_{i\in \N}$ be an orthonormal basis for $L^2((0,\infty),\k)$ and also let $\{h_j\}_{j \in \N}\subseteq \{e_i\}_{i\in \N}$ be an orthonormal basis for $ L^2((t,\infty),\k)$.
By Lemma \ref{s}, there exists a unique expansion \begin{equation}\label{sexp}T\Omega= \sum_{I \in \i}\lambda(I)s(I)\Omega, ~~ \lambda(I) \in \C.\end{equation}

Note that $s(h_j) \in \alpha_t(\Phi(\k))$. So by lemma \ref{rho} we have \begin{equation}\label{rhos}
\rho_{s(h_j)}(T\Omega) = T s(h_j)\Omega,~~ \forall~j \in \N.
\end{equation}
Since $ T \in \gamma_t(\Phi(\k))'$, we have $s(h_j)T\Omega=Ts(h_j)\Omega$ for all $j \in \N$. 

Using equations \ref{sexp} and \ref{rhos} we have 
$$\sum_{I \in \i}\lambda(I)s(f_j)s(I)\Omega= \sum_{I \in \i}\lambda(I)s(I)s(f_j)\Omega ~ \forall ~j\in \N.$$ Since this expansion is unique, $\lambda(I)=0$ for any non-empty tuple not ending with $j$. But this is true for all $j$, so $\lambda(I)=0$ for any non-empty $I\in \i$. So $T\Omega =\lambda \Omega $ for some $\lambda \in \C$ and proof of the proposition is over.   \end{proof}

%\begin{rem}
 %If there exists an \en-semigroup $\sigma$ on $B(L^2(\m))$ extending both $\alpha$ and $\alpha'$ then it is easy to see that the super product system for $\alpha$ is the product system for $\sigma$. Thus, by Remark \ref{not product}, the Clifford and Even Clifford flows do not admit such an extension.
%\end{rem}

\section{Non cocycle conjugacy}

Even though both the Gauge dimension and coupling index are zero for Clifford flows and even Clifford flows of any index, we still show in this section that both these families contain mutually non cocycle conjugate \en-semigroups. We use the index defined through boundary representation by Alevras (see \cite{powers} and \cite{alev}) and analogues of the $C^*-$semiflows introduced by Remus Floricel in \cite{remus}.

We briefly recall the definition of boundary representation as defined by R. Powers and the index defined by Alveras, which is a conjugacy invariant.  Let $\alpha$ be an \en-semigroup on a II$_1$ factor $\m$  with generator $\delta$, whose domain we denote by $Dom(\delta)$. The generator $-d$  of the canonical unit $\{S_t:t\geq 0\}$ is a maximal skew-symmetric operator whose deficiency space can be identified with the Hilbert space $K=Dom(d^*)/Dom(d)$, with respect to the inner product $\ip{\cdot,\cdot}_0$ defined by $$\ip{[\xi],[\eta]}_0= \frac{1}{2}\ip{d^*\xi,\eta}+\frac{1}{2}\ip{\xi,d^*\eta}.$$ It is shown in \cite{powers} that elements $m\in Dom(\delta)$ leave both $Dom(d^*)$ and $Dom(d)$ invariant, and that the map $$\pi_\alpha(m): Dom(\delta)\mapsto B(K),~~\pi_\alpha(m)[\xi]=[m\xi]$$ is a norm continuous $*-$representation of $Dom(\delta)$. 

If $p \in \pi_\alpha(Dom(\delta))'$ is the largest projection such that the subrepresentation $\pi_\alpha(m)|_{pK}$ is normal, then by extending we get a normal representation of $\m$ on $pK$.  The Powers-Alevras index is defined as the $\m$-dimension of this representation, that is $Ind(\alpha)=\dim_\m(pK).$ 

For Clifford flows $p=1$ and the boundary representation extends to a normal representation of $\r$ on $K=Dom(d^*)/Dom(d)$ (see  \cite{powers} ). Since the Clifford flow of rank $n$ is a restriction of the corresponding CAR flow of rank $n$, and the antisymmetric Fock space is the GNS Hilbert space for $\r$, it follows from \cite{powers} that the Powers-Alevras index for Clifford flow of rank $n$ is also $n$. It is mentioned in \cite{alev} that the index for even Clifford flow of rank $1$ is $1$. We give a proof for general rank $n$.

For a fixed multiplicity $n$, denote the generators of the Clifford flow and even Clifford flow of rank $n$ by $\delta$ and $\delta_e$, and denote the generators of their respective canonical units by $-d$ and $-d_e$. We will denote the boundary representation of the Clifford flow by $\pi$ and of the even Clifford flow by $\pi_e$. For definiteness we write $\r\supset\r_e$ for the factor/subfactor pair given by the Hyperfinite II$_1$ factor generated by even products of $\{u(f):f \in L^2(\R_+;\k)\}$ embedded inside the Hyperfinite II$_1$ factor generated by all products of  $\{u(f):f \in L^2(\R_+;\k)\}$.

\begin{lem}\label{boundary inclusion lemma}
 There is a canonical inclusion of $\dom(d^*_e)/\dom(d_e)$ inside $\dom(d^*)/\dom(d)$ under which the restriction of $\pi$ to $\r_e$ has invariant subspace $\dom(d^*_e)/\dom(d_e)$.
\end{lem}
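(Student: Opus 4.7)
The plan is to exploit the parity grading of the antisymmetric Fock space. Since the shift $\{T_t\}$ on $L^2(\R_+;\k)$ preserves the particle number, the canonical unit $S_t$ of the Clifford flow preserves the decomposition $H = H^e \oplus H^o$ of $\Gamma_a(L^2(\R_+;\k^\C))$ into its even and odd summands. Consequently its generator $-d$ splits as a direct sum $-d = (-d_e)\oplus(-d_o)$, where $-d_e$ is precisely the generator of $\{S_t|_{H^e}\}$, i.e.\ of the canonical unit of the even Clifford flow. This immediately gives orthogonal decompositions $\dom(d) = \dom(d_e)\oplus\dom(d_o)$ and $\dom(d^*) = \dom(d^*_e)\oplus\dom(d^*_o)$.

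With these in hand, the natural assignment $[\xi]_e \mapsto [\xi]$ is a well-defined map from $\dom(d^*_e)/\dom(d_e)$ to $\dom(d^*)/\dom(d)$. Indeed, if $\xi\in\dom(d^*_e)\subseteq H^e$, then for any $\eta = \eta_e + \eta_o \in \dom(d)$ one has $\ip{\xi,d\eta} = \ip{\xi,d_e\eta_e}$ (the odd piece drops out since $\xi\in H^e$), which is bounded by $C\|\eta_e\| \le C\|\eta\|$; hence $\xi\in\dom(d^*)$ with $d^*\xi = d^*_e\xi \in H^e$. Injectivity follows because $\dom(d)\cap H^e = \dom(d_e)$ by the direct-sum decomposition, and the map is an isometry for the form $\ip{[\xi],[\eta]}_0 = \tfrac12\ip{d^*\xi,\eta}+\tfrac12\ip{\xi,d^*\eta}$ because $d^*$ agrees with $d^*_e$ on $\dom(d^*_e)$.

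For the invariance claim, note that every element of $\r_e$ is a weak limit of even words $u(f_1)\cdots u(f_{2k})$, which commute with the parity grading; hence $m(H^e)\subseteq H^e$ for all $m\in\r_e$. Powers' result cited in the excerpt (\cite{powers}) ensures that every $m\in\dom(\delta)$ preserves $\dom(d)$ and $\dom(d^*)$. Combining these two facts: for $m\in\r_e\cap\dom(\delta)$ and $\xi\in\dom(d^*_e)$ we have $m\xi \in \dom(d^*)\cap H^e = \dom(d^*_e)$, and similarly $m\dom(d_e)\subseteq\dom(d_e)$. Thus $\pi_\alpha(m)$ leaves the image of $\dom(d^*_e)/\dom(d_e)$ invariant on the dense subalgebra $\r_e\cap\dom(\delta)$, and invariance for all of $\r_e$ follows by the normal extension of the boundary representation (which, as remarked after the definition, is global for Clifford flows since $p = 1$).

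The crux of the argument is the parity preservation of the shift, and from there everything is essentially bookkeeping on orthogonal direct sums. I do not anticipate a genuine technical obstacle; the only subtlety worth emphasising is that the two inner products $\ip{\cdot,\cdot}_0$ on $\dom(d^*_e)/\dom(d_e)$ and on $\dom(d^*)/\dom(d)$ are forced to coincide under the inclusion precisely because $d^*|_{\dom(d^*_e)} = d^*_e$, a consequence of the orthogonality of the parity splitting.
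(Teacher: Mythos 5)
Your proof is correct and follows exactly the same route as the paper's: both hinge on the parity preservation of the canonical unit, the resulting direct-sum splitting $d = d_e \oplus d_o$ of the generator, and the identity $\dom(d_e) = \dom(d) \cap H^e$ for well-definedness of the inclusion. The only difference is that you spell out the adjoint computation and the invariance argument that the paper dismisses as ``immediate,'' which is careful but adds nothing new.
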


\begin{proof}
  Since the canonical unit for the Clifford flow respects the decomposition $\Gamma_a(L^2(\R_+;\k))=H_o\oplus H_e$ into odd and even components, its generator splits into a direct sum $d=d_0\oplus d_e$, so there is a canonical inclusion $\dom(d_e^*)\to \dom(d^*)$. This well-defines an inclusion of quotient spaces because $\dom(d_e)=\dom(d)\cap H_e$, so if $\xi_1,\xi_2\in\dom(d_e^*)$ with $\xi_1-\xi_2\in \dom(d)$, then $\xi_1-\xi_2\in\dom(d_e)$. The rest of the Lemma is immediate.   \end{proof}

Note that if $V$ is the inclusion of Lemma \ref{boundary inclusion lemma} then $\pi_e(x)=V^*\pi(x)V,$ so $\pi_e$ extends to a normal representation of $\r_e$.

\begin{prop}\label{boundary index prop}
 For the even Clifford flow of multiplicity $n$ the Powers-Alevras index is $n$.
\end{prop}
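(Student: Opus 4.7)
The plan is to combine Lemma~\ref{boundary inclusion lemma} with the Jones index formula $[\r:\r_e]=2$ and an odd/even symmetry, reducing the computation of $\mathrm{Ind}(\beta^n)$ to the already-established value $\mathrm{Ind}(\alpha^n)=n$.

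First I will note that the Clifford canonical unit $S_t$ commutes with the Fermion parity operator $\Gamma$ on $\Gamma_a(L^2(\R_+,\k^\C))$, so its generator $d$ splits as $d=d_o\oplus d_e$ along $H=H_o\oplus H_e$, and the deficiency space splits accordingly as $K=K_o\oplus K_e$, with $K_e=\dom(d_e^*)/\dom(d_e)$. From the discussion recalled just before the proposition, $p=1$ for $\alpha^n$ and $\dim_{\r}(K)=n$, with $\pi$ extending to a normal representation of $\r$ on the whole of $K$. Lemma~\ref{boundary inclusion lemma} together with its remark then identifies $\pi_e$ as the restriction of this normal $\pi$ to $\r_e$ acting on the invariant subspace $K_e$; in particular $\pi_e$ is already normal on all of $K_e$, so the corresponding projection satisfies $p_e=1$ and $\mathrm{Ind}(\beta^n)=\dim_{\r_e}(K_e)$.

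Next I will compute $\dim_{\r_e}(K_e)$. The Bogoliubov automorphism $\sigma\colon u(f)\mapsto -u(f)$ is an outer $\mathbb{Z}/2$-action on $\r$ with fixed-point algebra $\r_e$, so $[\r:\r_e]=2$, and multiplicativity of Murray--von Neumann dimension under restriction of scalars gives $\dim_{\r_e}(K)=2n$; this splits as $\dim_{\r_e}(K_o)+\dim_{\r_e}(K_e)$, so it suffices to show the summands are equal. For this I will pick a real unit vector $f$ in the domain of the shift generator $T$ and set $v:=\sqrt{2}\,u(f)$: this is a self-adjoint unitary in $\dom(\delta)$ satisfying $v^2=1$ and lying in the odd part of $\r$. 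Hence $\pi(v)$ is a self-adjoint unitary on $K$ which swaps $K_o$ and $K_e$, and the identity $vx=\mathrm{Ad}(v)(x)\,v$ for $x\in\r_e$ exhibits $\pi(v)|_{K_e}$ as an isomorphism from the $\r_e$-module $K_e$ onto the $\r_e$-module $K_o$ with its action twisted by the automorphism $\mathrm{Ad}(v)|_{\r_e}$ of $\r_e$. Since the Murray--von Neumann dimension of a Hilbert module over a II$_1$ factor is invariant under such automorphism twists (the commutant is unchanged and the trace on a factor is unique), $\dim_{\r_e}(K_e)=\dim_{\r_e}(K_o)$, and each must therefore equal $n$.

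The only mildly delicate ingredients are the outerness of $\sigma$, from which $[\r:\r_e]=2$ follows (this is classical, and in any case can be verified directly by exhibiting $\{1,v\}$ as a Pimsner--Popa basis), and the twist-invariance of the Jones dimension; I do not anticipate any real obstacle.
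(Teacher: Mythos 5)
Your proof is correct, and it takes a genuinely different route from the paper's. The paper proves $\dim_{\r_e}(K_e)=n$ by explicitly exhibiting $n$ pairwise orthogonal standard $\r_e$-submodules of $K_e$: it picks $f\in\dom(d)$ and the deficiency vectors $[e(j)]$ of the rank-$n$ Clifford flow, shows that $V\pi_e(\r_e)[f\wedge e(j)]=\pi(\r)[e(j)]\cap V(\dom(d_e^*)/\dom(d_e))$, and then verifies that each $[f\wedge e(j)]$ is cyclic and separating for the corresponding submodule. You instead compute $\dim_{\r_e}(K_e)$ abstractly: the index $[\r:\r_e]=2$ for the fixed-point subfactor of the (outer) parity automorphism gives $\dim_{\r_e}(K)=2n$ by multiplicativity of Murray--von Neumann dimension, and the odd self-adjoint unitary $v=\sqrt{2}\,u(f)$ implements a twist-equivalence $K_e\cong K_o$, forcing the two summands to split evenly. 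Both arguments hinge on the same two inputs (Lemma~\ref{boundary inclusion lemma} with the observation that $\pi_e = V^*\pi(\cdot)V$ is normal, and the already-known value $\mathrm{Ind}(\alpha^n)=n$). The trade-off is that your argument is shorter and more conceptual but imports subfactor machinery — the Jones index of an outer cyclic-group fixed-point algebra, multiplicativity of $\dim_N$ under finite-index restriction, and twist-invariance of module dimension — while the paper's argument is longer but elementary and self-contained, producing an explicit direct-sum decomposition of $K_e$ into $n$ standard modules. Two minor points worth flagging if you wrote this up: you should make explicit that $v\in\dom(\delta_e)$ is \emph{not} needed (only $v\in\dom(\delta)$, since $v$ is odd and you are using $\pi$, not $\pi_e$, to implement the swap), and you should note that $\mathrm{Ad}(v)$ does preserve $\r_e$ (since conjugation by an odd element preserves parity), so the twist is by a genuine automorphism of $\r_e$.
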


\begin{proof}
 Pick an orthonormal basis $e_1,\ldots,e_n$ for $\k$ and write $e(j)=\sqrt{2}e^{-x}\otimes e_j$ for each $j=1,\ldots,n$.
 Recall from \cite{powers} and \cite{alev} that the boundary representation of the Clifford flow decomposes into an orthogonal sum of $n$ standard $\r$-modules, each with an $[e(j)]$ as its cyclic and separating vector. 
 
 Pick a unit vector $f\in\dom(d)$, then under the inclusion of Lemma \ref{boundary inclusion lemma}
 \begin{equation}\label{crucial equation} V\pi_e(\r_e)[f\wedge e(j)]=\pi(\r)[e(j)]\cap V\dom(d^*_e)/\dom(d_e)\end{equation}
 for each $j=1,\ldots,n$. Indeed, the left hand side is clearly contained in the right, whereas
 $$\pi(u(f_1)\cdots u(f_{2n-1}))[e(j)]=\pi(u(f_1)\cdots u(f_{2n-1})u(f))[f\wedge e(j)],$$
 which gives the reverse inclusion. It follows from (\ref{crucial equation}) that the $[f\wedge e(j)]$ generate pairwise orthogonal $\r_e$-modules and these span $\dom(d_e^*)/\dom(d_e)$. Finally,  if  $$\pi_e(x)[f\wedge e(j)]=\pi_e(y)[f\wedge e(j)],$$ for some $x,y\in\r_e$ then $$\pi(x u(f))[e(j)]=\pi(y u(f))[e(j)].$$ So we have $x u(f)=y u(f)$ by the separating property of $[e(j)]$. It follows that $x=y$ and hence $\dom(d^*_e)/\dom(d_e)$ decomposes into $n$ standard $\r_e$-modules.   \end{proof}

For each $t\geq0$ let $\semiflowalg_\alpha(t):=\alpha_t(\m)'\cap\m$. Since these algebras form an increasing filtration, we follow \cite{remus}, and define the inductive limit $C^*$-algebra $\mathcal{A}_\alpha:=\overline{\bigcup_{t\geq0} \mathcal{A}_\alpha(t)}^{\norm{\cdot}}$, together with a semigroup of *-endomorphisms $\alpha|_{\semiflowalg_\alpha}$. This is called the $C^*$-semiflow corresponding to $\alpha$. Since this is a subalgebra of $\m$ there is a canonical trace on $\semiflowalg_\alpha$ which we denote by $\tau_{\alpha}$.

\begin{prop}\label{semiflow prop}
 If $\alpha$ and $\beta$ are cocycle conjugate \en-semigroups then their $C^*$-semiflows are conjugate. Moreover, the intertwining *-isomorphism $\gamma:\semiflowalg_\alpha\to\semiflowalg_\beta$ is implemented by a unitary $U_\gamma:L^2(\semiflowalg_\alpha,\tau_{\alpha})\to L^2(\semiflowalg_\beta,\tau_\beta)$ between the corresponding GNS spaces.
\end{prop}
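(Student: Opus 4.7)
The plan is to reduce to two special cases. By Definition \ref{conjugacy def}, cocycle conjugacy of $\alpha$ and $\beta$ factors as a cocycle perturbation $\alpha \rightsquigarrow \alpha^U$ followed by a conjugacy $\alpha^U \rightsquigarrow \beta$, so I will show that the $C^*$-semiflow (together with its GNS implementation) is preserved under each operation separately and then compose.

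For the conjugacy step, suppose $\theta : \m \to \n$ intertwines $\alpha^U$ and $\beta$. Since $\theta$ is a normal $*$-isomorphism of \twoone factors it preserves relative commutants, so $\theta(\semiflowalg_{\alpha^U}(t)) = \theta(\alpha^U_t(\m)'\cap \m) = \beta_t(\n)'\cap\n = \semiflowalg_\beta(t)$ for all $t\geq 0$. Taking the norm closure of the union in $t$ yields a $*$-isomorphism $\semiflowalg_{\alpha^U}\to \semiflowalg_\beta$ which manifestly intertwines the two semiflows; uniqueness of the trace on a \twoone factor forces $\tau_\beta\circ\theta = \tau_{\alpha^U}$, so $\theta$ extends by continuity to the desired unitary between GNS spaces.

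The main work is the cocycle step. Let $U=(U_t)$ be an $\alpha$-cocycle, so $\beta = \alpha^U$. Since $U_t\in\m$, we have
\[ \semiflowalg_\beta(t) = \beta_t(\m)'\cap\m = U_t\bigl(\alpha_t(\m)'\cap \m\bigr)U_t^* = U_t\,\semiflowalg_\alpha(t)\,U_t^*, \]
and the map $\gamma_t := \mathrm{Ad}_{U_t}$ is a trace-preserving $*$-isomorphism $\semiflowalg_\alpha(t)\to\semiflowalg_\beta(t)$. The crux, which I expect to be the key step, is showing these local maps are compatible as $t$ varies, so that they glue into a single map on $\bigcup_t \semiflowalg_\alpha(t)$. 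For $s\leq t$ and $x\in\semiflowalg_\alpha(s)\subseteq\semiflowalg_\alpha(t)$, the cocycle identity gives $U_t^*U_s = \alpha_s(U_{t-s})^*$, and because $x\in\alpha_s(\m)'$ commutes with $\alpha_s(U_{t-s})$ one gets $U_sxU_s^* = U_txU_t^*$. Hence $\gamma_0 := \bigcup_t \gamma_t$ is well-defined on $\bigcup_t\semiflowalg_\alpha(t)$, isometric as a $*$-homomorphism between $C^*$-algebras, and therefore extends by continuity to a $*$-isomorphism $\gamma:\semiflowalg_\alpha\to\semiflowalg_\beta$.

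To see that $\gamma$ intertwines the semigroups, fix $x\in\semiflowalg_\alpha(t)$ and $r\geq 0$; then $\alpha_r(x)\in\semiflowalg_\alpha(t+r)$, and another application of the cocycle identity $U_{t+r} = U_r\alpha_r(U_t)$ gives
\[ \gamma(\alpha_r(x)) = U_{t+r}\alpha_r(x)U_{t+r}^* = U_r\alpha_r(U_txU_t^*)U_r^* = \beta_r(\gamma(x)). \]
Finally, each $\gamma_t$ is implemented by a unitary in $\m$, so $\tau\circ\gamma_t = \tau$, and by density $\tau_\beta\circ\gamma = \tau_\alpha$; this trace invariance allows $\gamma$ to be extended to the promised unitary $U_\gamma:L^2(\semiflowalg_\alpha,\tau_\alpha)\to L^2(\semiflowalg_\beta,\tau_\beta)$. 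Composing with the unitary from the conjugacy step completes the proof.
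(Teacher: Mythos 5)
Your argument is correct and follows the same route as the paper: reduce to a cocycle perturbation, use the local maps $\mathrm{Ad}_{U_t}$, and pass to the inductive limit, with trace preservation giving the GNS unitary. The one place you go further is that you spell out the compatibility step (using the cocycle identity and adaptedness of $x\in\alpha_s(\m)'$ to get $U_sxU_s^*=U_txU_t^*$), which the paper simply attributes to Floricel's Proposition 1.3 rather than reproducing.
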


\begin{proof}
 Without loss of generality we may assume that $\alpha$ is a cocycle perturbation of $\beta$. If $\beta_t=Ad_{U_t}\circ \alpha_t$, for some $\alpha-$cocycle $\{U_t: t \geq 0\}$,  then the *-isomorpism $\gamma:\semiflowalg_\alpha\to\semiflowalg_\beta$ is constructed precisely as in \cite{remus}, Proposition 1.3, by taking the inductive limit of the maps $Ad_{U_t}|\semiflowalg_\alpha(t)\to\semiflowalg_\beta(t)$. Further $\gamma$ intertwines $\alpha|_{\semiflowalg_\alpha}$ and $\beta|_{\semiflowalg_\beta}$. Since each $Ad_{U_t}$ intertwines the corresponding induced traces, the inductive limit $\gamma$ satisfies $\tau_\beta\circ\gamma=\tau_\alpha$. The rest of the proposition follows immediately.   \end{proof}

 We call the triple $(\semiflowalg_\alpha,\alpha|_{\semiflowalg_\alpha},\tau_\alpha)$ the $\tau$-semiflow for $\alpha$. The above lemma shows that two cocycle conjugate \en-semigroups have isomorphic (in the obvious sense of the word) $\tau$-semiflows. For the free flows, the $\tau$-semiflow is trivial, $\semiflowalg_\alpha=\C1$. However, the following proposition shows that, when it is large enough, the triple $(\semiflowalg_\alpha,\alpha|_{\semiflowalg_\alpha},\tau_\alpha)$ says quite a lot about $\alpha$. 

\begin{prop}\label{restriction prop}
 Let $\alpha,\beta$ be \en-semigroups on the \twoone factor $\m$ and suppose that $\semiflowalg_\alpha$ is ultraweakly dense in $\m$. If the $\tau$-semiflow for $\alpha$ is isomorphic to the $\tau$-semiflow of $\beta$ then $\alpha$ is conjugate to a restriction of $\beta$.
\end{prop}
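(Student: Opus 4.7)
The plan is to promote the $C^*$-algebraic isomorphism $\gamma\colon\semiflowalg_\alpha\to\semiflowalg_\beta$ of $\tau$-semiflows to a normal $*$-isomorphism $\widetilde{\gamma}\colon\m\to\n$, where $\n\subseteq\m$ is the ultraweak closure of $\semiflowalg_\beta$, and then verify that $\widetilde{\gamma}$ intertwines $\alpha$ with $\beta|_\n$. Since $\semiflowalg_\alpha$ is ultraweakly dense in $\m$, its $L^2$-completion $L^2(\semiflowalg_\alpha,\tau_\alpha)$ coincides with $L^2(\m)$, so the unitary $U_\gamma$ from Proposition \ref{semiflow prop} becomes an isometry $L^2(\m)\to L^2(\m)$ whose range equals $L^2(\n,\tau|_\n)$.

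First I would construct $\widetilde{\gamma}$ as follows. Given $x\in\m$, use Kaplansky density to pick a net $(x_i)\subset\semiflowalg_\alpha$ with $\|x_i\|\leq\|x\|$ and $x_i\to x$ ultrastrongly. Because $\gamma$ is a unital $*$-isomorphism of $C^*$-algebras it is isometric, so $(\gamma(x_i))$ is norm-bounded in $\semiflowalg_\beta\subseteq\m$. By weak-$*$ compactness it has cluster points in $\m$; but $\gamma(x_i)\Omega=U_\gamma x_i\Omega\to U_\gamma x\Omega$ in $L^2(\m)$, so any cluster point $y$ satisfies $y\Omega=U_\gamma x\Omega$, and by the separating property of $\Omega$ such $y$ is unique. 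Hence $\gamma(x_i)\to y$ ultraweakly; set $\widetilde{\gamma}(x):=y\in\n$. The map $\widetilde{\gamma}$ is linear, $*$-preserving, multiplicative and trace-preserving by continuity of these operations on bounded ultrastrongly convergent nets, and it extends $\gamma$. By construction $\widetilde{\gamma}(x)\Omega=U_\gamma x\Omega$, which shows $\widetilde{\gamma}$ is injective (again by separation of $\Omega$) and normal (it is an $L^2$-isometry and trace-preserving), and that its image is exactly $\n$.

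Next I would check that $\n$ is $\beta$-invariant and that $\widetilde{\gamma}$ intertwines the two semigroups. For any $s,t\geq0$ one has $\beta_s(\semiflowalg_\beta(t))\subseteq\beta_{s+t}(\m)'\cap\beta_s(\m)\subseteq\beta_{s+t}(\m)'\cap\m=\semiflowalg_\beta(s+t)$, so $\beta_s(\semiflowalg_\beta)\subseteq\semiflowalg_\beta$; by normality of $\beta_s$, $\beta_s(\n)\subseteq\n$. On $\semiflowalg_\alpha$ we know $\widetilde{\gamma}\circ\alpha_s=\beta_s\circ\widetilde{\gamma}$ by hypothesis. For general $x\in\m$, approximate $x$ as above by $(x_i)\subset\semiflowalg_\alpha$ and apply the normality of $\alpha_s$, $\beta_s$, and $\widetilde{\gamma}$ to pass to the limit on both sides. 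This establishes $\widetilde{\gamma}\circ\alpha_s=\beta_s\circ\widetilde{\gamma}$ on all of $\m$, so $\widetilde{\gamma}$ realises $\alpha$ as conjugate to the restriction $\beta|_\n$.

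The main obstacle is verifying that the extension $\widetilde{\gamma}$ really is a normal $*$-isomorphism onto $\n$; this rests on the uniform boundedness of the approximating nets (provided by $\gamma$ being $C^*$-isometric), the separating property of $\Omega$ (to pin down weak-$*$ limits), and the coincidence $L^2(\semiflowalg_\alpha,\tau_\alpha)=L^2(\m)$ coming from the density hypothesis. Once these ingredients are assembled, the intertwining identity is just the standard normality-closure argument applied to the given identity on the dense $*$-subalgebra.
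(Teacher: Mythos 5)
Your argument is correct and follows the same line as the paper's: after identifying $L^2(\semiflowalg_\alpha,\tau_\alpha)$ with $L^2(\m)$ via the density hypothesis, the paper simply restricts the spatial isomorphism $\mathrm{Ad}_{U_\gamma}$ to $\m$ to obtain $\widetilde{\gamma}$, whereas you reconstruct the same normal extension explicitly via Kaplansky density and the separating vector $\Omega$. The one spot to tighten is the phrase ``multiplicative \dots\ by continuity of these operations on bounded ultrastrongly convergent nets'': you had established only ultraweak convergence of $(\gamma(x_i))$, but the needed upgrade is immediate and should be stated, since the net is bounded and $\gamma(x_i)y\Omega=\rho_y(\gamma(x_i)\Omega)\to\rho_y(U_\gamma x\Omega)=\widetilde{\gamma}(x)y\Omega$ for every $y\in\semiflowalg_\beta$, giving strong (hence ultrastrong) convergence on the dense set $\semiflowalg_\beta\Omega$.
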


\begin{proof}
 The isomorphism between the semiflows is implemented by a unitary $U_\gamma$ between the respective GNS spaces, as in Proposition \ref{semiflow prop}. Since $\semiflowalg_\alpha$ is ultraweakly dense in $\m$ we have $$L^2(\m,\tau)=L^2(\semiflowalg_\alpha,\tau_\alpha).$$ Now we see that $Ad_{U_\gamma}$ induces an injective $*-$homomorphism $\widetilde{\gamma}:\m\to B(L^2(\semiflowalg_\beta,\tau_\beta))$. It is clear that $\widetilde{\gamma}$ intertwines $\alpha$ with the restriction of $\beta$ to the ultraweak closure of $\semiflowalg_\beta$, as required.   \end{proof}

\begin{cor}\label{density cor}
 Let $\alpha$ and $\beta$ be \en-semigroups on the \twoone factor $\m$ such that both $\semiflowalg_\alpha$ and $\semiflowalg_\beta$ are ultraweakly dense in $\m$. Then $\alpha$ is cocycle conjugate to $\beta$ if and only if $\alpha$ is conjugate to $\beta$.
\end{cor}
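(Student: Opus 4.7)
The plan is to combine Propositions \ref{semiflow prop} and \ref{restriction prop} in sequence, with the density hypothesis on $\semiflowalg_\beta$ handling the one remaining subtlety. The ``only if'' direction is of course the content of the corollary; the ``if'' direction is trivial (a conjugacy is a cocycle conjugacy with trivial cocycle $U_t\equiv 1$), so I will focus on showing that cocycle conjugacy implies conjugacy.

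Assume $\alpha$ and $\beta$ are cocycle conjugate. First I apply Proposition \ref{semiflow prop} to produce a conjugacy $\gamma:\semiflowalg_\alpha\to\semiflowalg_\beta$ of the $\tau$-semiflows, implemented by a unitary $U_\gamma:L^2(\semiflowalg_\alpha,\tau_\alpha)\to L^2(\semiflowalg_\beta,\tau_\beta)$ between the GNS spaces. Next I feed this into Proposition \ref{restriction prop}, which uses the density of $\semiflowalg_\alpha$ in $\m$ to extend $\gamma$ to an injective $*$-homomorphism $\widetilde\gamma:\m\to B(L^2(\semiflowalg_\beta,\tau_\beta))$ intertwining $\alpha$ with the restriction of $\beta$ to the ultraweak closure of $\semiflowalg_\beta$ inside $B(L^2(\semiflowalg_\beta,\tau_\beta))$.

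The final step is to upgrade ``restriction of $\beta$'' to $\beta$ itself. Here the hypothesis that $\semiflowalg_\beta$ is ultraweakly dense in $\m$ is essential: it identifies $L^2(\semiflowalg_\beta,\tau_\beta)$ canonically with $L^2(\m,\tau)$, and under this identification the ultraweak closure of $\semiflowalg_\beta$ in $B(L^2(\semiflowalg_\beta,\tau_\beta))$ is all of $\m$ acting standardly. Consequently $\widetilde\gamma$ is a $*$-isomorphism from $\m$ into $\m$ intertwining $\alpha$ and $\beta$. To see that it is in fact a $*$-isomorphism \emph{onto} $\m$, one notes that $\widetilde\gamma(\m)$ contains the ultraweakly dense set $\gamma(\semiflowalg_\alpha)=\semiflowalg_\beta$, and since $\widetilde\gamma$ is normal (being induced by conjugation by a unitary on the standard form) its image is ultraweakly closed; hence $\widetilde\gamma(\m)=\m$. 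This yields a genuine conjugacy between $\alpha$ and $\beta$.

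The only potential obstacle is the verification that $\widetilde\gamma$ is ultraweakly continuous and hence has ultraweakly closed image — but since $\widetilde\gamma=\mathrm{Ad}_{U_\gamma}$ restricted to $\m$ (viewed through the identification above), this is automatic. Everything else is bookkeeping around the two GNS identifications provided by the density assumptions on $\semiflowalg_\alpha$ and $\semiflowalg_\beta$.
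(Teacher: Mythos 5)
Your proof is correct and follows the same route as the paper, which simply says the corollary ``follows from the proof of Proposition~\ref{restriction prop}'': you have unfolded that remark, correctly identifying that the density of $\semiflowalg_\beta$ is what upgrades ``conjugate to a restriction of $\beta$'' to ``conjugate to $\beta$'' by forcing $\widetilde\gamma(\m)=\m$ (via $\widetilde\gamma=\mathrm{Ad}_{U_\gamma}$ being normal and containing the ultraweakly dense $\semiflowalg_\beta$ in its image).
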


\begin{proof}
 If $\alpha$ and $\beta$ are cocycle conjugate then their $\tau$-semiflows are isomorphic. Hence it follows from the proof of Proposition \ref{restriction prop} that $\alpha$ is conjugate to $\beta$.   \end{proof}

\begin{thm}\label{awesome} Even Clifford flows with different rank are not cocycle conjugate. Furthermore Clifford flows with different rank are not cocycle conjugate. \end{thm}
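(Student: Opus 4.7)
The approach is to reduce cocycle conjugacy to ordinary conjugacy in each case and then invoke the Powers--Alevras index from Proposition~\ref{boundary index prop}, which is only known to be a \emph{conjugacy} invariant. For the even Clifford flows this reduction is direct via Corollary~\ref{density cor}; for the Clifford flows it will be routed through the $\tau$-semiflow, which in both cases lives naturally on the even Clifford algebra $\r_e$.

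First I would show, for the even Clifford flow $\beta^n$, that $\semiflowalg_{\beta^n}$ is ultraweakly dense in $\r_e$. The relative commutant $\r_t=\alpha^n_t(\r)'\cap \r$, generated by the even polynomials in $\{u(f):\supp f\subset[0,t]\}$, is contained in $\r_e$ and commutes with $\beta^n_t(\r_e)\subset \alpha^n_t(\r)$, so $\r_t\subset \beta^n_t(\r_e)'\cap \r_e = \semiflowalg_{\beta^n}(t)$. Since $\bigcup_{t\geq 0}\r_t$ ultraweakly generates $\r_e$ by definition of the even Clifford algebra, so does $\semiflowalg_{\beta^n}$. Corollary~\ref{density cor} then promotes any cocycle conjugacy between $\beta^n$ and $\beta^m$ to an ordinary conjugacy, and Proposition~\ref{boundary index prop} identifies the Powers--Alevras index of $\beta^n$ as $n$, forcing $n=m$.

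To lift this to the Clifford flows, suppose $\alpha^n$ and $\alpha^m$ are cocycle conjugate. Proposition~\ref{semiflow prop} yields a trace-preserving *-isomorphism $\gamma:\semiflowalg_{\alpha^n}\to\semiflowalg_{\alpha^m}$ intertwining the restricted semigroups and implemented by a unitary $U_\gamma$ between the GNS spaces. Since $\semiflowalg_{\alpha^n}$ is generated by $\bigcup_t\r_t$, its ultraweak closure is $\r_e$, and the restriction of $\alpha^n$ to this closure is exactly the even Clifford flow $\beta^n$ (and similarly for $m$). Conjugation by $U_\gamma$ then provides a normal *-isomorphism $\widetilde{\gamma}:\r_e\to\r_e$ extending $\gamma$, and ultraweak continuity of $\beta^n,\beta^m$ together with density of $\semiflowalg_{\alpha^n}$ in $\r_e$ upgrades the intertwining to $\widetilde{\gamma}\circ\beta^n_t = \beta^m_t\circ\widetilde{\gamma}$ for all $t\geq 0$. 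Hence $\beta^n$ and $\beta^m$ are conjugate, so the first paragraph gives $n=m$.

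\emph{The main obstacle} is the final extension step: verifying that conjugation by the GNS unitary $U_\gamma$ genuinely produces a conjugacy of \en-semigroups on the ultraweak closures in the sense of Definition~\ref{conjugacy def}. The ingredients --- trace preservation of $\gamma$, weak-$*$ continuity of the semigroups, and ultraweak density of the semiflow algebra in $\r_e$ --- are all available, but the limiting argument passing the intertwining from the $C^*$-algebraic to the von Neumann level must be made precise.
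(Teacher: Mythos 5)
Your proposal is correct and follows essentially the same route as the paper: reduce cocycle conjugacy to conjugacy via ultraweak density of the $C^*$-semiflow algebra and Corollary~\ref{density cor}, then separate the even Clifford flows using the Powers--Alevras index from Proposition~\ref{boundary index prop}, and finally lift the conclusion to the Clifford flows by passing through the $\tau$-semiflow.

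One remark on a small difference. The paper's proof of the Clifford-flow half asserts that the $\tau$-semiflows of $\alpha^n$ and $\beta^n$ ``are the same,'' which literally requires the equality $\alpha^n_t(\r)'\cap\r = \beta^n_t(\r_e)'\cap\r_e$ for all $t$; your argument circumvents this by observing only that $\semiflowalg_{\alpha^n}(t)=\r_t$ has ultraweak closure $\r_e$, that the restriction of $\alpha^n$ to $\r_e$ is $\beta^n$, and then pushing the semiflow isomorphism through the GNS unitary directly --- which is a cleaner way to get the same conclusion. As for the ``main obstacle'' you flag at the end: it is handled in Proposition~\ref{restriction prop}. The only adaptation needed for the Clifford flows is to replace the ambient factor $\m=\r$ by the ultraweak closure $\r_e$ of $\semiflowalg_{\alpha^n}$; since $L^2(\semiflowalg_{\alpha^n},\tau_{\alpha^n})=L^2(\r_e,\tau)$ by density, the GNS unitary $U_\gamma$ acts on $L^2(\r_e)$ and $Ad_{U_\gamma}$ gives a normal $*$-isomorphism of $\r_e$ which, by ultraweak continuity of $\beta^n$ and $\beta^m$ and density of $\semiflowalg_{\alpha^n}$, intertwines the restricted semigroups. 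This closes the gap you identify, and the limiting argument is exactly the one already made precise in the paper.
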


\begin{proof} 
 Pick a  real Hilbert space $\k$ of dimension $n$ and construct the corresponding even Clifford flow $\beta^n$. If $f$ and $g$ are functions in $L^2([0,t];\k)$ then $u(f)u(g)$ belongs to $\beta^n_t(\m)'\cap\m$, hence to $\semiflowalg_{\beta^n}$. But the compactly supported functions are dense in $L^2(\R_+;\k)$, so these elements generate the even Clifford algebra in the strong topology. Thus if $\beta^n$ and $\beta^m$ are cocycle conjugate even Clifford flows, then they satisfy the conditions of Corollary \ref{density cor}, so are conjugate. In particular this implies they have the same boundary index, as defined in \cite{alev}, and hence by Proposition \ref{boundary index prop} they have the same rank, that is $n=m$.
 
If Clifford flows of rank $n$ and $m$ are cocycle conjugate, then their $\tau$-semiflows are isomorphic. It is easily seen that the Clifford flow of rank $n$ (respectively $m$) has the same $\tau$-semiflow as the {even Clifford flow} of rank $n$ (respectively $m$). Since the $\tau$-semiflows are isomorphic, the corresponding even Clifford flows are conjugate, hence by the first part of the theorem $n=m$.   \end{proof}

\begin{rems} 1. The result in Theorem \ref{awesome} is in sharp contrast to the case of reversible flows on the hyperfinite \twoone factor arising from second quantization of bilateral shifts. These are all cocycle conjugate by a result of Kawahigashi (\cite{kawa}).

\noindent 2. If a Clifford flow is cocycle conjugate to an even Clifford flow then they have isomorphic $\tau$-flows, hence it follows that they have the same rank. However, we cannot yet show that the Clifford flow of rank $n$ is not cocycle conjugate to the even Clifford flow of rank $n$.
\end{rems}

\noindent
\tiny{ACKNOWLEDGEMENTS.
The first named author is supported by the UKIERI Research Collaboration Network grant \emph{Quantum Probability, Noncommutative Geometry \& Quantum Information} and the EPSRC, UK. This joint work was initiated during the second named author's visit to Lancaster University, and the visit was supported by the same UKIERI Research Collaboration Network grant.}

\end{document}